\newcommand{\gNorm}[1]{{\left\|\kern-0.24ex\left|{ #1 } \right|\kern-0.24ex\right\|}}
\renewcommand{\bar}{\overline}
\renewcommand{\epsilon}{\varepsilon}
\newcommand{\B}{\mathcal{H}}
\newcommand{\D}{d}
\newcommand{\phg}{\textup{\normalfont phg}}
\renewcommand{\Re}{\mathrm{Re}}
\renewcommand{\setminus}{\!\smallsetminus\!}
\newcommand{\Defn}[1]{{\boldmath\it\bfseries #1}}
\DeclareMathOperator{\R}{R}
\DeclareMathOperator{\grad}{grad}
\DeclareMathOperator{\Hess}{Hess}
\DeclareMathOperator{\Div}{div}
\DeclareMathOperator{\tr}{tr}
\DeclareMathOperator{\supp}{supp}
\theoremstyle{plain}
\newtheorem{theorem}{Theorem}
\newtheorem{lemma}[theorem]{Lemma}
\newtheorem{proposition}[theorem]{Proposition}
\newtheorem{remark}[theorem]{Remark}
\newtheorem{assumption}{Assumption}
\numberwithin{equation}{section}
\title[Density of shear-free data]{Smoothly compactifiable shear-free hyperboloidal data is dense in the physical topology}
\author[Allen \& Stavrov Allen]{Paul T.~Allen and Iva Stavrov Allen}
\email{ptallen@lclark.edu, istavrov@lclark.edu}
\address{Department of Mathematical Sciences, Lewis \& Clark College}
\date{\today}
\begin{document}

\maketitle

\begin{abstract}
We show that any polyhomogeneous asymptotically hyperbolic constant-mean-curvature solution to the vacuum Einstein constraint equations can be approximated,  arbitrarily closely in H\"older norms determined by the physical metric, by shear-free smoothly conformally compact vacuum initial data.
\end{abstract}


\section*{Introduction}
In the study of asymptotically flat (or asymptotically simple) spacetimes,  
initial data corresponding to spacelike slices extending towards null infinity has asymptotically hyperbolic geometry.
Lars Andersson and Piotr Chru\'sciel, building on their work with Helmut Friedrich \cite{AnderssonChruscielFriedrich}, construct in  \cite{AnderssonChrusciel-Dissertationes}  a large number of constant-mean-curvature (CMC)  vacuum initial data sets  with asymptotically hyperbolic geometry using the conformal method of Yvonne Choquet-Bruhat, Andr\'e Lichnerowicz, and James York.
In the work \cite{AnderssonChrusciel-Dissertationes}, particular attention is paid to the regularity of solutions at the conformal boundary.
Data constructed in \cite{AnderssonChrusciel-Dissertationes} typically admits a $C^2$, but not $C^3$ conformal compactification.
In particular, they showed that data which is smooth in the interior ``physical'' manifold is typically polyhomogeneous, rather than smooth, at the conformal boundary.

In their detailed analysis \cite{AnderssonChrusciel-Obstructions}, Andersson and Chru\'sciel show  that initial data must satisfy the shear-free condition along the conformal boundary (see \S\ref{SF:Define}) in order for any resulting spacetime geometry to admit a $C^2$ conformal compactification.
This suggests that one might require the shear-free condition hold in order for a solution to the Einstein constraint equations to be ``admissible'' in the asymptotically hyperbolic setting.
Thus we refer to initial data satisfying the shear-free condition as \Defn{hyperboloidal}, distinguished among those solutions to the constraint equations having asymptotically hyperbolic geometry.
Our recent work  \cite{AHEM-Preliminary}, joint with  James Isenberg and John M.~Lee, contains a systematic study of CMC hyperboloidal initial data, including a parametrization of all such data in the ``weakly asymptotically hyperbolic'' setting (see also \cite{WAH-Preliminary}).

This is not, however, the end of the story.
Even if one restricts attention to shear-free data, the initial data constructed in \cite{AnderssonChrusciel-Dissertationes} and \cite{AHEM-Preliminary} may not be sufficiently regular at the conformal boundary to obtain a spacetime development admitting conformal compactification.
For example, the  existing evolution theorems of Helmut Friedrich \cite{Friedrich-ConformalFieldEquations}, \cite{Friedrich-StaticRadiative}, etc., all require more regularity of the conformal compactification.
(The regularity issue is not unrelated to the shear-free condition: Andersson, Chru\'sciel, and Friedrich show in \cite{AnderssonChruscielFriedrich} that initial data, constructed from smooth ``free data'' using the conformal method (see \S\ref{ConformalMethod}), with pure-trace extrinsic curvature is shear-free if and only if it is smoothly conformally compact.)

In addition to issues of regularity, one may be concerned about whether the collection of hyperboloidal data is sufficiently general for modeling a wide variety of physical situations.

Here we address these issues by showing that any polyhomogeneous asymptotically hyperbolic CMC solution to vacuum constraint equations can be approximated,  arbitrarily closely in H\"older norms determined by the physical (non-compactified) spatial metric, by hyperboloidal (i.e.~shear-free) vacuum initial data that is smoothly conformally compact.
In the case that the conformal boundary is a $2$-sphere, the work of \cite{Friedrich-ConformalFieldEquations} implies that the approximating data has a spacetime development admitting a smooth conformal infinity.

There are a number of ways in which one might interpret our result.
From the perspective of modeling isolated gravitational systems, it is an indication that some version of Bondi-Sachs-Penrose approach to using conformal compactness for studying asymptotically flat spacetimes is feasible for studying a large class of physical systems.
However, it is also an indication that the H\"older topology determined by the physical metric is insufficiently strong for studying the conformal boundary of asymptotically hyperbolic initial data sets.
(For example, it is observed in \cite{AnderssonChrusciel-Obstructions} that among the initial data constructed  in \cite{AnderssonChrusciel-Dissertationes} from smooth ``free data''  by means of the conformal method, the shear-free condition does not hold generically with respect to the $C^\infty$ topology determined by the conformally compactified metric.)
Indeed, it was the approximation result here that motivated several of the results in \cite{AHEM-Preliminary}, where continuity of the conformal method for construction of solutions to the constraint equations is established in a topology strong enough to detect the shear-free condition.

\section{Discussion of main result}

Here we present a discussion of the details needed in order to make precise our approximation result.
As we make use of several results from \cite{Lee-FredholmOperators}, \cite{WAH-Preliminary}, and \cite{AHEM-Preliminary}, we maintain conventions similar to the conventions in those works.

\subsection{Asymptotically hyperbolic initial data}
Let $M$ be the interior of a smooth three-dimensional compact manifold $\bar M$ having boundary $\partial M$.
We say that a smooth function $\rho\colon \bar M \to [0,\infty)$ is a  \Defn{defining function} if $\rho^{-1}(0) = \partial M$ and if $\D\rho\neq 0$ on $\partial M$.
A metric $g$ on $M$ is said to be \Defn{$C^k$ conformally compact} if $\bar g:= \rho^2 g$ extends to a metric of class $C^k$ on $\bar M$ for one, and hence all, smooth defining functions $\rho$.
A $C^2$ conformally compact metric $g$ is \Defn{asymptotically hyperbolic} if $|\D\rho|_{\bar g} =1$ along $\partial M$ for one, and hence all, smooth defining functions $\rho$.
The sectional curvatures of such metrics approach $-1$ as $\rho \to 0$; see \cite{WAH-Preliminary} for generalizations of this definition.

A vacuum initial data set $(g,K)$ consists of a Riemannian metric $g$ and symmetric covariant $2$-tensor $K$, both defined on $M$ and satisfying  the vacuum Einstein constraint equations
\begin{subequations}
\label{Constraints}

\begin{equation}
\label{HamiltonianConstraint}
R[g] -|K|^2_g + (\tr_g K)^2 =0,
\end{equation}

\begin{equation}
\label{MomentumConstraint}
\Div_g K-\D(\tr_g K) =0.
\end{equation}
\end{subequations}
It is convenient to introduce the notation $\tau = \tr_gK$ for the trace of $K$ and $\Sigma = K - \frac13\tau g$ for the traceless part of $K$.
We say that \Defn{$(g,K)$ is an asymptotically hyperbolic initial data set} if $g$ is asymptotically hyperbolic and if the tensor $\bar\Sigma = \rho\Sigma$ extends to a $C^1$ tensor field on $\bar M$.
Such a data set is said to be \Defn{smoothly conformally compact} if for any defining function $\rho$ the tensor fields $\bar g=\rho^2 g$ and $\bar\Sigma=\rho \Sigma$ extend smoothly to $\bar M$.
We note that there exist ``weakly asymptotically hyperbolic'' solutions to \eqref{Constraints}, satisfying less stringent regularity conditions; see \cite{AHEM-Preliminary}.

Asymptotically hyperbolic data sets may be viewed as intersecting future null infinity in the asymptotically flat spacetime containing a future development of the data set; we refer the reader to \cite{AHEM-Preliminary}, and the references therein, for a more detailed discussion of  asymptotically hyperbolic initial data sets and asymptotic flatness.

The formula for the change of scalar curvature under conformal deformation, together with \eqref{HamiltonianConstraint}, implies
\begin{equation}
\label{LichRho}
4\rho \Delta_{\bar g}\rho + (R[\bar g] - |\bar\Sigma|^2_{\bar g}) \rho^2 + 6\left( \frac{\tau^2}{9} - |\D\rho|^2_{\bar g}\right) =0,
\end{equation}
where our sign convention on the scalar Laplace operator is $\Delta_{\bar g} = \tr_{\bar g}\Hess_{\bar g}$.
Evaluating \eqref{LichRho} at $\rho =0$ we find that $\tau^2 = 9$ along $\partial M$.
Thus in the constant-mean-curvature (CMC) setting we have $\tau = \pm 3$, with the sign indicating whether the initial data intersects future or past null infinity (relative to the notion of ``future'' determined by $K$).
Henceforth we restrict attention to the CMC case and set $\tau = -3$, which (due to our sign convention for $K$) corresponds to future null infinity; see the discussion in \cite{AHEM-Preliminary}.
Note that when $\tau =-3$ the constraint equations \eqref{Constraints} reduce to
\begin{equation}
\label{CMC-constraints}
R[g]-|\Sigma|^2_g + 6 =0
\quad\text{ and }\quad
\Div_g\Sigma =0.
\end{equation}

\subsection{The shear-free condition}\label{SF:Define}
While any sufficiently regular solution to the Einstein constraint equations \eqref{Constraints} gives rise to some spacetime development thereof (see \cite{ChoquetBruhat-GRBook} and the references therein), it was shown in \cite{AnderssonChrusciel-Obstructions} that the development of an asymptotically hyperbolic initial data set admits a conformal compactification along future null infinity only if  the \Defn{shear-free condition}
\begin{equation}
\label{FirstShearFree}
\left[\Hess_{\bar g}\rho -\frac13(\Delta_{\bar g}\rho)\bar g -\bar\Sigma\right]_{\partial M} =0
\end{equation}
holds.
We say that an asymptotically hyperbolic initial data set is a \Defn{hyperboloidal initial data set} if \eqref{FirstShearFree} holds.

\subsection{The conformal method}
\label{ConformalMethod}
The existence of asymptotically hyperbolic initial data sets is addressed in \cite{AnderssonChruscielFriedrich} and \cite{AnderssonChrusciel-Dissertationes}.
The existence of hyperboloidal data is discussed in \cite{AHEM-Preliminary}.
All these works make use of the conformal method, which we now describe.

We first introduce the \Defn{conformal Killing operator} $\mathcal D_g$, which maps vector fields to trace-free symmetric covariant $2$-tensors by 
\begin{equation}
\label{DefineCK}
\mathcal D_g W = \frac12 \mathcal L_W g - \frac13( \Div_g W) g.
\end{equation}
The formal $L^2$ adjoint $\mathcal D_g^*$ is given by $\mathcal D_g^* T = -(\Div_g T)^\sharp$, and can be used to construct the self-adoint, elliptic operator $L_g := \mathcal D_g^* \mathcal D_g$, which is called the \Defn{vector Laplacian}.

In the CMC setting, with $\tau = -3$, the conformal method seeks a solution $(g,K)$ to \eqref{Constraints} of the form
\begin{subequations}
\label{Conformal}
\begin{align}
\label{Conformal-g}
g &= \phi^4 \lambda
\\
\label{Conformal-K}
K&= \phi^{-2}\left( \mu + \mathcal D_\lambda W\right) - \phi^4 \lambda,
\end{align}
\end{subequations}
for some Riemannian metric $\lambda$, symmetric covariant $2$-tensor field $\mu$, vector field $W$, and positive function $\phi$.
Replacing $g$ and $K$ in \eqref{Constraints} by the expressions in \eqref{Conformal}, we find that the constraints \eqref{Constraints} are satisfied if $W$ and $\phi$ satisfy the elliptic system
\begin{subequations}
\label{ConformalConstraints}
\begin{align}
\label{ConformalMomentumConstraint}
L_\lambda W &= -\Div_\lambda \mu
\\
\label{Lich}
\Delta_\lambda\phi &= \frac18 \R[\lambda] \phi - \frac18 |\mu + \mathcal D_\lambda W|^2_\lambda \phi^{-7} + \frac34 \phi^5.
\end{align}
\end{subequations}
Thus if $\lambda$ and $\mu$ are specified, it remains only to solve \eqref{ConformalConstraints} in order to obtain a solution to \eqref{Constraints}.
We make use of the nomenclature of \cite{AHEM-Preliminary} and refer to $(\lambda,\mu)$ as a \Defn{free data set}.

If $\lambda$ is an asymptotically hyperbolic metric on $M$, then $g = \phi^4 \lambda$ is an asymptotically hyperbolic metric provided $\phi \in C^2(\bar M)$ and $\phi =1$ along $\partial M$.
If $\phi$ satisfies these conditions, then metric $g$ and tensor $K$ given by \eqref{Conformal} form an asymptotically hyperbolic data set provided $\rho\mu$ and $\rho \mathcal D_\lambda W$ extend to $C^1$ tensor fields on $\bar M$.

We remark that the conformal method, as described above, does not necessarily yield hyperboidal data (i.e.,~data satisfying the shear-free condition).
However, with appropriately constructed free data, one can ensure that the resulting initial data does in fact satisfy the shear-free condition; see \cite{AHEM-Preliminary}.

\subsection{Polyhomogeneous data}\label{polyhomog:sec}
The two works \cite{AnderssonChruscielFriedrich} and \cite{AnderssonChrusciel-Dissertationes}, where large classes of asymptotically hyperbolic initial data are constructed, contain detailed analyses of the regularity of solutions at $\partial M$ and show the following: Even if the free data $\lambda$ and $\mu$ are smoothly conformally compact, the solutions $W$ and $\phi$ to \eqref{ConformalConstraints} need not give rise to smoothly conformally compactifiable fields $g$ and $K$.
Rather, the resulting metric $g$ and tensor field $K$ admit formal expansions at $\partial M$ given, in terms of an arbitrary smooth defining function $\rho$, by
\begin{subequations}
\label{PhysicalAsymptotics}
\begin{align}
\label{PhysicalAymptotics-g}
\bar g &\sim \bar g_0 + \sum_{i=0}^{\infty} \sum_{p=0}^{p_i} \rho^{s_i} (\log\rho)^{p} \bar g_{ip},
\\
\label{PhysicalAymptotics-Sigma}
\bar\Sigma &\sim \bar\Sigma_0 + \sum_{i=0}^{\infty}\sum_{q=0}^{q_i} \rho^{t_i} (\log\rho)^{q} \bar\Sigma_{iq},
\end{align}
\end{subequations}
where the barred terms are smooth tensor fields.
Tensor fields which admit such expansions are called ``polyhomogeneous.''
We remark that a number of closely-related definitions of polyhomogeneous tensor fields exist in the literature; see \S\ref{section:boundary} below for a precise definition of the notion of polyhomogenity used here.

The asymptotic expansions of the polyhomogeneous data constructed in \cite{AnderssonChrusciel-Dissertationes} take the form \eqref{PhysicalAsymptotics} with $\Re{(s_0)} >2$ and $\Re{(q_0)} >1$.
Thus, letting $C^k_\phg(\bar M)$ denote the collection of polyhomogeneous tensor fields on $M$ which extend to fields of class $C^k$ on $\bar M$, we have $\bar g\in C^2_\phg(\bar M)$ and $\bar\Sigma\in C^1_\phg(\bar M)$.
The polyhomogeneous hyperboloidal data sets constructed in \cite{AHEM-Preliminary} also have this regularity.

\subsection{The approximation theorem}

We now give a careful statement of our main result.

\begin{theorem}
\label{Density}
Suppose that $(g,K)$ is a polyhomogeneous asymptotically hyperbolic vacuum initial data set on $3$-manifold $M$.
Then there exists a family $(g_\epsilon, K_\epsilon)$ of solutions to the vacuum Einstein constraint equations \eqref{Constraints}, defined for sufficiently small $\epsilon >0$,  such that 
\begin{enumerate}
\item each initial data set is hyperboloidal, meaning that each $(M,g_\epsilon)$ is asymptotically hyperbolic, that $(g_\epsilon, K_\epsilon)$ each satisfy the constraint equations \eqref{Constraints}, and that  $(g_\epsilon, K_\epsilon)$ each satisfy the shear-free condition \eqref{FirstShearFree};

\item each initial data set in the family is  smoothly conformally compact, in the sense that $\bar g_\epsilon = \rho^2 g_\epsilon \in C^\infty(\bar M)$ and $\bar\Sigma_\epsilon = \rho(K_\epsilon + g_\epsilon) \in C^\infty(\bar M)$; and

\item we have $(g_\epsilon, K_\epsilon) \to (g,K)$ as $\epsilon \to 0$ in the $C^{k,\alpha}(M) \times C^{k,\alpha}(M)$ topology, for any $k$ and $\alpha$.
\end{enumerate}
\end{theorem}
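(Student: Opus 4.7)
My plan is to reinterpret $(g,K)$ as the output of the conformal method and then approximate by smooth, hyperboloidal free data. Begin by setting $(\lambda,\mu):=(g,\Sigma)$, so that $(\phi,W)=(1,0)$ solves \eqref{ConformalConstraints} trivially, and reconstructing via \eqref{Conformal} recovers exactly $(g,K)$. This free data is polyhomogeneous on $\bar M$ but generally neither smoothly conformally compact nor of a form that \cite{AHEM-Preliminary} identifies as producing shear-free initial data, so the work is to replace it with something that is.

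The core step is to construct smooth approximating free data $(\lambda_\epsilon,\mu_\epsilon)$ with two properties: first, $(\bar\lambda_\epsilon,\bar\mu_\epsilon)$ extends smoothly to $\bar M$; and second, $(\lambda_\epsilon,\mu_\epsilon)$ lies in the subclass of CMC free data shown in \cite{AHEM-Preliminary} to yield hyperboloidal solutions. For smoothness, I would use the expansions \eqref{PhysicalAsymptotics} to excise the polyhomogeneous tails of $\bar g$ and $\bar \Sigma$ via $\rho$-cutoffs $\chi_\epsilon$ supported in a collar $\{\rho<2\epsilon\}$; because the leading exponents satisfy $\Re(s_0)>2$ and $\Re(t_0)>1$, and because the physical metric weights $\bar\partial$-derivatives of powers of $\rho$ by compensating factors of $\rho$, the excised tails contribute only $O(\epsilon^\delta)$ in the physical $C^{k,\alpha}$ topology, for a positive $\delta$ depending on $k$, $s_0$, and $t_0$. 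To match the hyperboloidal parametrization of \cite{AHEM-Preliminary}, I would then adjust the boundary jet of $\bar \mu_\epsilon$ inside the same $\epsilon$-collar; the cutoff-localized modification, while bounded in $\bar g$-norm, picks up a factor of $\rho\le 2\epsilon$ when rescaled by $\rho^{-1}$ back to $\mu_\epsilon-\mu$, so its contribution in the physical Hölder norm is again $O(\epsilon)$.

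With $(\lambda_\epsilon,\mu_\epsilon)$ in hand, I would invoke the hyperboloidal existence theorem of \cite{AHEM-Preliminary} to solve \eqref{ConformalConstraints} for $(\phi_\epsilon,W_\epsilon)$ and reconstruct via \eqref{Conformal} to produce initial data $(g_\epsilon,K_\epsilon)$ that, by construction, satisfies \eqref{FirstShearFree} and is smoothly conformally compact. The continuity of the conformal method in the physical Hölder topology, also established in \cite{AHEM-Preliminary}, then yields $(\phi_\epsilon,W_\epsilon)\to(1,0)$ in the relevant spaces and hence $(g_\epsilon,K_\epsilon)\to(g,K)$ in $C^{k,\alpha}(M)\times C^{k,\alpha}(M)$ for every $k$ and $\alpha$. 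The main obstacle is the construction of $(\lambda_\epsilon,\mu_\epsilon)$ itself: three competing conditions --- smoothness at $\partial M$, the shear-free-producing boundary jet structure of \cite{AHEM-Preliminary}, and physical-Hölder closeness to the polyhomogeneous $(g,\Sigma)$ --- must be reconciled simultaneously, and getting the bookkeeping right requires a careful interplay between the polyhomogeneous decay rates $s_0,t_0$, the cutoff scale $\epsilon$, and the characteristic $\rho$-weights relating the physical and conformally compactified tensors.
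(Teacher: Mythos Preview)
Your overall architecture---take $(\lambda,\mu)=(g,\Sigma)$, cut off the polyhomogeneous tails in an $\epsilon$-collar, apply the conformal method, and use smallness of the cutoff region for convergence---matches the paper's strategy. But two of the black boxes you invoke from \cite{AHEM-Preliminary} do not do what you need, and this is precisely where the paper has to work.

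The serious gap is smooth conformal compactness. You write that invoking the hyperboloidal existence theorem of \cite{AHEM-Preliminary} yields data that ``is smoothly conformally compact.'' It does not: the paper states explicitly in the introduction that even shear-free data from \cite{AHEM-Preliminary} ``may not be sufficiently regular at the conformal boundary''---it is typically only polyhomogeneous. Obtaining $\bar g_\epsilon,\bar\Sigma_\epsilon\in C^\infty(\bar M)$ requires (i) a very particular choice of free data, namely $\bar\lambda_\epsilon$ equal to a \emph{product} metric $\D\rho\otimes\D\rho+\bar b$ near $\partial M$ and $\mu_\epsilon$ \emph{identically zero} there, and (ii) a boundary-regularity analysis (the paper's \S\ref{section:boundary}, culminating in Propositions~\ref{prop:Linear-L-Smooth} and~\ref{prop:SemilinearSmoothness}) showing that the indicial structure of $L_{\lambda_\epsilon}$ and $\Delta_{\lambda_\epsilon}-3$ then forces the polyhomogeneous solutions $W_\epsilon,\phi_\epsilon$ to have no log terms. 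Your proposal to merely ``adjust the boundary jet of $\bar\mu_\epsilon$'' and cite \cite{AHEM-Preliminary} skips this entirely; indeed, the paper needs the stronger fact $\mathcal N_\epsilon(u_0)\in\rho^4 C^\infty(\bar M)$ (Lemma~\ref{N2-Estimate}), which relies on the product structure of $\bar\lambda_\epsilon$ near $\partial M$, to push past the resonance at the top indicial root $\mu=3$.

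A smaller point: the continuity statement in \cite{AHEM-Preliminary} that the paper references is in a topology \emph{stronger} than the physical H\"older topology (strong enough to detect shear-freeness), so it is not obviously applicable in the direction you want. The paper instead proves convergence directly via uniform-in-$\epsilon$ invertibility estimates (Proposition~\ref{UniformInvertVL}) and a contraction-mapping argument for $\phi_\epsilon-1$.
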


We now describe the proof of Theorem \ref{Density}; the details are contained in \S \ref{FreeData}--\S\ref{SolveConstraint} below.
First we construct a family of free data $(\lambda_\epsilon, \mu_\epsilon)$ for small $\epsilon >0$.
Our construction is such that the metrics $\lambda_\epsilon$ agree with $g$ away from a neighborhood of $\partial M$, but are smoothly conformally compact.
We also arrange that the  fields $\mu_\epsilon$ agree with $\Sigma$ away from a neighborhood of $\partial M$, but are deformed near the boundary in order that the shear-free condition holds upon  deformation to a solution of the constraint equations.
The free data $(\lambda_\epsilon, \mu_\epsilon)$ is furthermore carefully constructed so that application of the conformal method yields smoothly conformally compact initial data sets.
(The construction is motivated by the analysis in \cite{AnderssonChrusciel-Obstructions}.)
The proof proceeds by applying the conformal method to the free data $(\lambda_\epsilon, \mu_\epsilon)$.
In order to show that the resulting solutions to the constraint equations approach $(g,K)$ as $\epsilon \to 0$, it is necessary to obtain uniform estimates for family of solutions $W_\epsilon$, $\phi_\epsilon$ to \eqref{ConformalConstraints}.

\section{Technical preliminaries}

We present several technical results needed for the proof of Theorem \ref{Density}.

\subsection{Function spaces}
We fix a smooth defining function $\rho$ on $\bar M$, and we make use of weighted H\"older spaces $C^{k,\alpha}_\delta(M)$ of tensor fields on $M$ as defined in \cite{WAH-Preliminary} (see also \cite{Lee-FredholmOperators}).
These spaces are defined independently of any Riemannian structure, but have equivalent norms determined by any sufficiently regular asymptotically hyperbolic metric.
We emphasize that the convention regarding the weight $\delta$ is such that tensor field $u\in C^{0}_\delta(M)$ when $|u|_g \leq C \rho^\delta$ for any asymptotically hyperbolic metric $g = \rho^{-2}\bar g$.
Recall that the \Defn{weight of a tensor bundle} is the covariant rank less the contravariant rank.
(Thus the weight of a vector field is $-1$, while the weight of a metric tensor is $2$.)
The weight of a tensor field is important to keep in mind: If $u$ is a tensor field of weight $r$, then $|u|_g = \rho^{-r}|u|_{\bar g}$. In particular, for tensors of weight $r$ we have the following inclusion:
$$C^{k,\alpha}(\bar M)\subseteq C^{k,\alpha}_r(M);$$
compare with Lemma 3.7 of \cite{Lee-FredholmOperators}.

It is convenient to distinguish the following class of metrics: 
We say that an asymptotically hyperbolic metric $h$ is a \Defn{preferred background metric} if $\bar h =\rho^2 h$ extends smoothly to $\bar M$ and if in a neighborhood of $\partial M$ we have that $\bar h$ is a product metric of the form $\D\rho \otimes \D\rho +\bar b$ for some metric $\bar b$  on $\partial M$.
We denote by $\nabla$ and $\bar\nabla$ the Levi-Civita connections associated to $h$ and $\bar h$ respectively, and note that the difference tensor $\bar\nabla-\nabla$ is an element of $C^{k,\alpha}(M)$ for all $k$ and $\alpha$. 
Throughout this section and the next,  $h$ represents any preferred background metric, and $\bar h = \rho^2 h$.
In \S\ref{FreeData} we fix a preferred background metric, adapted to the metric $g$ appearing in Theorem \ref{Density}, which we retain  throughout the proof of that theorem.

The following is an immediate consequence of Lemma 2.2(d) of \cite{WAH-Preliminary}.
\begin{lemma}
\label{BoundaryLemma}
Suppose $u\in C^{k,\alpha}_r(M)$ is a tensor field of weight $r$ such that $\bar\nabla u\in C^{k-1,\alpha}_{r+1}(M)$ and such that $|u|_{\bar h}\to 0$ as $\rho \to 0$.
Then $u\in C^{k,\alpha}_{r+1}(M)$ and 
\begin{equation*}
\|u\|_{C^{k,\alpha}_{r+1}(M)} 
\leq C \left( 
\|u\|_{C^{k,\alpha}_r(M)}
+
\|\bar\nabla u\|_{C^{k-1,\alpha}_{r+1}(M)}
\right)
\end{equation*}
\end{lemma}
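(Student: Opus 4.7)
The plan is to invoke Lemma 2.2(d) of \cite{WAH-Preliminary} after reducing the statement, near $\partial M$, to a form amenable to that lemma. Away from $\partial M$ the defining function $\rho$ is bounded below, so the norms $C^{k,\alpha}_r(M)$ and $C^{k,\alpha}_{r+1}(M)$ are equivalent on compact subsets of $M$, and nothing needs to be said there; the content of the lemma is entirely asymptotic.

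I would fix a collar neighborhood of $\partial M$ on which $\bar h = \D\rho \otimes \D\rho + \bar b$ is a product metric, and work in a local frame $\{e_a\}$ with $e_0 = \partial_\rho$ and $e_1, \dots, e_{n-1}$ tangent to the level sets of $\rho$ and orthonormal with respect to $\bar b$. Because $\bar h$ is a product in the collar, the Christoffel symbols of $\bar\nabla$ involving $e_0$ vanish identically, so $\bar\nabla_{\partial_\rho}$ acts on the scalar components $u_A$ of $u$ in the induced frame on the appropriate tensor bundle simply as $\partial_\rho u_A$. In this frame the hypothesis $|u|_{\bar h} \to 0$ becomes vanishing of each $u_A$ at $\rho = 0$, while $\bar\nabla u \in C^{k-1,\alpha}_{r+1}(M)$ translates into the requisite weighted Hölder control on each $\partial_\rho u_A$.

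With the problem in this form, Lemma 2.2(d) of \cite{WAH-Preliminary}, applied componentwise (or directly at the tensor level, depending on its formulation there), promotes the decay rate from $r$ to $r+1$ together with the corresponding Hölder estimates at all orders up to $k$. Reassembling the components via the smooth frame sections on $\bar M$ yields $u \in C^{k,\alpha}_{r+1}(M)$ with the stated norm bound. The only subtlety — and the main place care is required — is tracking the single fixed power of $\rho$ governing the conversion between $\bar h$-norms and $h$-norms according to tensor weight; the product structure of $\bar h$ near $\partial M$ trivializes the transverse Christoffel symbols and makes this bookkeeping transparent, which is what allows the essentially scalar integration-from-the-boundary argument behind the reference lemma to be promoted to the present tensor statement.
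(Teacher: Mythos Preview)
Your proposal is correct and follows the same route as the paper: the paper simply records this lemma as ``an immediate consequence of Lemma 2.2(d) of \cite{WAH-Preliminary}'' without further argument, and you invoke exactly that result, supplying the expected reduction (localizing to a collar, using the product structure of $\bar h$ to trivialize transverse Christoffel symbols, and applying the cited lemma componentwise). The extra detail you give is accurate and consistent with how the paper treats the collar neighborhood elsewhere.
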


\subsection{Differential operators}
\label{DiffOpSec}

We now record several results concerning differential operators arising in the conformal method.
A differential operator $\mathcal P = \mathcal P[g]$ of order $l$ arising from a metric $g$ is said to be \Defn{geometric} (in the sense of \cite{Lee-FredholmOperators}) if in any coordinate frame the components of $\mathcal P u$ are linear functions of $u$ and its derivatives, whose coefficients are universal polynomials in the components of $g$, their partial derivatives, and $\sqrt{\det g_{ij}}$, such that the coefficient of the $j$th derivative of $u$ involves no more than $l-j$ derivatives of the metric. Such operators are {\it uniformly degenerate}; the mapping properties of such operators have been studied in \cite{Mazzeo-Edge}, \cite{Lee-FredholmOperators}, \cite{AnderssonChrusciel-Dissertationes}.
Recently, in work \cite{WAH-Preliminary} with James Isenberg and John M.~Lee we have extended some of these results to the {\it weakly asympotically hyperbolic} setting. We recall here several results needed for the proof of Theorem \ref{Density}; the aforementioned works apply in much more general settings.

The following proposition allows us to compare corresponding operators arising from different metrics.

\begin{proposition}[Proposition 7.9 of \cite{AHEM-Preliminary}]
\label{prop:GeometricOperatorsContinuous}
Let $k\geq 0$, $\alpha\in [0,1)$, and $\delta\in \mathbb R$.
Suppose $g\in C^{k,\alpha}(M)$ is an asymptotically hyperbolic metric, and that $\mathcal P$ is a geometric operator of order $l\leq k$.
Then there exists $\epsilon_*>0$ and $C>0$ such that for any asymptotically hyperbolic metric $g^\prime\in C^{k,\alpha}(M)$ with $\|g - g^\prime\|_{C^{k,\alpha}(M)}\leq \epsilon_*$ we have
\begin{equation*}
\|\mathcal P[g]u - \mathcal P[g^\prime]u \|_{C^{k-l,\alpha}_\delta(M)}
\leq C \| g-g^\prime\|_{C^{k,\alpha}(M)} \|u\|_{C^{k,\alpha}_\delta(M)}
\end{equation*}
for all $u\in C^{k,\alpha}_\delta(M)$.
\end{proposition}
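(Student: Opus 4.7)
The plan is to work locally in Möbius-type coordinate charts and exploit the universal polynomial structure of geometric operators. Fix a uniformly locally finite cover of $M$ by Möbius balls $\Phi_p\colon B\to M$ on which pullbacks of any asymptotically hyperbolic metric have uniformly bounded $C^{k,\alpha}(B)$ norms with determinants bounded above and (strictly) below. The norm on $C^{k,\alpha}_\delta(M)$ is equivalent to the supremum over $p$ of $\rho(p)^{-\delta}\|\Phi_p^*u\|_{C^{k,\alpha}(B)}$ (adjusted by the tensor-weight convention), and similarly the norm on $C^{k,\alpha}(M)$ controls $\|\Phi_p^*(g-g')\|_{C^{k,\alpha}(B)}$ uniformly in $p$. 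Choosing $\epsilon_*$ small enough then guarantees that $\Phi_p^*g'$ remains in a fixed uniform $C^{k,\alpha}$-bounded, non-degenerate subset of metric coefficients for every $p$.

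Next I would invoke the defining property of a geometric operator: in any local frame, the components of $\mathcal P[g]u$ depend polynomially on the components of $u$ and its partial derivatives of order at most $l$, with coefficients that are universal polynomials in the components of $g$, in partial derivatives of $g$ of order at most $l-j$ multiplying the $j$th derivative of $u$, and in $\sqrt{\det g_{ij}}$. On the $C^{k,\alpha}$-bounded, non-degenerate set occupied by $\Phi_p^*g$ and $\Phi_p^*g'$, the coefficient map $g\mapsto$ (these coefficients) is smooth and hence Lipschitz with a constant that is uniform across charts. Applying the mean value theorem coefficient-by-coefficient and using the algebra property of $C^{k-l,\alpha}(B)$ yields, in each chart,
\begin{equation*}
\|\Phi_p^*(\mathcal P[g]u - \mathcal P[g']u)\|_{C^{k-l,\alpha}(B)} \le C\,\|\Phi_p^*(g-g')\|_{C^{k,\alpha}(B)}\,\|\Phi_p^*u\|_{C^{k,\alpha}(B)},
\end{equation*}
with $C$ independent of $p$.

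Finally, I would multiply through by $\rho(p)^{-\delta}$ and take the supremum over $p$. Because $\mathcal P$ is geometric, and hence uniformly degenerate in the sense of Mazzeo, its action in Möbius coordinates preserves the decay weight: the weight of $\mathcal P[g]u$ matches that of $u$, so the $\rho(p)^{-\delta}$ factor applied to the left-hand side is absorbed into the weighted norm of $u$ on the right. Combining this with the local characterization of the weighted Hölder norms produces the stated inequality.

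The main obstacle I expect is the bookkeeping of weights: one must verify that, in Möbius coordinates, all partial derivatives of $g$ and of $u$ appearing in the universal polynomial act as weight-neutral (Möbius) derivatives so that no spurious $\rho$-powers are created, and one must match the tensor-weight conventions on both sides of the estimate. The nonlinearity contributed by the $\sqrt{\det g_{ij}}$ factor also demands that $\Phi_p^*g'$ retain a uniform lower bound on its determinant, which is precisely what forces the smallness threshold $\epsilon_*$ and which propagates to control of the Lipschitz constant of the coefficient map.
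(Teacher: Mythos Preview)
The paper does not actually prove this proposition: it is quoted verbatim as Proposition~7.9 of \cite{AHEM-Preliminary} and used as a black box, so there is no in-paper argument to compare against. Your proposal therefore stands or falls on its own merits.

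Your strategy is the standard and correct one, matching the framework of \cite{Lee-FredholmOperators} and \cite{WAH-Preliminary} on which the cited result rests. Localizing via M\"obius charts, using the universal polynomial structure of geometric operators to obtain a chartwise Lipschitz estimate on the coefficient map, and then taking the supremum over charts with the weight $\rho(p)^{-\delta}$ is exactly how such results are established. Your identification of the two genuine technical points---that the $\epsilon_*$ threshold is needed to keep $\Phi_p^*g'$ uniformly non-degenerate (so that $(\det g')^{1/2}$ and $g'^{-1}$ remain smooth bounded functions of the metric components), and that the M\"obius derivatives $\rho\partial_i$ are weight-neutral so no spurious powers of $\rho$ appear---is accurate. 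One small refinement worth making explicit: the coefficient map involves not only $\sqrt{\det g_{ij}}$ but also contractions with $g^{ij}$, so the Lipschitz constant depends on a bound for $\|g^{-1}\|_{C^{k,\alpha}}$ as well; this is again controlled once $\epsilon_*$ is small enough, via the Neumann series for $(g')^{-1}$ centered at $g^{-1}$. With that caveat, the argument is complete.
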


We now turn attention to elliptic geometric operators.
The operators arising in the results here satisfy the following.

\setcounter{assumption}{15}
\begin{assumption}
\label{Assume-P}
Suppose $(M,g)$ is an asymptotically hyperbolic manifold.
We assume $\mathcal P = \mathcal P[g]$ is a second-order linear elliptic operator acting on sections of a tensor bundle $E$.
Furthermore we assume that
 $\mathcal P$ is {geometric} in the sense defined above, and that 
 $\mathcal P$ is formally self-adjoint.
\end{assumption}

The mapping properties of operators satisfying Assumption \ref{Assume-P} can be understood by studying the \Defn{indicial map} $I_s(\mathcal P)$, defined for $s\in \mathbb C$ to be the bundle map 
\begin{equation*}
E\otimes \mathbb C\big|_{\partial M}
\to
E\otimes \mathbb C\big|_{\partial M}
\end{equation*}
given by $I_s(\mathcal P) \bar u = \rho^{-s} \mathcal P (\rho^s \bar u)\big|_{\rho =0}$; see \cite{Mazzeo-Edge}, \cite{Lee-FredholmOperators}.
The \Defn{characteristic exponents} of $\mathcal P$, which we denote by $\mathcal E$, are defined to be those values of $s$ for which $I_s(\mathcal P)$ has a non-trivial kernel at some point on $\partial M$.
In \cite{Lee-FredholmOperators} it is shown that, under Assumption \ref{Assume-P}, these exponents and their multiplicities are constant on $\partial M$, and agree with those associated to  the corresponding operator in the half-space model of hyperbolic space.
Furthermore, due to the self-adjointness of $\mathcal P$, the characteristic exponents are symmetric about the line $\Re{(s)} = 1-r$, where $r$ is the weight of $E$.
The \Defn{indicial radius} $R$ of $\mathcal P$ is defined to be the smallest number $R\geq 0$ such that $\Re{(s)} \leq 1-r+R$ for all $s\in \mathcal E$.

The importance of the indicial radius is the following result from \cite{Lee-FredholmOperators}:
If $(M,g)$ is asymptotically hyperbolic of class $C^{k,\alpha}$ with $\alpha \in (0,1)$, if Assumption \ref{Assume-P} is satisfied, and if  
there is a compact set $K\subset M$ and a constant $C>0$ such that
\begin{equation}
\|u\|_{L^2(M)}\leq C\|\mathcal P u\|_{L^2(M)} 
\quad\text{ for all }\quad
 u\in C^\infty_c(M\setminus K),
\end{equation}
then $\mathcal P\colon C^{k+2,\alpha}_\delta(M) \to C^{k,\alpha}_\delta(M)$ is Fredholm if and only if $|1-\delta|<R$.

The following proposition is a consequence of  
\cite[Proposition 6.3]{AHEM-Preliminary}, 
\cite[Proposition 6.1]{WAH-Preliminary}, and 
\cite[Lemma 5.6]{WAH-Preliminary} (see also 
\cite[Lemma 6.4]{Lee-FredholmOperators}); we emphasize that the results cited apply in much more general situations.

\begin{proposition}
\label{MappingProperties}
Suppose $(M,g)$ is an asymptotically hyperbolic $3$-manifold, and suppose that $g$ is smoothly conformally compact.
\begin{enumerate}
\item\label{Prop4a}
For each $k\geq 0$, $\alpha \in (0,1)$, and $\delta\in (-1,3)$ the vector Laplacian is an isomorphism
\begin{equation*}
L_g\colon C^{k+2,\alpha}_\delta(M) \to C^{k,\alpha}_\delta(M).
\end{equation*}
In particular, there exists a constant $C>0$ such that
\begin{equation*}
\|X \|_{C^{k+2,\alpha}_\delta(M)} \leq C \|L_gX\|_{C^{k,\alpha}_\delta(M)}
\end{equation*}
for all vector fields $X\in C^{k+2,\alpha}_\delta(M)$.

\item\label{Prop4b}
Let $k\geq 0$ and $\alpha\in (0,1)$.
Suppose $\kappa \in C^{k,\alpha}_\sigma(M)$ for some $\sigma >0$ and that $c$ is a constant satisfying $c>-1$ and $c+\kappa\geq 0$.
Then so long as 
\begin{equation*}
|\delta - 1|\leq \sqrt{1+c}
\end{equation*}
the map
\begin{equation*}
\Delta_g - (c+\kappa)\colon C^{k+2,\alpha}_\delta(M) \to C^{k,\alpha}_\delta(M)
\end{equation*}
is an isomorphism.
In particular, there exists a constant $C>0$ such that
\begin{equation*}
\|u \|_{C^{k+2,\alpha}_\delta(M)} \leq C \|\Delta_gu - (c+\kappa)u\|_{C^{k,\alpha}_\delta(M)}
\end{equation*}
for all functions $u\in C^{k+2,\alpha}_\delta(M)$.

Furthermore, if $w\in C^{0}_\delta(M)$ is such that $\Delta_g w - (c+\kappa)w \in C^{k,\alpha}_{\delta^\prime}(M)$ then $w\in C^{k,\alpha}_{\delta^\prime}(M)$ whenever $|\delta^\prime - 1|\leq \sqrt{1+c}$.
\end{enumerate}

\end{proposition}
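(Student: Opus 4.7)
The overall strategy is to verify the hypotheses of the Fredholm theory for geometric operators on asymptotically hyperbolic manifolds developed in \cite{Lee-FredholmOperators}, \cite{AnderssonChrusciel-Dissertationes}, and extended in \cite{WAH-Preliminary}: namely, check Assumption \ref{Assume-P}, compute the indicial radius, verify the required $L^2$ invertibility near infinity, then use injectivity together with formal self-adjointness (which forces index zero and identifies the cokernel with the kernel at the dual weight) to upgrade each Fredholm map to an isomorphism. The a priori estimate then follows from the open mapping theorem.

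For part \ref{Prop4a}, the vector Laplacian $L_g = \mathcal{D}_g^*\mathcal{D}_g$ is manifestly a second-order, formally self-adjoint, geometric operator whose principal symbol is that of a connection Laplacian on vector fields, so Assumption \ref{Assume-P} holds. Since $g$ is smoothly conformally compact, I would compute $I_s(L_g)$ by freezing coefficients at $\partial M$ using a preferred background metric and decomposing a boundary vector field into normal and tangential components; the computation reproduces the vector Laplacian on the half-space model of $\mathbb{H}^3$ and yields characteristic exponents $\{0,4\}$, giving indicial radius $R=2$. Since the self-dual weight $\delta=1$ is separated from these exponents, the required $L^2$ invertibility near infinity is standard, and Lee's criterion produces Fredholmness precisely on $\delta\in(-1,3)$. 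For injectivity I would note that $\delta>-1$ gives enough decay to integrate $\langle L_g X,X\rangle_g$ by parts (boundary terms vanish by the weighted H\"older control and the form of $dV_g$), yielding $\mathcal{D}_g X=0$. Any such $X$ is a decaying conformal Killing field on $(M,g)$, and a boundary asymptotic expansion (viewing the conformal Killing equation as an overdetermined elliptic system whose leading boundary data must vanish and iterating) forces $X\equiv 0$. Since the dual weight $2-\delta$ lies in the same Fredholm range, self-adjointness also kills the cokernel, so $L_g$ is an isomorphism on the full open range $\delta\in(-1,3)$.

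For part \ref{Prop4b}, Assumption \ref{Assume-P} is immediate. Because $\kappa\in C^{k,\alpha}_\sigma(M)$ with $\sigma>0$ contributes nothing to the indicial map, a direct calculation on the half-space model gives $I_s(\Delta_g-(c+\kappa)) = s(s-2)-c$ acting by scalars, with characteristic exponents $s = 1\pm\sqrt{1+c}$ (real since $c>-1$) symmetric about $\Re(s)=1$; hence $R=\sqrt{1+c}$ and the strict Fredholm range is $|1-\delta|<\sqrt{1+c}$. The self-dual weight $\delta=1$ is not among the exponents when $c>-1$, so the $L^2$ invertibility near infinity holds. Injectivity is the point at which the sign assumption $c+\kappa\geq 0$ enters: the asymptotically hyperbolic maximum principle (\cite[Lemma 6.4]{Lee-FredholmOperators}) applied to an operator with nonpositive zeroth-order coefficient forces any decaying solution of $(\Delta_g-(c+\kappa))u=0$ to vanish. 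Self-adjointness then yields the isomorphism throughout the open range. To include the closed endpoints $|1-\delta|=\sqrt{1+c}$ and to establish the regularity upgrade, I would invoke the boundary-weight argument of \cite[Lemma 5.6]{WAH-Preliminary}: given $w\in C^0_\delta(M)$ with $(\Delta_g-(c+\kappa))w\in C^{k,\alpha}_{\delta'}(M)$, I construct $\tilde w \in C^{k+2,\alpha}_{\delta'}(M)$ solving the same equation by applying the open-range isomorphism at a nearby interior weight, and then use the triviality of the kernel at every admissible weight together with an asymptotic expansion/barrier argument to conclude $w=\tilde w$, whence $w\in C^{k,\alpha}_{\delta'}(M)$.

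The main technical obstacle is the injectivity verification combined with the boundary-weight case: rigorously excluding decaying conformal Killing fields in part (a), and extending the isomorphism from the open Fredholm range to the closed one in part (b). Both are standard but nontrivial in the asymptotically hyperbolic context and are precisely what the cited results of \cite{WAH-Preliminary}, \cite{AHEM-Preliminary}, and \cite{Lee-FredholmOperators} are designed to handle; once those are in place, the proof reduces to careful bookkeeping of indicial radii, self-dual weights, and the duality between Fredholm range and cokernel.
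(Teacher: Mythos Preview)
Your proposal is correct and essentially reconstructs the argument behind the references the paper cites. The paper itself gives no proof of this proposition, stating only that it is a consequence of \cite[Proposition 6.3]{AHEM-Preliminary}, \cite[Proposition 6.1]{WAH-Preliminary}, \cite[Lemma 5.6]{WAH-Preliminary}, and \cite[Lemma 6.4]{Lee-FredholmOperators}; your outline (verify Assumption~\ref{Assume-P}, compute the indicial map and radius, invoke the Fredholm criterion, establish injectivity via integration by parts / the maximum principle, and use self-adjointness for surjectivity, with \cite[Lemma 5.6]{WAH-Preliminary} handling the closed-endpoint and regularity-upgrade statements) is precisely the content of those cited results specialized to $L_g$ and $\Delta_g-(c+\kappa)$. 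Your indicial computations also agree with the paper's own later computation of $I_s(L_{\lambda_\epsilon})$ in the proof of Proposition~\ref{VL-NoLogs} and of $I_s(\Delta_{\lambda_\epsilon}-3)$ in the proof of Proposition~\ref{LichNoLogs}.
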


\subsection{The tensor $\B_{\bar g}(\rho)$}
Together with James Isenberg and John M.~Lee, we introduced in \cite{WAH-Preliminary} a conformally invariant version of the trace-free Hessian that is used in \cite{AHEM-Preliminary} to characterize the shear-free condition; we now recall its definition and basic properties.
Let
\begin{equation*}
A_{\bar g}(\rho) 
= \frac{1}{2} |\D\rho|_{\bar g} \Div_{\bar g}\left[ |\D\rho|_{\bar g}\grad_{\bar g}\rho\right].
\end{equation*}
We  define the tensor field $\B_{\bar g}(\rho)$ by 
\begin{equation}
\label{DefineB}
\B_{\bar g}(\rho)
:=|\D\rho|_{\bar g}^6\,\mathcal D_{\bar g}(|\D\rho|^{-2}_{\bar g} \grad_{\bar g}\rho)
+ A_{\bar g}(\rho) \left( \D\rho \otimes \D\rho - \frac{1}{3}|\D\rho|^2_{\bar g}\, \bar g \right),
\end{equation}
where $\mathcal D_{\bar g}$ is the conformal Killing operator defined in \eqref{DefineCK}.

We have the following basic properties of $\B_{\bar g}(\rho)$.
\begin{proposition}[Proposition 4.1 of \cite{WAH-Preliminary}]\hfill
\label{B-BasicProperties}
\begin{enumerate}
\item $\B_{\bar g}(\rho)$ is symmetric and trace-free.

\item\label{B-TransverseProperty} $\B_{\bar g}(\rho)(\grad_{\bar g}\omega, \cdot)=0$.

\item\label{B-Scaling} $\B_{\bar g}(c \rho)=c^5\B_{\bar g}(\rho)$ for all constants $c$.

\item\label{B-ConformalScaling} If $\theta$ is a strictly positive function then 
$\B_{\theta^4\bar g}(\rho)=\theta^{-8}\B_{\bar g}(\rho)$
and $A_{\theta^4\bar g}(\rho) = \theta^{-8}A_{\bar g}(\rho)$.
\end{enumerate}
\end{proposition}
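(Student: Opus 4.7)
The plan is to verify each of the four properties by direct computation from \eqref{DefineB}. The key structural observation is that the vector field $V := |\D\rho|^{-2}_{\bar g}\grad_{\bar g}\rho$ satisfies $V(\rho)\equiv 1$, a condition involving neither a choice of metric in the conformal class nor, up to constants, a choice of defining function.

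Item (a) is immediate: $\mathcal{D}_{\bar g}$ produces symmetric, trace-free tensors by \eqref{DefineCK}, and the second term in \eqref{DefineB} is manifestly symmetric and has trace $|\D\rho|^2_{\bar g} - \tfrac{1}{3}\cdot 3\cdot|\D\rho|^2_{\bar g} = 0$ in dimension three.

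Item (b) is the heart of the proposition and the property that dictates the choice of $A_{\bar g}(\rho)$. Writing $f := |\D\rho|^{-2}_{\bar g}$ so that $V = f\,\grad_{\bar g}\rho$, the identity $(\mathcal L_V\bar g)(X,Y) = \bar g(\nabla_X V, Y) + \bar g(\nabla_Y V, X)$ together with the elementary relation $\Hess_{\bar g}\rho(X,\grad_{\bar g}\rho) = \tfrac{1}{2}X(|\D\rho|^2_{\bar g})$ shows that $\mathcal{D}_{\bar g}V(\grad_{\bar g}\rho,\cdot)$ is a scalar multiple of $\D\rho$; similarly, the second term of \eqref{DefineB}, contracted with $\grad_{\bar g}\rho$, equals $\tfrac{2}{3}|\D\rho|^2_{\bar g}A_{\bar g}(\rho)\,\D\rho$. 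Expanding $A_{\bar g}(\rho)$ via the product rule $\Div_{\bar g}(|\D\rho|_{\bar g}\grad_{\bar g}\rho) = \langle \D|\D\rho|_{\bar g},\D\rho\rangle_{\bar g} + |\D\rho|_{\bar g}\Delta_{\bar g}\rho$ and comparing the two scalar coefficients produces exactly the cancellation required.

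Items (c) and (d) follow from scaling computations. For (c): under $\rho\mapsto c\rho$ one has $V\mapsto c^{-1}V$ and $|\D\rho|^6_{\bar g}\mapsto c^6|\D\rho|^6_{\bar g}$, giving a net factor of $c^5$ on the first term by linearity of $\mathcal{D}_{\bar g}$; direct substitution yields $A_{\bar g}(c\rho) = c^3 A_{\bar g}(\rho)$, while $\D\rho\otimes\D\rho - \tfrac{1}{3}|\D\rho|^2_{\bar g}\bar g$ scales by $c^2$, again yielding $c^5$. For (d): the vector field $V$ is itself conformally invariant (precisely by virtue of $V(\rho)\equiv 1$), and a short Lie-derivative calculation using $\sqrt{\det(\theta^4\bar g)} = \theta^6\sqrt{\det\bar g}$ in three dimensions establishes the transformation law $\mathcal{D}_{\theta^4\bar g}W = \theta^4\mathcal{D}_{\bar g}W$ for any vector field $W$, so that the first term transforms by $\theta^{-12}\cdot\theta^4 = \theta^{-8}$. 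The same divergence calculation yields $A_{\theta^4\bar g}(\rho) = \theta^{-8}A_{\bar g}(\rho)$, and since $\D\rho\otimes\D\rho - \tfrac{1}{3}|\D\rho|^2_{\bar g}\bar g$ is itself conformally invariant, the second term transforms by the same factor $\theta^{-8}$.

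The main obstacle is the cancellation argument in (b); the remaining identities are routine tensor bookkeeping. The geometric content of (b), however, is what makes the proposition useful: on restriction to $\partial M$, $\B_{\bar g}(\rho)$ becomes a tensor tangential to the boundary, precisely the feature that allows $\B_{\bar g}(\rho)$ to serve as the conformally covariant replacement for the trace-free Hessian in the characterization of the shear-free condition.
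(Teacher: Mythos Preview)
The paper does not supply its own proof of this proposition; it is quoted verbatim as Proposition~4.1 of \cite{WAH-Preliminary} and no argument is given here. So there is nothing in the paper to compare your attempt against.

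That said, your outline is correct. I checked the cancellation in (b): with $f=|\D\rho|_{\bar g}^{-2}$ one finds, after using $\Hess_{\bar g}\rho(\grad_{\bar g}\rho,\cdot)=\tfrac12\,\D(|\D\rho|^2_{\bar g})$, that
\[
|\D\rho|_{\bar g}^{6}\,\mathcal D_{\bar g}V(\grad_{\bar g}\rho,\cdot)
= -\tfrac{1}{3}\bigl(|\D\rho|_{\bar g}^{3}\langle \D|\D\rho|_{\bar g},\D\rho\rangle_{\bar g}+|\D\rho|_{\bar g}^{4}\Delta_{\bar g}\rho\bigr)\,\D\rho,
\]
while the second term of \eqref{DefineB} contracted with $\grad_{\bar g}\rho$ contributes the same quantity with the opposite sign, using your product-rule expansion of $A_{\bar g}(\rho)$. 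The scaling arguments for (c) and (d) are exactly as you describe: $V$ is conformally invariant, $\mathcal D_{\theta^4\bar g}W=\theta^4\mathcal D_{\bar g}W$ in dimension three, and the divergence identity $\Div_{\theta^4\bar g}(\theta^{-6}X)=\theta^{-6}\Div_{\bar g}X$ gives $A_{\theta^4\bar g}(\rho)=\theta^{-8}A_{\bar g}(\rho)$ directly. Your proposal constitutes a valid proof; the only comment is that in (b) you assert the cancellation rather than display it, so if this were to stand alone as a proof you would want to include the one-line computation above.
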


\begin{proposition}[Corollary 4.4 of \cite{AHEM-Preliminary}]
Suppose $(g,K) = (g, \Sigma - g)$ is a polyhomogeneous asymptotically-hyperbolic CMC initial data set.
Then the shear-free condition \eqref{FirstShearFree} is satisfied if and only if
\begin{equation}
\left.\bar\Sigma\right|_{\partial M} = \left.\B_{\bar g}(\rho)\right|_{\partial M},
\end{equation}
where $\bar g = \rho^2 g$ and $\bar\Sigma = \rho\Sigma$.

\end{proposition}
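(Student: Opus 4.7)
The plan is to reduce the claimed equivalence to the boundary identity
\[
\B_{\bar g}(\rho)\big|_{\partial M} = \bigl(\Hess_{\bar g}\rho - \tfrac{1}{3}(\Delta_{\bar g}\rho)\bar g\bigr)\big|_{\partial M},
\]
from which comparison with \eqref{FirstShearFree} is immediate. The starting observation, a direct consequence of \eqref{DefineCK} and the symmetry of the Hessian, is that $\mathcal D_{\bar g}(\grad_{\bar g}\rho) = \Hess_{\bar g}\rho - \tfrac{1}{3}(\Delta_{\bar g}\rho)\bar g$ globally. Hence the entire proof reduces to accounting for the factor $f := |\D\rho|^{-2}_{\bar g}$ inside $\mathcal D_{\bar g}$ in the definition \eqref{DefineB} of $\B_{\bar g}(\rho)$, and to showing that the $A_{\bar g}(\rho)$ term precisely absorbs the resulting discrepancy at $\partial M$.

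Setting $V := \grad_{\bar g}\rho$ and $\omega := |\D\rho|^2_{\bar g}$, the first step is to expand
\[
(\mathcal D_{\bar g}(fV))_{ab} = f(\mathcal D_{\bar g}V)_{ab} + \tfrac{1}{2}\bigl((\bar\nabla_a f)\partial_b\rho + (\bar\nabla_b f)\partial_a\rho\bigr) - \tfrac{1}{3}(\bar\nabla_c f)(\bar\nabla^c\rho)\,\bar g_{ab}.
\]
Because $\omega \equiv 1$ on $\partial M$, the differential $\D\omega|_{\partial M}$ annihilates $T\partial M$ and is therefore a scalar multiple of $\D\rho$; pairing with $V$ identifies the multiple and yields $\bar\nabla_a f|_{\partial M} = -(\bar\nabla_V\omega)\,\partial_a\rho$. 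Substituting this in, with $f = 1$ and $|\D\rho|^6_{\bar g} = 1$ at $\partial M$, one obtains
\[
|\D\rho|^6_{\bar g}\,\mathcal D_{\bar g}(fV)\big|_{\partial M} = \bigl(\Hess_{\bar g}\rho - \tfrac{1}{3}(\Delta_{\bar g}\rho)\bar g\bigr)\big|_{\partial M} - (\bar\nabla_V\omega)\bigl(\D\rho\otimes\D\rho - \tfrac{1}{3}|\D\rho|^2_{\bar g}\bar g\bigr)\big|_{\partial M}.
\]
A parallel direct calculation, using $\bar\nabla_X|\D\rho|_{\bar g} = \tfrac{1}{2}|\D\rho|^{-1}_{\bar g}\bar\nabla_X\omega$ and $|\D\rho|_{\bar g} = 1$ on $\partial M$, gives $A_{\bar g}(\rho)|_{\partial M} = \tfrac{1}{2}\Delta_{\bar g}\rho|_{\partial M} + \tfrac{1}{4}\bar\nabla_V\omega|_{\partial M}$; so the cancellation needed is exactly $\bar\nabla_V\omega|_{\partial M} = \tfrac{2}{3}\Delta_{\bar g}\rho|_{\partial M}$.

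This final identity is where the Einstein equations enter. Polyhomogeneity of $\bar g$ and $\bar\Sigma$ provides enough regularity at $\partial M$ to differentiate \eqref{LichRho} in the transverse direction $V$; on evaluating at $\rho = 0$, the terms carrying an explicit factor of $\rho$ drop out and the CMC assumption $\tau \equiv -3$ kills the $V(\tau^2)$ contribution, leaving exactly $4\Delta_{\bar g}\rho = 6\bar\nabla_V\omega$ at $\partial M$. Feeding this back makes $A_{\bar g}(\rho)|_{\partial M} = \bar\nabla_V\omega|_{\partial M}$, the offending term in $\B_{\bar g}(\rho)$ cancels, and we conclude $\B_{\bar g}(\rho)|_{\partial M} = (\Hess_{\bar g}\rho - \tfrac{1}{3}(\Delta_{\bar g}\rho)\bar g)|_{\partial M}$ as desired. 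The main subtlety, and the only step requiring genuine input beyond algebraic bookkeeping, is recognizing that this critical match between $A_{\bar g}(\rho)$ and $\bar\nabla_V\omega$ comes from the normal derivative of the Hamiltonian constraint rather than from any structural feature of $\mathcal D_{\bar g}$ or of asymptotic hyperbolicity alone, and that the CMC hypothesis is precisely what prevents a transverse-variation-of-$\tau$ term from obstructing this cancellation.
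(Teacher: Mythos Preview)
The paper does not actually give a proof of this proposition; it is simply quoted as Corollary~4.4 of \cite{AHEM-Preliminary}. Your argument, by contrast, supplies a direct computation, and it is correct. The reduction to showing $\B_{\bar g}(\rho)\big|_{\partial M} = \bigl(\Hess_{\bar g}\rho - \tfrac13(\Delta_{\bar g}\rho)\bar g\bigr)\big|_{\partial M}$ is the natural one, the expansion of $\mathcal D_{\bar g}(fV)$ and of $A_{\bar g}(\rho)$ at the boundary are accurate, and the key identity $\bar\nabla_V\omega\big|_{\partial M} = \tfrac23\Delta_{\bar g}\rho\big|_{\partial M}$ really is the content of the $V$-derivative of \eqref{LichRho} at $\rho=0$ under the CMC assumption.

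One small point worth tightening: when you differentiate \eqref{LichRho} transversally, the term $\rho^2 R[\bar g]$ involves $R[\bar g]$, which with $\bar g\in C^2_\phg(\bar M)$ is a~priori only continuous at $\partial M$, so the pointwise derivative $V(R[\bar g])$ need not exist. The cleaner way to extract the identity is to rewrite \eqref{LichRho} (with $\tau\equiv -3$) as $\omega-1 = \tfrac23\rho\,\Delta_{\bar g}\rho + \tfrac16\rho^2\bigl(R[\bar g]-|\bar\Sigma|^2_{\bar g}\bigr)$, observe that the right side is $\tfrac23\rho\,\Delta_{\bar g}\rho + O(\rho^{2})$ in the polyhomogeneous sense (the second term is $\rho^2$ times a bounded polyhomogeneous function), and then take $\lim_{\rho\to 0}\rho^{-1}(\omega-1)$. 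Since $\omega\in C^1(\bar M)$ with $\omega|_{\partial M}=1$, this limit equals $\bar\nabla_V\omega|_{\partial M}$, and the right side gives $\tfrac23\Delta_{\bar g}\rho|_{\partial M}$. This avoids any appeal to differentiability of $R[\bar g]$ and makes explicit where the polyhomogeneity hypothesis is used.
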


\section{Analysis on $\bar M$}
\label{section:boundary}
The solution to a geometric elliptic equation of the form $\mathcal P u = f$ on an asymptotically hyperbolic manifold $(M,g)$ may be smooth on $M$, but may not extend smoothly to $\bar M$, even if $\bar g\in C^\infty(\bar M)$; see \cite{Mazzeo-Edge}, \cite{AnderssonChrusciel-Dissertationes}, \cite{ChruscielDelayLeeSkinner}, et.~al.
Rather, many solutions to elliptic equations have asymptotic expansions at $\partial M$ containing powers of $\rho$ and powers of $\log\rho$.
The logarithmic terms arise in situations where there is a resonance (see \S\ref{bdry:sec} or \cite[Remark A.12]{WAH-Preliminary}) and are thus features of the algebraic structure of $\mathcal P$.
Tensor fields with expansions involving powers of $\rho$ and $\log\rho$ are called {\it polyhomogeneous}.
We now present a more careful definition, and subsequently discuss conditions under which the solution itself is in fact smooth on $\bar M$.
We note that a number of related definitions of polyhomogeneity appear in the literature; see \cite{Mazzeo-Edge}, \cite{AnderssonChrusciel-Dissertationes}, \cite{WAH-Preliminary}, \cite{IsenbergLeeStavrov-AHP}, \cite{ChruscielDelayLeeSkinner}, et.~al.

For convenience, we work with a fixed preferred background metric $h = \rho^{-2}\bar h$, denoting by $\bar\nabla$ the Levi-Civita connection associated to $\bar h$.
(The following, however, is independent of the choice of $h$.)
We subsequently make frequent and implicit use of the following construction:
If $E$ is a tensor bundle over $\bar M$ and $\bar u$ is a smooth section of $E\big|_{\partial M}$, we may extend $\bar u$ to the neighborhood of $\partial M$ by parallel transport along $\grad_{\bar h}\rho$; using a smooth cutoff function, the resulting tensor may be extended further to all of $\bar M$.
Furthermore, when working in the neighborhood of $\partial M$ where $\bar h = \D\rho\otimes \D\rho + \bar b$, we abuse notation by writing $\rho\partial_\rho$ for $\rho\bar\nabla_{\grad_{\bar h}\rho}$.

\subsection{Polyhomogeneity}
In order to carefully define polyhomogeneity for tensor fields, we first introduce for each $\delta
\in \mathbb R$ the class $\mathcal B_\delta(M)$ of tensor fields, defined by
\begin{equation*}
\mathcal B_\delta(M)
= \bigcap_{\substack{0\leq k\\ t<\delta}} C^k_t(M).
\end{equation*}
(The reader may wish to compare these spaces to the conormality spaces appearing in \cite{WAH-Preliminary} and the references therein.)

The importance of this definition is that if $s\in \mathbb C$ then $\rho^s \log\rho$ is contained in $\mathcal B_\delta(M)$ with $\delta = \Re{(s)}$, but is not of class $C^0_\delta(M)$.
Furthermore, $u\in \mathcal B_\delta(M)$ if and only if $(\log\rho) u\in \mathcal B_\delta(M)$.
If tensor $u$ of weight $r$ satisfies  $u\in \mathcal B_{\delta+r}(M)$ for some $\delta>0$, then $(\rho\partial_\rho)^ku$ vanishes at $\partial M$ for all $k\geq 0$.
The same holds for certain other fields, such as the functions $\rho^s(\log\rho)^{-n}$ with $\Re{(s)}=0$ and $n$ a positive integer.
Consequently, we obtain the following.

\begin{lemma}
\label{LinInd}
Suppose $E$ is a tensor bundle of weight $r$, and that $t_i\in \mathbb{C}$, $q_i\in \mathbb{N}_0$, and sections $\bar u_i$ of $E\big{|}_{\partial M}$ are such that 
$$
u=\sum_{i=1}^N \rho^{t_i}(\log \rho)^{q_i}\bar u_i \in \mathcal B_{\delta+r}(M)
$$
for some $\delta>\max_{1\le i\le N}\Re(t_i)$. 
Then $\bar u_i=0$ for all $1\le i\le N$. 
\end{lemma}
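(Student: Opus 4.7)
The plan is to reduce the statement to a scalar problem by pairing $u$ against parallel-transported dual sections, then dispatch the scalar version by peeling off the leading order of the asymptotic expansion.

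First, I would combine any terms sharing a pair $(t_i,q_i)$ so that the pairs are distinct. For an arbitrary $p\in\partial M$ and covector $\xi\in E_p^*$, extend $\xi$ to a collar neighborhood of $\partial M$ by parallel transport along $\grad_{\bar h}\rho$; since each $\bar u_i$ is extended the same way and $\bar\nabla$ preserves the natural pairing between $E$ and $E^*$, the scalar function
\begin{equation*}
F(p,\rho)=\langle u(p,\rho),\xi\rangle=\sum_{i=1}^{N}c_i(p)\,\rho^{t_i}(\log\rho)^{q_i},\qquad c_i(p)=\langle\bar u_i(p),\xi\rangle,
\end{equation*}
depends only on $\rho$ for fixed $p$. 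Using $|u|_g=\rho^r|u|_{\bar h}$, the hypothesis $u\in\mathcal B_{\delta+r}(M)$ translates to $|u|_{\bar h}\leq C_s\rho^s$ for every $s<\delta$; since the parallel extension of $\xi$ has constant $\bar h$-norm, this yields $|F(p,\rho)|\leq C_s'\rho^s$ for every $s<\delta$.

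It therefore suffices to prove the scalar statement: if $F(\rho)=\sum_i c_i\rho^{t_i}(\log\rho)^{q_i}$ with distinct pairs $(t_i,q_i)$ satisfies $F(\rho)=O(\rho^s)$ for every $s<\delta$ (with $\delta>\max_i\Re(t_i)$), then $c_i=0$ for all $i$. Suppose not. Set $\mathcal I=\{i:c_i\neq 0\}$, $\tau=\min_{i\in\mathcal I}\Re(t_i)$, and $Q=\max\{q_i:i\in\mathcal I,\,\Re(t_i)=\tau\}$. Dividing by $\rho^\tau(\log\rho)^Q$, every subleading term tends to zero as $\rho\to 0^+$: if $\Re(t_i)>\tau$ the factor $\rho^{\Re(t_i)-\tau}$ beats any power of $|\log\rho|$, while if $\Re(t_i)=\tau$ and $q_i<Q$ the factor $(\log\rho)^{q_i-Q}$ vanishes. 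Hence
\begin{equation*}
\frac{F(\rho)}{\rho^\tau(\log\rho)^Q}=\sum_{\Re(t_i)=\tau,\,q_i=Q}c_i\,e^{i\,\mathrm{Im}(t_i)\log\rho}+o(1)\quad\text{as }\rho\to 0^+.
\end{equation*}
On the other hand, choosing any $s$ with $\tau<s<\delta$, the bound on $F$ gives $|F(\rho)/(\rho^\tau(\log\rho)^Q)|\leq C_s\rho^{s-\tau}/|\log\rho|^Q\to 0$. Setting $x=-\log\rho\to\infty$, the leading oscillatory sum becomes $\sum c_i\,e^{-i\,\mathrm{Im}(t_i)x}$ with distinct real frequencies, tending to $0$. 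A Ces\`aro-average argument — multiply by $e^{i\,\mathrm{Im}(t_j)x}$, average over $[0,T]$, and let $T\to\infty$ — isolates each $c_j$ in the leading sum and forces it to vanish, contradicting $c_j\neq 0$.

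Applying this scalar lemma at each $p\in\partial M$ with $\xi$ ranging over a basis of $E_p^*$ gives $\bar u_i(p)=0$ for all $i$ and all $p$, concluding the proof. The main substantive step is the Ces\`aro-mean argument — the classical fact that distinct pure oscillations are asymptotically linearly independent — and everything else is bookkeeping, since parallel transport of the $\bar u_i$ and of $\xi$ reduces the computation in the collar to ordinary calculus in $\rho$.
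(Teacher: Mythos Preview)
Your proof is correct. Both you and the paper begin with the same reduction to a scalar problem at a fixed boundary point (you spell out the parallel-transport pairing more explicitly), but the decisive scalar step is handled differently. The paper invokes the observation, recorded just before the lemma, that membership in $\mathcal B_{\delta+r}(M)$ with $\delta>0$ forces all $\rho\partial_\rho$-derivatives to vanish at $\partial M$, and then reduces the linear independence of the $\rho^{ib_j}(\log\rho)^{q_j}$ to that derivative criterion. You instead use the decay bound $|F(\rho)|\le C_s\rho^s$ directly: divide by the leading term $\rho^\tau(\log\rho)^Q$, reduce to a finite trigonometric sum $\sum c_i e^{-i\,\mathrm{Im}(t_i)x}$ that tends to zero, and kill each coefficient by Ces\`aro averaging. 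Your route is entirely self-contained and avoids the $\rho\partial_\rho$-machinery, at the cost of an extra analytic step (the Ces\`aro argument); the paper's route is terser because it leans on the ambient framework of the section. Either way the substance is the same elementary fact that distinct $\rho^{t}(\log\rho)^q$ are asymptotically independent.
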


\begin{proof}
It suffices to fix a point of $\partial M$ and consider the case when $u$ is a function. Under such restrictions our claim is a consequence of the fact that a finite $\mathbb{R}$-linear combination of single-variable functions of the form $\rho^t(\log \rho)^q$ with $\Re(t)=0$
$$\sum_{j=1}^{J} a_j \rho^{ib_j}(\log \rho)^{q_j}$$
vanishes at $\rho=0$ together with all of its $\rho\partial_\rho$ derivatives if and only if all of the coefficients $a_j$ vanish.
\end{proof}

A smooth section $u$ of tensor bundle $E$ over $M$ having weight $r$ is defined to be \Defn{polyhomogeneous} if 
\begin{enumerate}
\item there exist sequences $s_i\in \mathbb C$ and $p_i\in \mathbb N_0$ with $\Re{(s_i)}$ non-decreasing and diverging to $+\infty$ as $i\to\infty$,

\item 
\label{part:phg-bdy-sections}
for $i,p\in \mathbb N_0$ with $0\leq p \leq p_i$ there exists smooth section $\bar u_{ip}$ of $E\big|_{\partial M}$, 
and

\item
\label{part:phg-expansion}
 for each $k\in \mathbb N_0$ there exists $N_k\in \mathbb N_0$ such that 
\begin{equation}
\label{GenericTensorExpansion}
u - \sum_{i=0}^{N_k} \sum_{p=0}^{p_i} \rho^{s_i-r} (\log\rho)^p \bar u_{ip} \in \mathcal B_k(M).
\end{equation}

\end{enumerate}
We assume that those exponents $s_i$ having the same real part are ordered such that their imaginary parts are increasing.
(The factor of $\rho^{-r}$ in \eqref{part:phg-expansion} is motived by the fact that $|u|_g = \rho^r |u|_{\bar g}$; thus the leading order behavior of $|u|_{ g}$ will be as $\rho^{\Re{(s_0)}}(\log\rho)^{p_0}$.)

If $u$ satisfies the definition above, we write
\begin{equation*}
u\sim \sum_{i=0}^{\infty} \sum_{p=0}^{p_i} \rho^{s_i-r} (\log\rho)^p \bar u_{ip}.
\end{equation*}
Let $\mathcal B_\phg(M)$ be the collection of all tensor fields on $M$ which are polyhomogeneous as defined above.
We furthermore denote by $\mathcal B^\phg_\delta(M)$ those polyhomogeneous tensor fields that are of class $\mathcal B_\delta(M)$, and by $C^{k}_\phg(\bar M)$ those polyhomogeneous tensor fields extending to tensor fields of class $C^k$ on $\bar M$.

\begin{remark}\label{PhgSplitting}\hfill

\begin{enumerate}
\item It follows from Lemma \ref{LinInd} that if $u\in \mathcal B^\phg_\delta(M)$ then we have $\Re(s_0)\geq \delta$. 

\item Polyhomogeneous expansions are unique in the sense that if 
$$v\sim \sum_{i=0}^\infty \sum_{p=0}^{p_i} \rho^{s_i} (\log \rho)^p \bar v_{ip} \text{\ \ and \ \ } v\sim \sum_{i=0}^\infty \sum_{q=0}^{q_i} \rho^{t_i} (\log \rho)^p \bar w_{ip},$$
then $s_i=t_i$, $p_i=q_i$, and $\bar v_{ip}=\bar w_{jp}$.

\item 

Tensor fields $u$ which are smooth on $\bar M$  are polyhomogeneous with a Taylor-series like expansion 
$$
u\sim \sum_{n=0}^\infty \frac{\rho^n}{n!} \bar u_n.
$$
The fields $\bar u_n$ are the restrictions of $\bar\nabla {}^n u(\grad_{\bar h} \rho,\dots, \grad_{\bar h}\rho, \cdot, \dots, \cdot)$ to the boundary. 
We emphasize that this holds regardless of whether $u$ is analytic or not.

\item A tensor field $u\in \mathcal B^\phg_\delta(M)$ of weight $r$ is in $C^l_\phg(\bar M)$ if $\delta> l+r$; see Lemma 3.7 in \cite{Lee-FredholmOperators}.
Thus $u\in C^\infty(\bar M)$ if and only if $u\in C^\infty(\bar M)+\mathcal B^\phg_k(M)$ for all $k\in \mathbb{N}$. 
Furthermore, a polyhomogeneous tensor field 
$$u\sim \sum_{i=0}^\infty \sum_{p=0}^{p_i} \rho^{s_i} (\log \rho)^p \bar u_{ip}.$$
is smooth on $\bar M$ if and only if $s_i\in \mathbb{N}_0$ and $p_i=0$ for all $i$.
\end{enumerate}
\end{remark}

\subsection{PDE results}
The relationship between the uniformly degenerate elliptic operators and polyhomogeneity has been extensively studied in \cite{Mazzeo-Edge}; see also \cite{AnderssonChrusciel-Dissertationes}, \cite{WAH-Preliminary}, \cite{AHEM-Preliminary} for studies focusing on operators arising in the study of the Einstein constraint equations.
It this paper we make use of the following result, which is a consequence of Proposition 6.3 of \cite{AHEM-Preliminary} and Proposition 6.4 of \cite{WAH-Preliminary}.

\begin{proposition}
\label{SolveGeneric}
Suppose that $(M,g)$ is a smoothly conformally compact  asymptotically hyperbolic $3$-manifold. 
\begin{enumerate}

\item 
\label{SolveGenericVL}
If $Y$ is a vectorfield on $M$ which extends smoothly to $\bar M$, then the solution $W$ to 
\begin{equation*}
L_g W = Y
\end{equation*}
satisfies $W\in\rho^3 C^0_\phg(\bar M)$ and $\mathcal D_gW \in C^0_\phg(\bar M)$.

\item
\label{SolveGenericLich}
For any function $A\in \rho^2 C^\infty(\bar M)$,  there exists a unique positive solution $\phi\in C^2_\phg(\bar M)$ to
\begin{equation*}
\Delta_g\phi = \frac18 R[g] \phi - A \phi^{-7} + \frac{3}{4}\phi^5,
\qquad \left.\phi\right|_{\partial M} =1.
\end{equation*}
Furthermore, if $R[g]+6 = \mathcal O(\rho^2)$ then $\phi -1= \mathcal O(\rho^2)$.

\end{enumerate}
\end{proposition}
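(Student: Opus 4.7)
The plan is to prove each part by combining the isomorphism results of Proposition \ref{MappingProperties} with a polyhomogeneous boundary expansion argument obtained by iteratively inverting the relevant indicial operator.

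For part (\ref{SolveGenericVL}): Since $Y$ is smooth on $\bar M$ and has weight $-1$, the inclusion $C^{k,\alpha}(\bar M) \subseteq C^{k,\alpha}_1(M)$ places $Y$ in the target of $L_g$ at weight $\delta = 1 \in (-1,3)$. Proposition \ref{MappingProperties}(\ref{Prop4a}) then provides a unique $W \in C^{k+2,\alpha}_\delta(M)$ solving $L_g W = Y$ for every $\delta \in (-1,3)$; by uniqueness these coincide, so $W$ lies in every such weighted H\"older class. To upgrade $W$ to a polyhomogeneous tensor field with the claimed leading order $\rho^3$, I would iteratively construct a formal polyhomogeneous series $W_{\text{formal}}$ by inverting the indicial operator $I_s(L_g)$ term-by-term against the smooth Taylor expansion of $Y$, introducing logarithms only at the finitely many indicial roots. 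Because $W \in C^{k+2,\alpha}_\delta(M)$ for all $\delta<3$, the expansion cannot carry any nonzero term whose $g$-norm contributes below $\rho^3$; the remainder $W - W_{\text{formal}}$ decays faster than every power of $\rho$ and can be absorbed using Lemma \ref{BoundaryLemma}, yielding $W \in \rho^3 C^0_\phg(\bar M)$. The claim $\mathcal D_g W \in C^0_\phg(\bar M)$ then follows by applying the first-order operator $\mathcal D_g$ to this expansion: the $\rho^3$ leading behavior of $W$, together with the conformal-weight shift of $\mathcal D_g$ from vector fields (weight $-1$) to trace-free symmetric $2$-tensors (weight $2$), produces a tensor whose $\bar g$-components extend continuously to $\bar M$.

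For part (\ref{SolveGenericLich}): I would substitute $\phi = 1 + \psi$ with $\psi|_{\partial M} = 0$ and Taylor expand the nonlinearity to write
\begin{equation*}
(\Delta_g - 3 - \kappa) \psi = \bigl[\tfrac{1}{8}(R[g] + 6) - A\bigr] + F(\psi),
\end{equation*}
where $\kappa = \tfrac{1}{8}(R[g]+6) + 7A$ vanishes at $\partial M$ and $F(\psi) = \mathcal O(\psi^2)$ collects the superlinear contributions. Proposition \ref{MappingProperties}(\ref{Prop4b}), applied with $c = 3$ (so $\sqrt{1+c} = 2$), shows that $\Delta_g - 3 - \kappa$ is an isomorphism on $C^{k+2,\alpha}_\delta(M)$ for any $\delta$ with $|\delta - 1| \leq 2$. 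Picking some $\delta \in (0, 3)$, a contraction-mapping argument in a small ball yields a unique small $\psi$; positivity of $\phi = 1 + \psi$ is automatic, and uniqueness among positive solutions approaching $1$ at $\partial M$ follows from the monotonicity of $\phi \mapsto \tfrac{1}{8}R[g]\phi - A\phi^{-7} + \tfrac{3}{4}\phi^5$ via the maximum principle. Polyhomogeneity of $\phi$ then follows by the same indicial iteration as in part (\ref{SolveGenericVL}), applied now to $\Delta_g - 3$ with polyhomogeneous coefficients. Finally, under the hypothesis $R[g] + 6 = \mathcal O(\rho^2)$, the right-hand side above lies in $C^{k,\alpha}_2(M)$, and the bootstrap clause at the end of Proposition \ref{MappingProperties}(\ref{Prop4b}), applied with $\delta' = 2$ (well inside the admissible range $[-1, 3]$), promotes $\psi$ into $C^{k,\alpha}_2(M)$, giving $\phi - 1 = \mathcal O(\rho^2)$.

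The main technical obstacle in both parts is the polyhomogeneous boundary analysis: the Fredholm isomorphisms supply only weighted H\"older regularity, and determining the precise leading exponents $\rho^3$ and $\rho^2$ requires careful computation of the indicial operators $I_s(L_g)$ and $I_s(\Delta_g - 3)$ together with tracking of the resonances that generate logarithmic corrections. This step is the technical heart of the argument and rests on the edge-calculus framework of \cite{Mazzeo-Edge} as adapted in \cite{WAH-Preliminary} and \cite{AHEM-Preliminary}.
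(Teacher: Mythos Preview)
The paper does not supply its own proof of this proposition; it is quoted as a consequence of Proposition~6.3 of \cite{AHEM-Preliminary} and Proposition~6.4 of \cite{WAH-Preliminary}. Your outline is broadly in the spirit of those references, but there is a concrete error that breaks the existence step in part~(\ref{SolveGenericVL}).

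You correctly record that a vector field has tensor weight $r=-1$, yet you then invoke the inclusion $C^{k,\alpha}(\bar M)\subseteq C^{k,\alpha}_1(M)$. The inclusion in the paper is $C^{k,\alpha}(\bar M)\subseteq C^{k,\alpha}_r(M)$, so a vector field smooth on $\bar M$ lies only in $C^{k,\alpha}_{-1}(M)$: concretely $|Y|_g=\rho^{-1}|Y|_{\bar h}$ is of order $\rho^{-1}$, not $\rho^{1}$. The value $\delta=-1$ sits exactly on the endpoint of the isomorphism window $(-1,3)$ in Proposition~\ref{MappingProperties}(\ref{Prop4a}), so that result cannot be invoked directly to produce $W$ from a general smooth $Y$. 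The correct order of operations is the reverse of what you describe: one first builds a finite formal polyhomogeneous partial expansion for $W$ by inverting the indicial operator (characteristic exponents $0$ and $4$) against the Taylor series of $Y$, thereby pushing the right-hand side to enough extra decay to land strictly inside the Fredholm window, and only then applies the isomorphism to the remainder. In your sketch the indicial iteration appears only as a post-hoc regularity upgrade, but without extra decay on $Y$ it is needed for existence itself. (In every application the paper actually makes, $Y=-\Div_{\lambda_\epsilon}\mu_\epsilon$ is compactly supported, so $Y\in C^{k,\alpha}_\delta(M)$ for all $\delta$ and the subtlety disappears.) A parallel gap occurs in part~(\ref{SolveGenericLich}): the forcing term $\tfrac18(R[g]+6)-A$ is only $O(\rho)$ with no quantitative smallness, so a bare contraction in a ``small ball'' need not close; the cited references obtain existence of $\phi$ via sub-- and supersolutions (exploiting the monotonicity you mention for uniqueness) and establish polyhomogeneity afterward.
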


\subsection{Boundary regularity}\label{bdry:sec}
Even if $g$ is smoothly conformally compact and $f$ extends smoothly to $\bar M$, solutions to $\mathcal P[g]u = f$ may not extend smoothly to $\bar M$.
To understand why this is the case, and to understand those circumstances where $u$ {\it does} extend smoothly to $\bar M$, we examine more closely the relationship between $\mathcal{P}$ and its indicial map $I_s(\mathcal{P})$.
For a more general treatment of the subject the reader is referred to \cite{Mazzeo-Edge}; see also 
\cite{AnderssonChrusciel-Dissertationes}, 

In background coordinates $(\rho,\theta^1,\theta^2)=(\theta^0,\theta^1,\theta^2)$ near $\partial M$ (see \cite{WAH-Preliminary}, \cite{Lee-FredholmOperators}), we have 
\begin{equation*}
\mathcal P = a^{ij}(\rho\partial_i)(\rho\partial_j) + b^i(\rho\partial_i) + c, 
\end{equation*}
where the matrix-valued functions $a^{ij}$, $b^i$, and $c$ extend smoothly to $\rho=0$.
Computing in these coordinates one sees that 
$$I_s(\mathcal P) \bar u = \rho^{-s} \mathcal P (\rho^s \bar u)\big|_{\rho =0}=(\bar a^{\rho\rho} s^2
+ \bar b^\rho s+ \bar c) \bar u,
$$
where $\bar a^{\rho\rho} = a^{\rho\rho}\big|_{\rho =0}$, $\bar b^\rho=  b^\rho\big|_{\rho =0}$, and $\bar c = c\big|_{\rho =0}$ are smooth (matrix-valued) functions of $(\theta^1,\theta^2)$.

As in \cite{Mazzeo-Edge}, we define the \Defn{indicial operator} $I(\mathcal P)$ to be the unique dilation-invariant operator on $\partial M\times (0,\infty)$ such that
\begin{equation*}
I(\mathcal P) (\rho^s \bar u) = \rho^{s}I_s(\mathcal P)\bar u
\end{equation*}
for all smooth sections $\bar u$ of $E\big|_{\partial M}$.
Thus
\begin{equation}
\label{IndicialFull}
I(\mathcal P)(\rho^s (\log\rho)^p\bar u)
=
\sum_{k=0}^p \binom{p}{k}\rho^s (\log\rho)^{p-k} I_s^{(k)}(\mathcal P)\bar u,
\end{equation}
where $I_s^{(k)}(\mathcal P) = \frac{d^k}{ds^k}I_s(\mathcal P)$. In coordinates we have 
\begin{equation*}
I(\mathcal P) 
= \bar a^{\rho\rho}(\rho\partial_\rho)^2 
+ \bar b^\rho(\rho\partial_\rho) + \bar c,
\end{equation*}
with $\bar a^{\rho\rho}$, $\bar b^{\rho}$ and $\bar c$ as above. It should be noted that $I(\mathcal P)$ can be extended to a differential operator $\mathcal I(\mathcal P) u = I(\mathcal P)(\varphi u)$ on $M$ by means of a cut-off function $\varphi$ supported in a collar neighborhood of $\partial M$. We furthermore set $\mathcal R = \mathcal P - \mathcal I(\mathcal P)$.

Careful examinations of coordinate expressions for $\mathcal{P}$, $I(\mathcal P)$ and $\mathcal R$ yield the following: 

\begin{lemma}
\label{lemma:I-and-R}
Suppose $(M,g)$ is a smoothly conformally compact asymptotically hyperbolic manifold and that $\mathcal P$ satisfies Assumption \ref{Assume-P}.
Then for any $\delta\in \mathbb R$ we have
\begin{enumerate}
\item $\mathcal I(\mathcal P)\colon \mathcal B^\phg_\delta(M) \to \mathcal B^\phg_\delta(M)$ and
\item $\mathcal R\colon \mathcal B^\phg_\delta(M) \to \mathcal B^\phg_{\delta+1}(M)$.
\end{enumerate}
\end{lemma}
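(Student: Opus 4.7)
My strategy is to compute $\mathcal I(\mathcal P)$ and $\mathcal R$ on the building blocks $\rho^s(\log\rho)^p\bar u$ (where $\bar u$ is a smooth section of $E|_{\partial M}$ extended to a collar of $\partial M$ by parallel transport along $\grad_{\bar h}\rho$) and then resum using the polyhomogeneous expansion of $u\in\mathcal B^\phg_\delta(M)$. In coordinates where $\bar h = \D\rho\otimes \D\rho+\bar b$, the coordinate expression $\mathcal P = a^{ij}(\rho\partial_i)(\rho\partial_j)+b^i(\rho\partial_i)+c$ and the formula $I(\mathcal P) = \bar a^{\rho\rho}(\rho\partial_\rho)^2 + \bar b^\rho(\rho\partial_\rho)+\bar c$ show that $\mathcal R = \mathcal P - \mathcal I(\mathcal P)$ decomposes into two kinds of terms: those carrying at least one angular derivative $\rho\partial_{\theta^j}$, and coefficient-difference terms such as $(a^{\rho\rho}-\bar a^{\rho\rho})(\rho\partial_\rho)^2$ whose coefficient vanishes at $\partial M$.

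The two key building-block computations are then as follows. Formula \eqref{IndicialFull} for $\mathcal I(\mathcal P)$ shows that the $\rho$-exponent is preserved and log-exponents do not increase. For $\mathcal R$, I would exploit the product structure of $\bar h$: a direct computation shows $\Gamma^\rho_{\alpha\beta} = 0$ and $\Gamma^i_{\rho j} = 0$ in these coordinates, so $\bar\nabla_{\partial_{\theta^j}}\bar u$ is itself parallel-transported along $\grad_{\bar h}\rho$, and therefore $(\rho\partial_{\theta^j})\bigl(\rho^s(\log\rho)^p\bar u\bigr) = \rho^{s+1}(\log\rho)^p\bar\nabla_{\partial_{\theta^j}}\bar u$. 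Combined with the Taylor expansion $a^{\rho\rho}-\bar a^{\rho\rho} = \rho a_1+\rho^2 a_2+\cdots$ of each coefficient difference, this forces every term in $\mathcal R\bigl(\rho^s(\log\rho)^p\bar u\bigr)$ to be polyhomogeneous with $\rho$-exponent at least $s+1$.

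Applying these two identities term-by-term to the defining expansion $u\sim \sum_i\sum_p \rho^{s_i-r}(\log\rho)^p \bar u_{ip}$, I would obtain polyhomogeneous expansions for $\mathcal I(\mathcal P)u$ (with unchanged leading exponent, hence in $\mathcal B^\phg_\delta$) and for $\mathcal R u$ (with leading exponent shifted up by at least one, hence in $\mathcal B^\phg_{\delta+1}$). The cutoff $\varphi$ in $\mathcal I(\mathcal P)u = I(\mathcal P)(\varphi u)$ only contributes smooth pieces compactly supported away from $\partial M$, which lie in every $\mathcal B_\delta(M)$ and so do not affect the conclusion. To verify the $\mathcal B_k$-remainder condition in the definition of polyhomogeneity, I would fix $k$, pick $N_k$ large enough that the tail of $u$'s expansion is in $\mathcal B_k(M)$, and appeal to the direct estimates $\mathcal I(\mathcal P)\colon \mathcal B_k(M)\to \mathcal B_k(M)$ and $\mathcal R\colon \mathcal B_k(M)\to\mathcal B_{k+1}(M)$, which follow from the coordinate expressions combined with Lemma \ref{BoundaryLemma} used to absorb the extra factor of $\rho$ produced by each angular derivative or coefficient difference into a decay-weight shift.

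The main obstacle I anticipate is bookkeeping: ensuring that the doubly-indexed termwise computation assembles into a single genuinely polyhomogeneous expansion with exponents arranged in proper nondecreasing real-part order and log-powers appropriately bounded. This reduces to a routine rearrangement, since each $\mathcal R$-term at input exponent $s_i$ produces only contributions at exponents $s_i+1, s_i+2,\ldots$, which interleave cleanly with the original sequence $\{s_i\}$ (using that $\Re(s_i)\to+\infty$).
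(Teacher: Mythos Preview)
The paper does not give a detailed proof of this lemma; it simply asserts that the conclusions ``follow from careful examinations of coordinate expressions for $\mathcal P$, $I(\mathcal P)$ and $\mathcal R$.'' Your proposal is precisely such a careful examination, so your approach is the one the paper has in mind.

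One minor overclaim to flag: in your final paragraph you assert $\mathcal R\colon \mathcal B_k(M)\to\mathcal B_{k+1}(M)$ on arbitrary tails. The gain of one $\rho$-power that you correctly identify for the angular-derivative terms of $\mathcal R$ is a feature of the building blocks $\rho^s(\log\rho)^p\bar u$ (because $\rho^s(\log\rho)^p$ is constant in $\theta$), not an abstract mapping property on $\mathcal B_k$; for a general element of $\mathcal B_k$ the operator $\rho\partial_{\theta^j}$ need not improve the weight. Fortunately you do not need this stronger claim: the evident bound $\mathcal R\colon\mathcal B_k(M)\to\mathcal B_k(M)$ (immediate since both $\mathcal P$ and $\mathcal I(\mathcal P)$ are uniformly degenerate of order two with smooth coefficients) is enough to control the remainder in the polyhomogeneity verification, while the shift from $\mathcal B^\phg_\delta$ to $\mathcal B^\phg_{\delta+1}$ comes entirely from your termwise computation on the building blocks. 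With that adjustment, your argument is complete.
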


This  lemma can be interpreted as saying that $\mathcal I(\mathcal P)$ is an  approximation of $\mathcal{P}$ near $\partial M$. 
It is crucial to notice that $I(\mathcal P)$ is an operator of Cauchy-Euler type. The method advertised in entry-level courses for solving a constant coefficient Cauchy-Euler ODE such as 
\begin{equation}\label{Math235}
\bar a(\rho\partial_\rho)^2 u
+ \bar b(\rho\partial_\rho)u + \bar c u=f
\end{equation}
involves studying the roots $s_1$ and $s_2$ of the associated characteristic polynomial equation
$$\bar a s^2 + \bar b s + \bar c =0.$$
In the PDE setting, this corresponds to a study of characteristic exponents as defined in \S\ref{DiffOpSec}. 

Typical solutions to the ODE \eqref{Math235} have expansions in terms of powers of $\rho$, where the exponents present are the same as the exponents in the expansion of $f$, as well as the roots $s_i$.
However, when the expansion of $f$ includes $\rho^{s_i}$, we have a resonance  that leads to the presence of terms of the form $\rho^{s_i} \log\rho$ in the expansion of the solution $u$. 
Further resonances arise when $s_1=s_2$, in which case the two homogeneous solutions are $\rho^{s_1}$ and $\rho^{s_1}\log\rho$. 

The situation in the case of a  (self-adjoint, geometric, elliptic) PDE  in asymptotically hyperbolic setting is extremely similar to the ODE case.  
We now present conditions which ensure that no resonances, and thus no log terms, occur.
The proofs presented below are inspired by computations done in \cite{AnderssonChrusciel-Obstructions}.

\begin{proposition}
\label{prop:ForcingSmoothness}
Let $(M,g)$ be an asymptotically hyperbolic manifold that is smoothly conformally compact.
Suppose $\mathcal P = \mathcal P[g]$ acts on tensors of weight $r$ and satisfies Assumption \ref{Assume-P}, and let $\mu$ denote the maximum real part of the characteristic exponents of $\mathcal{P}$.
If $u\in \mathcal B_\phg(M)$ is such that 
\begin{enumerate}
\item $\mathcal P u$ extends to a tensor field in $C^\infty(\bar M)$, and
\item there exists $\delta >\mu$ such that $u\in C^\infty(\bar M)+\mathcal B_{\delta+r}^\phg(M)$,
\end{enumerate}
then $u$ extends to a tensor field in $C^\infty(\bar M)$.
\end{proposition}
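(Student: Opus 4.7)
The plan is to reduce the proof to a term-by-term analysis of the polyhomogeneous expansion, exploiting invertibility of the indicial map $I_s(\mathcal{P})$ whenever $\Re(s)$ exceeds the indicial threshold $\mu$. Starting from the given decomposition $u = f_\infty + w$ with $w \in \mathcal{B}^\phg_{\delta+r}(M)$, I would first invoke Borel's lemma to produce an auxiliary element $\tilde u \in C^\infty(\bar M)$ whose boundary Taylor expansion agrees with the smooth-type terms $\rho^n \bar w_{n,0}$ (those with $s_i - r = n \in \mathbb{N}_0$ and no $\log$ factor) appearing in the polyhomogeneous expansion of $w$, and absorb $\tilde u$ into $f_\infty$. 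This yields a refined decomposition $u = u_\infty + v$ where $u_\infty \in C^\infty(\bar M)$, $v \in \mathcal{B}^\phg_{\delta+r}(M)$, and every term $\rho^{s_i-r}(\log\rho)^p \bar v_{ip}$ in the polyhomogeneous expansion of $v$ is non-smooth, i.e.~$s_i - r \notin \mathbb{N}_0$ or $p > 0$. Because $\mathcal{P}$ is geometric with coefficients smooth on $\bar M$ and each vector field $\rho\partial_i$ preserves $C^\infty(\bar M)$, we have $\mathcal{P}u_\infty \in C^\infty(\bar M)$, so $\mathcal{P}v = \mathcal{P}u - \mathcal{P}u_\infty$ is smooth on $\bar M$.

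I would then turn to the leading term of $v$. Writing $\mathcal{P} = \mathcal{I}(\mathcal{P}) + \mathcal{R}$, Lemma \ref{lemma:I-and-R} places $\mathcal{R}v$ in $\mathcal{B}^\phg_{\delta+r+1}(M)$, so it contributes nothing at the leading $\rho$-order of $v$; formula \eqref{IndicialFull} then identifies the top-log-order coefficient of $\mathcal{I}(\mathcal{P})v$ at order $\rho^{s_0-r}(\log\rho)^{p_0}$ as $I_{s_0-r}(\mathcal{P})\bar v_{0,p_0}$. Since $\rho^{s_0-r}(\log\rho)^{p_0}$ is non-smooth and $\mathcal{P}v$ is smooth, uniqueness of polyhomogeneous expansions (Remark \ref{PhgSplitting}(2)) forces $I_{s_0-r}(\mathcal{P})\bar v_{0,p_0} = 0$ on $\partial M$. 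The hypothesis $\delta > \mu$ combined with $\Re(s_0) \geq \delta+r$ gives $\Re(s_0-r) \geq \delta > \mu$, placing $s_0 - r$ outside the characteristic set $\mathcal{E}$ and hence making $I_{s_0-r}(\mathcal{P})$ pointwise invertible; so $\bar v_{0,p_0} = 0$.

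Finally, I would iterate this analysis in lexicographic order on the indices $(i,p)$: at each stage the already-vanished coefficients annihilate the Leibniz-type cross terms $I_{s_i-r}^{(k)}(\mathcal{P})\bar v_{i,p-k}$ appearing in \eqref{IndicialFull}, leaving $I_{s_i-r}(\mathcal{P})\bar v_{i,p}$ as the dominant contribution at level $\rho^{s_i-r}(\log\rho)^p$; since $\Re(s_i-r) \geq \Re(s_0-r) > \mu$ for every $i$, invertibility again forces $\bar v_{i,p} = 0$. Hence the polyhomogeneous expansion of $v$ is trivial, so $v \in \mathcal{B}^\phg_k(M)$ for every $k$, and Remark \ref{PhgSplitting}(4) gives $v \in C^\infty(\bar M)$; thus $u = u_\infty + v \in C^\infty(\bar M)$. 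The principal difficulty lies in the bookkeeping of this iteration — verifying that the Leibniz-type sums in \eqref{IndicialFull} really do collapse once the relevant higher log-orders have been eliminated, so that at each step the argument reduces cleanly to inverting $I_{s_i-r}(\mathcal{P})$ applied to a single boundary coefficient.
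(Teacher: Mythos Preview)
Your approach is correct and shares its core with the paper's proof: both use invertibility of $I_{s_i-r}(\mathcal{P})$ for $\Re(s_i-r) > \mu$ to kill non-smooth coefficients, iterating level by level in $\Re(s_i)$. The paper's induction on the real-part levels $\delta_j$ isolates at each step the packet $w_j$ of terms at that level and shows it is smooth (integer exponent, no $\log$), absorbing it incrementally into a growing $C^\infty(\bar M)$ piece $\bar u_j$. You instead invoke Borel's lemma once at the outset to strip off all smooth-type terms, leaving a remainder $v$ with only non-smooth terms in its expansion, and then argue that every such coefficient vanishes. This front-loaded separation is a clean variant and yields the slightly stronger intermediate statement that $\mathcal{P}v$ itself is smooth on $\bar M$; the cost is the extra appeal to Borel, which the paper avoids by adding finitely many smooth terms per step. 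Two small corrections: the Leibniz-type cross terms in your iteration should involve $\bar v_{i,q}$ for $q > p$ (not $\bar v_{i,p-k}$), since formula \eqref{IndicialFull} shows that the coefficient at log-order $p$ receives contributions from the \emph{higher} log-orders; and as you pass to higher $\Re(s_i)$ you must re-apply Lemma \ref{lemma:I-and-R} to the updated $v$ (with the already-vanished lower-level coefficients removed) so that $\mathcal{R}v$ again fails to contribute at the current level---this is precisely the level-by-level structure the paper makes explicit and is the substance of the bookkeeping you flag.
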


\begin{proof}
Without loss of generality we may assume $u\in \mathcal B_{\delta+r}^\phg(M)$.
By Remark \ref{PhgSplitting} we may then assume that $u$  has polyhomogeneous expansion \eqref{GenericTensorExpansion} with  $\Re{(s_i - r)} >\mu$ for all $i$.
Let $\{\delta_j\}_{j=0}^\infty$ be the strictly increasing sequence listing the elements of $\Re\{s_i\}$. It suffices to show that for each $j\in \mathbb{N}_0$ there exists $\bar u_j \in C^\infty(\bar M)$ and $u_j \in \mathcal B^\phg_{\delta_j}(M)$ such that $u = \bar u_j + u_j$; we do so inductively. 

When $j=0$ there is nothing to prove as we may set $\bar u_0 =0$.
Thus we assume for some $j\geq 0$ that $u = \bar u_j + u_j$ as above.
Let
\begin{equation*}
w_j = \sum_{\Re{(s_i)}= \delta_j} \sum_{p=0}^{p_i} \rho^{s_i-r} (\log\rho)^p \bar u_{ip} 
\end{equation*}
and define $u_{j+1} = u_j - w_j$; note that $u_{j+1} \in \mathcal B^\phg_{\delta_{j+1}}(M)$ as desired. From Lemma \ref{lemma:I-and-R} we have that 
\begin{equation}
\label{IndicialAux}
\mathcal I(\mathcal P) w_j \in C^\infty(\bar M) + \mathcal B^\phg_{\delta^\prime}(M), \quad \delta^\prime >\delta_j.
\end{equation}
On the other hand, a direct computation in a collar neighborhood of the boundary $\partial M$ shows that 
$$
\mathcal I(\mathcal P) w_j
= \sum_{\Re{(s_i)}= \delta_j} \sum_{p=0}^{p_i} \rho^{s_i-r} (\log\rho)^p\bar w_{ip},
$$ 
where by \eqref{IndicialFull} we have 
\begin{equation}\label{IndicialNotSoFull}
\bar w_{ip_i}=I_{s_i-r}(\mathcal P) \bar u_{ip_i},\quad 
\bar w_{i(p_i-1)}=I_{s_i-r}(\mathcal P) \bar u_{i(p_i-1)}+p_i I_{s_i-r}^{(1)}(\mathcal P) \bar u_{ip_i},
\end{equation}
etc.

In view of Remark \ref{PhgSplitting} it follows that each exponent $s_i-r$ in the expansion of $w_j$ is a non-negative integer and that $I_{s_i-r}(\mathcal P) \bar u_{ip_i}=\bar w_{ip_i}= 0$ whenever $p_i\neq 0$. However, since $\Re{(s_i - r)} >\mu$ we can only have $I_{s_i-r}(\mathcal P) \bar u_{ip_i}= 0$ if $\bar u_{ip_i}=0$. Thus $p_i=0$, and the proof of our induction step is complete.
\end{proof}

For simplicity, we now restrict attention to a special class of operators, which includes those arising in the conformal method.

\setcounter{assumption}{11}
\begin{assumption}
\label{Assume-L}
Suppose $(M,g)$ is an asymptotically hyperbolic manifold, and that $\mathcal P=\mathcal P[g]$ is a geometric operator acting on sections of tensor bundle $E$ and satisfying Assumption \ref{Assume-P}.
We furthermore assume that the indicial operator $I_s(\mathcal P)$ is a product of a polynomial $p(s)$ and an isomorphism of $E\big|_{\partial M}$, where $p(s)$ has simple integer roots.
\end{assumption}

\begin{proposition}
\label{prop:Linear-L-Smooth}
Let $(M,g)$ be an asymptotically hyperbolic manifold that is smoothly conformally compact.
Suppose $\mathcal P = \mathcal P[g]$ acts on tensor field of weight $r$ and satisfies Assumption \ref{Assume-L}.
Let $\mu$ denote the highest characteristic exponent of $\mathcal{P}$.
If $u\in \mathcal B^\phg_{\mu+r}(M)$ satisfies $\mathcal P u \in C^\infty(\bar M)$ and $\mathcal Pu \in \mathcal B_{\delta+r}(M)$ for some $\delta>\mu\ge 0$, then $u$ extends to a smooth tensor field on $\bar M$.
\end{proposition}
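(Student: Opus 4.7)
The plan is to upgrade the hypothesis $u\in \mathcal{B}^\phg_{\mu+r}(M)$ to the stronger form required by Proposition~\ref{prop:ForcingSmoothness}, namely $u \in C^\infty(\bar M) + \mathcal{B}^\phg_{\delta'+r}(M)$ for some $\delta' > \mu$. Once that is established, Proposition~\ref{prop:ForcingSmoothness} directly delivers $u \in C^\infty(\bar M)$, so all of the work lies in analyzing the contribution to the polyhomogeneous expansion of $u$ at the resonance level $\mu$ and showing that contribution is in fact smooth.

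First I would split $u = w + v$, where $w$ gathers precisely those terms $\rho^{s_i-r}(\log\rho)^p \bar{u}_{ip}$ of the expansion with $\Re(s_i - r) = \mu$, and $v\in \mathcal{B}^\phg_{\delta_1+r}(M)$ for some $\delta_1 > \mu$ (the next real part appearing in the expansion). Writing $\mathcal{P} = \mathcal{I}(\mathcal{P}) + \mathcal{R}$ as in Lemma~\ref{lemma:I-and-R}, that lemma combined with the hypothesis $\mathcal{P}u \in \mathcal{B}_{\delta+r}(M)$ with $\delta>\mu$ yields
\begin{equation*}
\mathcal{I}(\mathcal{P})w \;=\; \mathcal{P}u - \mathcal{R}u - \mathcal{I}(\mathcal{P})v \;\in\; \mathcal{B}_{\delta_2+r}(M)
\end{equation*}
for some $\delta_2 > \mu$.

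Next, Assumption~\ref{Assume-L} gives $I_s(\mathcal{P}) = p(s)A$ with $A$ an isomorphism of $E|_{\partial M}$ and $p(s)$ a polynomial with simple integer roots, so that formula~\eqref{IndicialFull} specializes to
\begin{equation*}
\mathcal{I}(\mathcal{P})\bigl(\rho^s(\log\rho)^p \bar{u}\bigr) \;=\; \rho^s A\bar{u}\sum_{k=0}^p \binom{p}{k}(\log\rho)^{p-k} p^{(k)}(s).
\end{equation*}
Thus $\mathcal{I}(\mathcal{P})w$ is a finite polyhomogeneous sum all of whose $\rho$-exponents have real part equal to $\mu$, and Lemma~\ref{LinInd} forces every coefficient of this sum to vanish individually. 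The remaining argument is a combinatorial induction on log-powers driven by the simplicity of the root at $\mu$. For each $s_i$ with $\Re(s_i-r)=\mu$ but $s_i-r\neq \mu$, the scalar $p(s_i-r)$ is nonzero, so inducting on $p$ from $p_i$ downward kills every $\bar{u}_{ip}$. For the unique index $i^*$ (if any) with $s_{i^*}-r=\mu$, instead $p(\mu)=0$ while $p'(\mu)\neq 0$; the $k=0$ term in the indicial expansion drops out and $\mathcal{I}(\mathcal{P})$ shifts the top log-power down by one, so a parallel induction kills every $\bar{u}_{i^*,p}$ with $p\geq 1$, leaving only $w = \rho^\mu \bar{u}_{i^*,0}$.

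Since $\mu$ is a non-negative integer (it is a root of the polynomial $p$) and $\bar{u}_{i^*,0}$ extends smoothly to $\bar M$ by parallel transport, the surviving term $\rho^\mu \bar{u}_{i^*,0}$ is smooth on $\bar M$. Hence $u \in C^\infty(\bar M)+\mathcal{B}^\phg_{\delta_1+r}(M)$ with $\delta_1>\mu$, and Proposition~\ref{prop:ForcingSmoothness} applies. I expect the main obstacle to be the combinatorial induction above: it is the precise place at which the simplicity of the integer roots of $p$ in Assumption~\ref{Assume-L} becomes essential, and it is what prevents any logarithmic term from surviving at the resonance level $\mu$.
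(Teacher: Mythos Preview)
Your proposal is correct and follows essentially the same route as the paper's proof: isolate the level-$\mu$ part $w$ of the expansion, use Lemma~\ref{lemma:I-and-R} together with the decay hypothesis on $\mathcal P u$ to force $\mathcal I(\mathcal P)w$ into a strictly higher-weight space, invoke Lemma~\ref{LinInd} to kill the resulting coefficients, and then appeal to Proposition~\ref{prop:ForcingSmoothness}. The paper's version compresses your induction on log-powers into a single two-line argument using only the top two coefficients from \eqref{IndicialNotSoFull}, but the underlying mechanism---simplicity of the root $\mu$ forcing $p_i=0$, and invertibility of $I_{s_i-r}(\mathcal P)$ away from characteristic exponents forcing $s_i-r=\mu$---is identical to what you spell out.
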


\begin{proof}
Since $u\in \mathcal B^\phg_{\mu+r}(M)$, it admits an expansion  \eqref{GenericTensorExpansion} with $\Re{(s_i-r)}\geq \mu$.
Let
\begin{equation}\label{rndwexpansion}
w = \sum_{\Re{(s_i-r)}= \mu} \sum_{p=0}^{p_i} \rho^{s_i-r} (\log\rho)^p \bar u_{ip}.
\end{equation}
From Lemma \ref{lemma:I-and-R} we have 
$$\mathcal I(\mathcal P) w \in \mathcal B^\phg_{\delta^\prime+r}(M) \text{\ \ for some\ \ }\delta^\prime >\mu.$$
The computation \eqref{IndicialNotSoFull} and Remark \ref{PhgSplitting} now imply that $I_{s_i-r}(\mathcal P) \bar u_{ip_i}=0$
and, if $p_i\ge 1$, that
\begin{equation}\label{moreaux}
I_{s_i-r}(\mathcal P) \bar u_{i(p_i-1)}+p_i I_{s_i-r}^{(1)}(\mathcal P) \bar u_{ip_i}=0.
\end{equation}
Therefore, the only non-vanishing term in the expansion \eqref{rndwexpansion} has to correspond to $s_i-r=\mu$ which, by our assumptions, is a nonnegative integer. Furthermore, we must have $p_i=0$ because otherwise \eqref{moreaux} contradicts Assumption \ref{Assume-L}. Thus $w$ extends smoothly to $\bar M$ and our result is now immediate from Proposition \ref{prop:ForcingSmoothness}.
\end{proof}

We conclude this section with a regularity result for semilinear scalar equations of the form $\mathcal P u = f(u)$, where $f$ satisfies the following.

\setcounter{assumption}{5}
\begin{assumption}
\label{Assume-F}
We assume that $f$ is a smooth real function on $M\times I$ where $0\in I$ is an open interval. Furthermore, we assume that on a neighborhood of zero $f$ is given by an absolutely and uniformly convergent power series 
\begin{equation*}
f(x,u) = \sum_{l=0}^\infty a_l(x) u^l
\end{equation*}
with functions $a_0, a_1 \in \rho C^\infty(\bar M)$ and $a_l\in C^\infty(\bar M)$ for $l\geq 2$.
\end{assumption}
\noindent
In what follows we simply write $f(u)$ for $f(\cdot, u(\cdot))$.

\begin{remark}
\label{fcondition}
If $u\in \rho C^\infty(\bar M)+\mathcal{B}^\phg_\delta(M)$ with $\delta>1$, and if $f$ satisfies Assumption \ref{Assume-F} then $f(u)\in \rho C^\infty(\bar M)+\mathcal{B}^\phg_{\delta+1}(M)$.
\end{remark}

\begin{proposition}
\label{prop:SemilinearSmoothness}
Let $(M,g)$ be an asymptotically hyperbolic manifold that is smoothly conformally compact and suppose $\mathcal P = \mathcal P[g]$ is an elliptic operator acting on functions and satisfying Assumption \ref{Assume-L}.
Let $\mu$ denote the largest characteristic exponent of $\mathcal P$. 
Furthermore, let $f$ be a function satisfying Assumption \ref{Assume-F}. 

Suppose that $\mathcal P u = f(u)$, where $u \in \mathcal B^\phg_\mu(M)$ and $f(u) \in \mathcal B_\delta(M)$ for some $\delta>\mu$.
Then $u$ extends to a function in $C^\infty(\bar M)$.
\end{proposition}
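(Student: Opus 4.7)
The plan is to adapt the inductive structure of the proof of Proposition \ref{prop:Linear-L-Smooth} to the semilinear setting, using Remark \ref{fcondition} at each step to recycle information about $u$ into information about $f(u) = \mathcal P u$. Let $\{\delta_j\}_{j\geq 0}$ denote the strictly increasing sequence of distinct real parts of the exponents appearing in the polyhomogeneous expansion of $u$, so that $\delta_0 \geq \mu$. I will show by induction on $j$ that $u$ admits a decomposition $u = \bar u_j + u_j$ in which $\bar u_j \in \rho C^\infty(\bar M)$ and $u_j \in \mathcal B^\phg_{\delta_j}(M)$; since $\delta_j \to \infty$, the last part of Remark \ref{PhgSplitting} then delivers $u \in C^\infty(\bar M)$.

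The base case $j=0$ is trivial with $\bar u_0 = 0$ and $u_0 = u$. For the inductive step, assume the decomposition at level $j$ and extract the leading contribution
\[
w_j = \sum_{\Re(s_i) = \delta_j} \sum_{p=0}^{p_i} \rho^{s_i}(\log \rho)^p \bar u_{ip},
\]
so that $u_{j+1} := u_j - w_j \in \mathcal B^\phg_{\delta_{j+1}}(M)$; the task reduces to showing that $w_j$ extends smoothly to $\bar M$ and vanishes on $\partial M$. Subtracting the smooth term $\mathcal P \bar u_j$ from both sides of $\mathcal P u = f(u)$ and invoking Remark \ref{fcondition} for $u = \bar u_j + u_j \in \rho C^\infty(\bar M) + \mathcal B^\phg_{\delta_j}(M)$ yields
\[
\mathcal P u_j \in C^\infty(\bar M) + \mathcal B^\phg_{\delta_j + 1}(M).
\]
On the other hand, writing $\mathcal P u_j = \mathcal I(\mathcal P) w_j + \mathcal R w_j + \mathcal P u_{j+1}$ and using Lemma \ref{lemma:I-and-R} places the last two terms in $\mathcal B^\phg_{\varepsilon}(M)$ for some $\varepsilon > \delta_j$. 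Comparing expressions, the order-$\delta_j$ terms of $\mathcal I(\mathcal P) w_j$, explicitly computed via \eqref{IndicialNotSoFull}, must arise entirely from the $C^\infty(\bar M)$ piece on the right-hand side, hence carry nonnegative integer exponent and no logarithmic factor. Assumption \ref{Assume-L} together with the recursion \eqref{moreaux} then force each $p_i = 0$ and each $s_i$ to be a nonnegative integer. Since $\delta_j \geq \mu > 0$, this gives $w_j \in \rho C^\infty(\bar M)$, completing the induction.

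The main technical obstacle is the hypothesis $\delta > 1$ in Remark \ref{fcondition}, needed to dominate the quadratic and higher terms in the Taylor expansion of $f$ about $\bar u_j$. This hypothesis holds automatically throughout the induction once $\mu > 1$, as is the case for the operators arising in the conformal method on three-dimensional asymptotically hyperbolic manifolds; for smaller $\mu$ one first performs a preliminary bootstrap using the weaker inclusion $f(u) \in \rho C^\infty(\bar M) + \mathcal B^\phg_{\min(\delta_j + 1,\, 2\delta_j)}(M)$, derivable from the same Taylor expansion argument as Remark \ref{fcondition}, until the decay order of $u_j$ exceeds $1$, after which the main induction takes over.
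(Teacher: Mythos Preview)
Your induction conflates two distinct mechanisms, and this causes a genuine gap at the leading level $\delta_0=\mu$. For $\delta_j>\mu$ the indicial map $I_{s_i}(\mathcal P)$ is invertible at every relevant $s_i$, so from $\bar w_{ip_i}=I_{s_i}(\mathcal P)\bar u_{ip_i}=0$ (no log in the smooth piece) you may conclude $\bar u_{ip_i}=0$; this is the argument of Proposition~\ref{prop:ForcingSmoothness}, and it tolerates a $C^\infty(\bar M)$ contribution on the right-hand side. At $\delta_0=\mu$, however, $I_\mu(\mathcal P)=0$, and then $I(\mathcal P)(\rho^\mu(\log\rho)\,\bar u)=\rho^\mu\,I_\mu^{(1)}(\mathcal P)\bar u$ carries \emph{no} logarithm. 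Thus a nonzero $\rho^\mu$-coefficient in $\mathcal I(\mathcal P)w_0$ --- which is exactly what the smooth piece from Remark~\ref{fcondition} can produce --- is perfectly consistent with a surviving $\rho^\mu\log\rho$ term in $w_0$. Your appeal to \eqref{moreaux} tacitly assumes $\bar w_{i(p_i-1)}=0$, which is not forced here.

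The missing ingredient is the hypothesis $f(u)\in\mathcal B_\delta(M)$ with $\delta>\mu$, which you never invoke. The paper uses it precisely at this first step: since $\mathcal P u=f(u)\in\mathcal B_\delta(M)$ has \emph{no} smooth piece, Lemma~\ref{lemma:I-and-R} gives $\mathcal I(\mathcal P)w\in\mathcal B^\phg_{\delta'}(M)$ for some $\delta'>\mu$, and the argument of Proposition~\ref{prop:Linear-L-Smooth} then yields $w=\rho^\mu\bar u_{\mu 0}$. Only after peeling off this term does the paper run the inductive scheme via Remark~\ref{fcondition}, now safely in the regime $\delta_j>\mu$. Incidentally, the paper also notes that self-adjointness forces the characteristic exponents to be symmetric about $\Re(s)=1$, whence $\mu\geq 1$; once the leading term is removed the remainder lies in $\mathcal B^\phg_{\delta''}(M)$ with $\delta''>\mu\geq 1$, so the $\delta>1$ requirement of Remark~\ref{fcondition} is met automatically and your proposed bootstrap is unnecessary.
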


\begin{proof}
Since $u\in \mathcal B^\phg_{\mu}(M)$, it admits an expansion  \eqref{GenericTensorExpansion} with $\Re{(s_i)}\geq \mu$. Note that $\mu \geq 1$, as a consequence of the fact that the set of the characteristic exponents of $\mathcal{P}$ is symmetric about $\Re{(s)}=1$ (cf.~Corollary 4.5 in \cite{Lee-FredholmOperators}). Furthermore, by Assumption \ref{Assume-L} we have that $\mu$ is an integer.

As in the proof of Proposition \ref{prop:Linear-L-Smooth} we consider the function
\begin{equation*}
w = \sum_{\Re{(s_i)}= \mu} \sum_{p=0}^{p_i} \rho^{s_i} (\log\rho)^p \bar u_{ip}.
\end{equation*}
From Lemma \ref{lemma:I-and-R} and the assumption that $f(u) \in \mathcal B_\delta(M)$ for some $\delta>\mu$ we have 
$$
I(\mathcal P)w\in \mathcal B^\phg_{\delta'}(M) 
\quad\text{ for some }\quad
 \delta' >\mu.
 $$
Arguing as in the proof of Proposition \ref{prop:Linear-L-Smooth} we obtain $s_i=\mu$ and $p_i=0$ for all $i$ in the above expression for $w$. Thus 
$$
u = \rho^\mu \bar u_{\mu 0} + v\in \rho C^\infty(\bar M)+\mathcal B^\phg_{\delta''}(M) 
\quad \text{ for some }\quad
\delta'' >\mu.
$$

It remains to establish smoothness of the function $v$. We do so by using the inductive argument from the proof of Proposition \ref{prop:ForcingSmoothness} to  show that for each $j$ there exist $\bar v_j \in \rho C^\infty(\bar M)$ and $v_j \in \mathcal B_{\delta_j}^\phg(M)$ such that $v = \bar v_j + v_j$. 
The inductive step relies on 
$$\mathcal I(\mathcal P) v_j = f(\bar u + \bar v_j + v_j) - \mathcal P\bar u - \mathcal P \bar v_j - \mathcal R v_j\in \rho C^\infty(\bar M)+\mathcal B^\phg_{\delta_j+1}(M),$$
which in turn is a consequence of Remark \ref{fcondition}.
\end{proof}

\section{The free data}
\label{FreeData}
We now commence the proof of Theorem \ref{Density}, and assume that $(g,K)$ is a polyhomogeneous constant-mean-curvature asymptotically hyperbolic initial data set.

In this section we construct a family of free data $(\lambda_\epsilon, \mu_\epsilon)$, and subsequently establish several estimates for geometric quantities and differential operators associated to the family of metrics $\lambda_\epsilon$.
It is important that these estimates are uniform in $\epsilon>0$, in order that they lead to the convergence portion of Theorem \ref{Density}.
It is our convention that, unless otherwise stated, all constants are independent of $\epsilon$, provided $\epsilon$ is sufficiently small.

\subsection{Construction of the free data}
In order to construct a family of free data, we define a family of smooth cutoff functions.
Let $\chi\colon \mathbb R \to [0,1]$ be a smooth, decreasing function such that
\begin{equation*}
\chi(x) = 1\text{ if }x\geq 2
\quad \text{ and }\quad
\chi(x) = 0 \text{ if } x \leq 1.
\end{equation*}
For $\epsilon \in (0,1)$ define $\chi_\epsilon \colon M \to [0,1]$ by
$
\chi_\epsilon = \chi( { \rho}/{\epsilon}).
$
We note that 
$\supp \chi_\epsilon \subset \{\rho>\epsilon \}$ and that $\chi_\epsilon =1$ if $\rho\geq2\epsilon$.
Furthermore, $\D\chi_\epsilon = \chi^\prime(\rho/\epsilon) \epsilon^{-1}\D\rho$ is supported in $\{\epsilon \leq \rho \leq 2\epsilon\}$.
Thus,  since $\D\rho\in C^{k,\alpha}_1(M)$ for all $k\geq 1$ and $\alpha\in (0,1)$, we see that $\chi_\epsilon \in C^{k,\alpha}(M)$, with bound independent of $\epsilon$:
\begin{equation}
\label{chistuff}
\|\chi_\epsilon\|_{C^{k,\alpha}(M)}\leq C.
\end{equation}

Let $\bar b$ be the smooth metric induced on $\partial M$ by $\bar g = \rho^2 g$.
We define a preferred background metric $h$ by choosing $\bar h$ to be a smooth metric on $\bar M$ such that in a neighborhood of $\partial M$ we have 
\begin{equation}
\label{DefineBoundaryMetric}
\bar h = \D\rho \otimes \D\rho + \bar b
\end{equation}
and setting $h = \rho^{-2}\bar h$.
Let $\bar\nabla$ be the Levi-Civita connection associated to $\bar h$, and note that in the neighborhood of $\partial M$ where \eqref{DefineBoundaryMetric} holds we have $\Delta_{\bar h}\rho =0$.

We define, for sufficiently small $\epsilon>0$, the smooth metrics $\bar\lambda_\epsilon$ on $\bar M$ by 
\begin{equation}
\label{DefineLambdaBar}
\bar\lambda_\epsilon :=  \chi_\epsilon\, \bar g + (1-\chi_\epsilon) \bar h.
\end{equation}
Setting $\lambda_\epsilon = \rho^{-2}\bar\lambda_\epsilon$, we define the family of free data $(\lambda_\epsilon, \mu_\epsilon)$ by
\begin{equation*}
\lambda_\epsilon := \rho^{-2}\bar\lambda_\epsilon
\quad\text{ and }\quad
\mu_\epsilon:= \chi_\epsilon \Sigma = \chi_\epsilon \rho^{-1} \bar\Sigma.
\end{equation*}
We emphasize that $\lambda_\epsilon$ are each a smoothly conformally compact asymptotically hyperbolic metric on $M$.

\subsection{Estimates for $\lambda_\epsilon$}
We note the following properties of the metrics $\lambda_\epsilon$.
\begin{lemma}
\label{lemma:IntrinsicMetricEstimate}
Let $k\geq 0$ and $\alpha\in(0,1)$.  
We have
\begin{equation*}
\| g - \lambda_\epsilon\|_{C^{k,\alpha}_1(M)}\leq C
\quad
\text{ and }
\quad
\| g - \lambda_\epsilon\|_{C^{k,\alpha}(M)}\leq C\epsilon.
\end{equation*}
This furthermore implies that for sufficiently small $\epsilon>0$ we have
$\| g^{-1} - \lambda_\epsilon^{-1}\|_{C^{k,\alpha}(M)}\leq C\epsilon$. 
\end{lemma}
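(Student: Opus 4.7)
The approach starts from the identity
$$g - \lambda_\epsilon = (1 - \chi_\epsilon)(g - h),$$
which follows directly from $\lambda_\epsilon = \chi_\epsilon g + (1-\chi_\epsilon) h$. In particular, the difference is supported in $\{\rho \leq 2\epsilon\}$, so the problem reduces entirely to understanding the $\epsilon$-independent tensor $g - h$ together with the uniform behavior of $\chi_\epsilon$ recorded in \eqref{chistuff}.

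The key preliminary step is to establish $g - h \in C^{k,\alpha}_1(M)$ for every $k \geq 0$ and $\alpha \in (0,1)$. This uses that $\bar g \in C^2_\phg(\bar M)$ and $\bar h \in C^\infty(\bar M)$ are both asymptotically hyperbolic and that the preferred background $h$ has been chosen adapted to $g$, so that the tensor $\bar g - \bar h$ vanishes along $\partial M$. A direct conformal scaling for $(0,2)$-tensors gives $|g - h|_g = |\bar g - \bar h|_{\bar g}$, so the zero-th order bound $|g - h|_g = O(\rho)$ is immediate. Higher-order bounds in the weighted H\"older scale then follow by an inductive application of Lemma \ref{BoundaryLemma} to $\bar\nabla$-derivatives of $g - h$, with the polyhomogeneous structure of $\bar g$ ensuring that all such weighted derivatives remain controlled.

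Given this, the first estimate $\|g - \lambda_\epsilon\|_{C^{k,\alpha}_1(M)} \leq C$ follows from the product structure together with the uniform cutoff bound \eqref{chistuff} and the stability of $C^{k,\alpha}_1(M)$ under multiplication by $C^{k,\alpha}(M)$ functions. The second estimate then follows from the support property: on $\{\rho \leq 2\epsilon\}$ the weight-$1$ control provides a pointwise bound of order $\rho \leq 2\epsilon$, and higher derivatives work out the same way because the weighted derivatives $\rho\partial\chi_\epsilon$ are uniformly bounded, even though the raw derivatives $\partial\chi_\epsilon$ blow up pointwise like $\epsilon^{-1}$. Finally, the inverse estimate comes from the matrix identity
$$g^{-1} - \lambda_\epsilon^{-1} = g^{-1}(\lambda_\epsilon - g)\lambda_\epsilon^{-1},$$
together with the observation that the $C^{k,\alpha}$-smallness of $\lambda_\epsilon - g$ ensures, for sufficiently small $\epsilon$, that $\lambda_\epsilon^{-1}$ is uniformly bounded in $C^{k,\alpha}(M)$; a Leibniz-type estimate in $C^{k,\alpha}$ then yields the claimed $C\epsilon$ bound.

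The principal technical obstacle is maintaining uniformity in $\epsilon$ throughout, which reduces to the interplay between the derivatives of $\chi_\epsilon$ (blowing up pointwise like $\epsilon^{-1}$) and the weighting (supplying factors of $\rho \sim \epsilon$ on the support $\{\epsilon \leq \rho \leq 2\epsilon\}$). This tradeoff is precisely what \eqref{chistuff} encodes, and it propagates cleanly through each of the three estimates.
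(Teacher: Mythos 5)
Your proof is correct and follows essentially the same route as the paper's. You reduce to the support-localized identity $g-\lambda_\epsilon = (1-\chi_\epsilon)(g-h)$, establish the weighted estimate $g-h\in C^{k,\alpha}_1(M)$ via the vanishing of $\bar g-\bar h$ at $\partial M$ together with Lemma~\ref{BoundaryLemma} (the paper phrases this equivalently as $\bar h-\bar g\in C^{k,\alpha}_3(M)$), gain the factor of $\epsilon$ from the support being in $\{\rho\leq 2\epsilon\}$, and handle the inverse via a resolvent-type identity where the paper invokes the Neumann series centered at $g$ — two elementary and equivalent ways to say the same thing.
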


\begin{proof}
Since $\bar h$ agrees with $\bar g$ at $\rho=0$, we may apply Lemma \ref{BoundaryLemma} to conclude that
$\bar h - \bar g \in C^{k,\alpha}_3(M)$ with bound independent of $\epsilon$.
Also recall \eqref{chistuff}, which shows that the functions $1-\chi_\epsilon$ are uniformly bounded in $C^{k,\alpha}(M)$. Our first claim now follows from the identity $\lambda_\epsilon -  g = \rho^{-2}(1-\chi_\epsilon)(\bar h - \bar g)$. 

Since the support of $\lambda_\epsilon -  g$ is in $\{\rho \leq 2\epsilon\}$, the first estimate implies the second.
Finally, the estimate for the inverses comes from the second estimate applied to the series expansion $\lambda_\epsilon^{-1} -  g^{-1}$, centered at $g$.
\end{proof}

The following is immediate from the fact that $\bar\lambda_\epsilon=\bar h=\D\rho\otimes \D\rho+\bar b$ in a collar neighborhood of the boundary.

 \begin{lemma}
 \label{BoundaryH}
We have
$
\B_{\bar\lambda_\epsilon}(\rho)  =0
$
 and 
$
\lvert\D\rho\rvert^2_{\bar\lambda_\epsilon} =1
$
along $\partial M$.
 \end{lemma}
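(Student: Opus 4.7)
The plan is to argue that both identities in fact hold throughout an entire collar neighborhood of $\partial M$, not merely at $\partial M$, because in such a collar the metric $\bar\lambda_\epsilon$ coincides with the product metric $\bar h = \D\rho \otimes \D\rho + \bar b$. Indeed, since $\chi_\epsilon$ vanishes on $\{\rho \leq \epsilon\}$, the definition \eqref{DefineLambdaBar} gives $\bar\lambda_\epsilon = \bar h$ on $\{\rho \leq \epsilon\}$, so it suffices to verify the two identities for the product metric $\bar h$ in this collar.

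First I would compute $|\D\rho|^2_{\bar h}$: in the product frame $\bar h = \D\rho\otimes\D\rho + \bar b$ we immediately see $|\D\rho|^2_{\bar h}=1$, since $\bar b$ is a metric on the factor $\partial M$ and has no $\D\rho$ component. This already gives the second claim, and also gives $\grad_{\bar h}\rho = \partial_\rho$.

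Next I would compute the ingredients of $\B_{\bar h}(\rho)$. Because $\bar b$ is pulled back from $\partial M$, it is independent of $\rho$, hence $\sqrt{\det \bar h} = \sqrt{\det \bar b}$ is $\rho$-independent, which gives $\Div_{\bar h}(\partial_\rho) = (\det\bar h)^{-1/2}\partial_\rho\bigl((\det\bar h)^{1/2}\bigr) = 0$. Consequently,
\begin{equation*}
A_{\bar h}(\rho) = \tfrac{1}{2}|\D\rho|_{\bar h}\Div_{\bar h}\bigl[|\D\rho|_{\bar h}\grad_{\bar h}\rho\bigr] = \tfrac{1}{2}\Div_{\bar h}(\partial_\rho) = 0.
\end{equation*}
Similarly, $\mathcal L_{\partial_\rho}(\D\rho\otimes\D\rho) = 0$ since $\mathcal L_{\partial_\rho}\D\rho = \D(\partial_\rho\rho)=0$, and $\mathcal L_{\partial_\rho}\bar b = 0$ by $\rho$-independence of $\bar b$, so $\mathcal L_{\partial_\rho}\bar h = 0$. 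Combined with $\Div_{\bar h}(\partial_\rho)=0$, the definition \eqref{DefineCK} yields $\mathcal D_{\bar h}\bigl(|\D\rho|_{\bar h}^{-2}\grad_{\bar h}\rho\bigr) = \mathcal D_{\bar h}(\partial_\rho) = 0$.

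Substituting into \eqref{DefineB}, the first term vanishes because $\mathcal D_{\bar h}(\partial_\rho)=0$, and the second because $A_{\bar h}(\rho)=0$; hence $\B_{\bar h}(\rho) = 0$ on $\{\rho \leq \epsilon\}$. This proves both claims, and in fact shows they hold in the whole collar rather than only on $\partial M$. The argument is wholly routine; there is no real obstacle since the product structure of $\bar h$ in the collar trivializes all of the Lie-derivative and divergence computations required by the definitions of $A_{\bar g}(\rho)$ and $\B_{\bar g}(\rho)$.
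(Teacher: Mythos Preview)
Your proof is correct and follows the same approach as the paper, which simply states that the lemma is ``immediate from the fact that $\bar\lambda_\epsilon=\bar h=\D\rho\otimes \D\rho+\bar b$ in a collar neighborhood of the boundary.'' You have merely unpacked the routine Lie-derivative and divergence computations that make this immediacy explicit; the one implicit point worth stating is that for $\epsilon$ sufficiently small the set $\{\rho\le\epsilon\}$ lies inside the neighborhood where $\bar h$ is a product, which you use when invoking \eqref{DefineBoundaryMetric}.
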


We now obtain estimates on the scalar curvature of the metrics $\lambda_\epsilon$.
We first note that
\begin{equation}
\label{FirstScalarLambda}
R[\lambda_\epsilon] +6 
= -6(|\D\rho|^2_{\bar\lambda_\epsilon} -1)
+4\rho \Delta_{\bar\lambda_\epsilon}\rho
+\rho^2 R[\bar\lambda_\epsilon].
\end{equation}

In a neighborhood of $\partial M$, where $\lambda_\epsilon = \rho^{-2}\bar h$, we have
\begin{equation}
\label{CompactifyR}
R[\lambda_\epsilon] + 6 = \rho^2 R[\bar h] \in C^{k,\alpha}_2(M),
\end{equation}
due to the fact that  
$|\D\rho|^2_{\bar h} \equiv 1$ and $\Delta_{\bar h}\rho \equiv 0$ near $\partial M$. 
However, we do not have a uniform estimate on $R[\lambda_\epsilon] + 6 $ in $C^{k,\alpha}_2(M)$.
Rather, we obtain the following.

\begin{proposition}
\label{prop:ScalarCurvature}
Let $k\geq 0$ and $\alpha\in(0,1)$. 
For sufficiently small $\epsilon>0$ we have
\begin{equation*}
\| R[\lambda_\epsilon] - R[g]\|_{C^{k,\alpha}_1(M)}\leq C
\quad\text{ and }\quad
\| R[\lambda_\epsilon] - R[g]\|_{C^{k,\alpha}(M)}\leq C\epsilon.
\end{equation*}
\end{proposition}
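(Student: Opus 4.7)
The plan is to express $R[\lambda_\epsilon] - R[g]$ as an integral involving the first variation of scalar curvature along the affine family $g_t := g + t(\lambda_\epsilon - g)$, $t\in[0,1]$, and then to combine this with the geometric-operator machinery of \S\ref{DiffOpSec}. By Lemma \ref{lemma:IntrinsicMetricEstimate} we have $\|g_t - g\|_{C^{k+2,\alpha}(M)} \leq C\epsilon$ uniformly in $t$, so for sufficiently small $\epsilon$ each $g_t$ is an asymptotically hyperbolic metric falling inside the neighborhood of $g$ to which Proposition \ref{prop:GeometricOperatorsContinuous} applies.

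The fundamental theorem of calculus then yields
\[
R[\lambda_\epsilon] - R[g] = \int_0^1 \mathcal L[g_t](\lambda_\epsilon - g)\, dt,
\]
where $\mathcal L[g]u = -\Delta_g(\tr_g u) + \Div_g\Div_g u - \langle \Ric[g], u\rangle_g$ is the well-known linearization of scalar curvature. Direct inspection of this formula shows that $\mathcal L[g]$ is a second-order geometric operator in the sense of \S\ref{DiffOpSec}, sending symmetric covariant $2$-tensors to functions and preserving the weight $\delta$ of the weighted H\"older spaces $C^{k,\alpha}_\delta(M)$. Writing $\mathcal L[g_t] = \mathcal L[g] + (\mathcal L[g_t] - \mathcal L[g])$, the standard boundedness of $\mathcal L[g]$ at the fixed metric $g$ handles the first summand while Proposition \ref{prop:GeometricOperatorsContinuous}, applied to the order-$2$ operator $\mathcal L$, handles the second. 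The result is the uniform mapping estimate
\[
\|\mathcal L[g_t]u\|_{C^{k,\alpha}_\delta(M)} \leq C\|u\|_{C^{k+2,\alpha}_\delta(M)}
\]
valid for every $\delta \in \mathbb R$, with $C$ independent of $t$ and of (small) $\epsilon$.

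To finish I would substitute $u = \lambda_\epsilon - g$ and insert the two bounds supplied by Lemma \ref{lemma:IntrinsicMetricEstimate}: choosing $\delta = 1$ gives $\|\mathcal L[g_t](\lambda_\epsilon - g)\|_{C^{k,\alpha}_1(M)} \leq C$, which integrates to the first desired estimate, while the unweighted bound $\|\lambda_\epsilon - g\|_{C^{k+2,\alpha}(M)} \leq C\epsilon$ integrates to the second. The chief technical point to verify is that the uniform mapping bound on $\mathcal L[g_t]$ really does hold on the $\rho$-weighted space $C^{k,\alpha}_1(M)$, not just on the unweighted $C^{k,\alpha}(M)$; this is ensured by the weight-preserving property of geometric operators together with the uniformity in $\epsilon$ provided by Proposition \ref{prop:GeometricOperatorsContinuous}.
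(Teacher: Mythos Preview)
Your argument is correct and takes a genuinely different route from the paper's proof. The paper works with the conformal change formula \eqref{FirstScalarLambda}, which expresses $R[\lambda_\epsilon]+6$ in terms of the compactified metric $\bar\lambda_\epsilon$; it then estimates the three resulting terms $\rho^2 R[\bar\lambda_\epsilon]$, $\rho\Delta_{\bar\lambda_\epsilon}\rho$, and $|\D\rho|^2_{\bar\lambda_\epsilon}-1$ one by one against their $\bar g$ counterparts, the last of these requiring an appeal to Lemma~\ref{BoundaryLemma} to gain the extra weight. By contrast, you bypass the conformal decomposition entirely: writing $R[\lambda_\epsilon]-R[g]=\int_0^1\mathcal L[g_t](\lambda_\epsilon-g)\,dt$ and noting that the linearized scalar curvature operator is geometric of order $2$, you feed the two estimates of Lemma~\ref{lemma:IntrinsicMetricEstimate} directly through the uniform mapping bound supplied by Proposition~\ref{prop:GeometricOperatorsContinuous}. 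Your approach is cleaner and uses only the abstract geometric-operator machinery already in place; the paper's approach is more hands-on and makes the boundary behaviour explicit. One small point worth stating is that the baseline boundedness $\|\mathcal L[g]u\|_{C^{k,\alpha}_\delta}\leq C\|u\|_{C^{k+2,\alpha}_\delta}$ for the \emph{fixed} metric $g$ is being used (Proposition~\ref{prop:GeometricOperatorsContinuous} only controls the difference $\mathcal L[g_t]-\mathcal L[g]$); this is standard for geometric operators on asymptotically hyperbolic manifolds and is implicitly invoked elsewhere in the paper, e.g.\ in the proof of Proposition~\ref{ell-stuff}.
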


\begin{proof}
We make use of the formula \eqref{FirstScalarLambda}, analyzing each term on the right side.
The scalar curvature $R[\bar\lambda_\epsilon]$ is the sum of contractions of terms of the form
\begin{equation*}
(\bar\lambda_\epsilon)^{-1}\otimes
(\bar\lambda_\epsilon)^{-1}\otimes
(\bar\lambda_\epsilon)^{-1}\otimes
\bar\nabla\,\bar\lambda_\epsilon \otimes
\bar\nabla\,\bar\lambda_\epsilon
\quad\text{ and }\quad
(\bar\lambda_\epsilon)^{-1}\otimes
(\bar\lambda_\epsilon)^{-1}\otimes
\bar\nabla{}^2\bar\lambda_\epsilon;
\end{equation*}
The scalar curvature of $\bar g$ is comprised of analogous terms.
From Lemma \ref{lemma:IntrinsicMetricEstimate} we have
$$\|(\bar\lambda_\epsilon)^{-1} - (\bar g)^{-1}\|_{C^{k,\alpha}_{-2}(M)}
\leq \|\lambda_\epsilon^{-1} -  g^{-1}\|_{C^{k,\alpha}(M)}\leq C.$$ Likewise, both $\|\bar\nabla(\bar\lambda_\epsilon - \bar g)\|_{C^{k,\alpha}_3(M)}$ and $\|\bar\nabla^2(\bar\lambda_\epsilon - \bar g)\|_{C^{k,\alpha}_3(M)}$ can be bounded by 
$$\|\bar\lambda_\epsilon - \bar g\|_{C^{k+2,\alpha}_3(M)}
\leq
\|\lambda_\epsilon -  g\|_{C^{k+2,\alpha}_1(M)}\leq C.$$
We now conclude that 
\begin{equation*}
\|\rho^2(R[\bar\lambda_\epsilon]- R[\bar g])\|_{C^{k,\alpha}_{1}(M)} \leq C.
\end{equation*}
Similar reasoning, using that $\D\rho \in C^{k,\alpha}_1(M)$ and $\bar\nabla\D\rho\in C^{k,\alpha}_2(M)$, yields
\begin{equation*}
\|\rho(\Delta_{\bar\lambda_\epsilon}\rho- \Delta_{\bar g}\rho)\|_{C^{k,\alpha}_1(M)} \leq C.
\end{equation*}

Finally, we estimate the function 
$$
\eta = (|\D\rho|^2_{\bar\lambda_\epsilon}-1) - (|\D\rho|^2_{\bar g}-1) = ((\bar\lambda_\epsilon)^{-1} - (\bar g)^{-1})(\D\rho, \D\rho).
$$
Lemma \ref{lemma:IntrinsicMetricEstimate} implies that $\|\eta\|_{C^{2,\alpha}(M)}$ and $\|\bar\nabla \eta\|_{C^{1,\alpha}_1(M)}$ are uniformly bounded in $\epsilon$.
Since $\eta$ vanishes at $\rho =0$ we may apply Lemma \ref{BoundaryLemma} to conclude that $\|\eta\|_{C^{2,\alpha}_1(M)}$ is uniformly bounded in $\epsilon$.
This establishes the first estimate in the lemma. The second estimate follows from the first due to the fact that $\lambda_\epsilon$ agrees with $g$ for $\rho\geq 2\epsilon$.
\end{proof}

\subsection{Estimates for geometric operators defined by $\lambda_\epsilon$}

Here we record several consequences of Proposition \ref{prop:GeometricOperatorsContinuous} and Lemma \ref{lemma:IntrinsicMetricEstimate}.

\begin{proposition}
\label{ell-stuff}
For any $k\geq 0$ and $\alpha\in [0,1)$, and for any $\delta\in \mathbb R$, there is a constant $C>0$, independent of sufficiently small $\epsilon$, such that the following hold:

\begin{enumerate}
\item\label{DivergenceEstimate}
For any tensor field $u\in C^{k+1,\alpha}_\delta(M)$ we have
\begin{equation*}
\| \Div_{\lambda_\epsilon} u \|_{C^{k,\alpha}_\delta(M)} \leq C \| u\|_{C^{k+1,\alpha}_\delta(M)}.
\end{equation*}

\item\label{Depsilon:est}
For any vector field $X\in C^{k+1,\alpha}_\delta(M)$ we have 
$$
\|\mathcal{D}_{\lambda_\epsilon}X\|_{C^{k,\alpha}_\delta(M)} \leq C \|X\|_{C^{k+1,\alpha}_\delta(M)}.
$$

\end{enumerate}
\end{proposition}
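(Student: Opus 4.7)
The proof is essentially a direct application of the comparison result in Proposition \ref{prop:GeometricOperatorsContinuous}, combined with the uniform closeness of metrics supplied by Lemma \ref{lemma:IntrinsicMetricEstimate}. Both $\Div$ and $\mathcal{D}$ are first-order geometric operators in the sense of \S\ref{DiffOpSec}: in any coordinate chart, the components of $\Div_g u$ are linear in $\partial u$ with coefficients that are polynomials in $g^{ij}$ and $\partial g$, and analogously for $\mathcal{D}_g W = \tfrac12 \mathcal{L}_W g - \tfrac13(\Div_g W)g$, which involves only $g$, $g^{-1}$, and one derivative of each.

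First I would record the boundedness at $\epsilon = 0$: for the fixed polyhomogeneous asymptotically hyperbolic metric $g$, both
$$
\Div_g \colon C^{k+1,\alpha}_\delta(M)\to C^{k,\alpha}_\delta(M), \qquad \mathcal{D}_g \colon C^{k+1,\alpha}_\delta(M)\to C^{k,\alpha}_\delta(M)
$$
are bounded, with norm depending only on $g$, $k$, $\alpha$, and $\delta$. This is the standard mapping property of first-order geometric operators on weighted H\"older spaces (see the discussion preceding Proposition \ref{prop:GeometricOperatorsContinuous}; essentially one reads off the bound by writing the operator in background coordinates and using the definition of the weighted H\"older norms).

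Next, by Lemma \ref{lemma:IntrinsicMetricEstimate} we have $\|g-\lambda_\epsilon\|_{C^{k+1,\alpha}(M)}\leq C\epsilon$, so for all sufficiently small $\epsilon>0$ the smallness hypothesis of Proposition \ref{prop:GeometricOperatorsContinuous} (with $k$ there replaced by $k+1$) is met. Applying that proposition to the order-one geometric operators $\Div$ and $\mathcal{D}$ yields
$$
\|\Div_g u - \Div_{\lambda_\epsilon} u\|_{C^{k,\alpha}_\delta(M)} \leq C\epsilon\,\|u\|_{C^{k+1,\alpha}_\delta(M)}
$$
and an analogous estimate with $\mathcal{D}$ in place of $\Div$ and $X$ in place of $u$. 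The triangle inequality then gives
$$
\|\Div_{\lambda_\epsilon} u\|_{C^{k,\alpha}_\delta(M)} \leq \|\Div_g u\|_{C^{k,\alpha}_\delta(M)} + C\epsilon\,\|u\|_{C^{k+1,\alpha}_\delta(M)} \leq C\,\|u\|_{C^{k+1,\alpha}_\delta(M)},
$$
and similarly for $\mathcal{D}_{\lambda_\epsilon} X$, which is precisely what is claimed.

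I do not anticipate a genuine obstacle here: Proposition \ref{prop:GeometricOperatorsContinuous} together with Lemma \ref{lemma:IntrinsicMetricEstimate} are tailored to package exactly this sort of $\epsilon$-uniform estimate, and the proposition is essentially a bookkeeping statement confirming that the uniform geometric bounds for $g$ transfer to $\lambda_\epsilon$ once $\epsilon$ is small. The only minor care is to choose the common threshold $\epsilon_* > 0$ to be the smaller of the two thresholds furnished by Proposition \ref{prop:GeometricOperatorsContinuous} for the operators $\Div$ and $\mathcal{D}$, which is harmless.
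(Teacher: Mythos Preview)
Your proof is correct and follows essentially the same approach as the paper: establish the bound for the fixed metric $g$, then use Proposition~\ref{prop:GeometricOperatorsContinuous} together with Lemma~\ref{lemma:IntrinsicMetricEstimate} and the triangle inequality to transfer it to $\lambda_\epsilon$ uniformly in $\epsilon$.
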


\begin{proof}
For the first claim, we note that the estimate holds with $\lambda_\epsilon$ replaced by $g$.
Since
\begin{equation*}
\| \Div_{\lambda_\epsilon}u\|_{C^{k,\alpha}_\delta(M)}
\leq \| \Div_{\lambda_\epsilon}u- \Div_{g}u\|_{C^{k,\alpha}_\delta(M)}
+\| \Div_{g}u\|_{C^{k,\alpha}_\delta(M)}
\end{equation*}
we may invoke Proposition \ref{prop:GeometricOperatorsContinuous} and Lemma \ref{lemma:IntrinsicMetricEstimate} to obtain the desired estimate.
The proof of the second claim follows from analogous reasoning.
\end{proof}

Due to Proposition \ref{MappingProperties} the vector Laplacian $L_{\lambda_\epsilon}:C^{k+2,\alpha}_\delta(M) \to C^{k,\alpha}_\delta(M)$ is invertible for each $\epsilon>0$, $k\geq 0$, $\alpha\in (0,1)$ and $\delta \in (-1,3)$. 
In particular, there exist constants $C_{\epsilon}$, depending on $\epsilon$, such that $\|X\|_{C^{k+2,\alpha}_\delta(M)} \leq C_{\epsilon} \| L_{\lambda_\epsilon}X\|_{C^{k,\alpha}_\delta(M)}$.
The linearized Licherowicz operator that appears in \S\ref{SolveConstraint} is similarly invertible for each $\lambda_\epsilon$. 
We now show that the invertibility estimates can be made uniform in $\epsilon$.

\begin{proposition}
\label{UniformInvertVL}

Let $k\geq 0$, $\alpha\in (0,1)$, and $\delta\in (-1,3)$. Furthermore, let the functions $\kappa, \kappa_\epsilon\in C^{k,\alpha}_1(M)$ be such that $\|\kappa_\epsilon-\kappa\|_{C^{k,\alpha}(M)}\leq C\epsilon$ and $3+\kappa\geq 0$.
Then there exists a constant $C>0$ such that:
\begin{enumerate}
\item for all vector fields $X\in C^{k+2,\alpha}_\delta(M)$ and for all sufficiently small $\epsilon >0$ we have
$$
\|X\|_{C^{k+2,\alpha}_\delta(M)} \leq C\| L_{\lambda_\epsilon}X\|_{C^{k,\alpha}_\delta(M)},
$$
and

\item for all functions $u\in C^{k+2,\alpha}_\delta(M)$ and for all sufficiently small $\epsilon >0$ we have
$$
\|u\|_{C^{k+2,\alpha}_\delta(M)} \leq C\| \Delta_{\lambda_\epsilon}u-(3+\kappa_\epsilon)u\|_{C^{k,\alpha}_\delta(M)}.
$$
\end{enumerate}
\end{proposition}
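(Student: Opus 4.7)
Both parts follow from a perturbation argument around the limit metric $g$, exploiting Proposition \ref{prop:GeometricOperatorsContinuous} together with the convergence rate supplied by Lemma \ref{lemma:IntrinsicMetricEstimate}. The backbone is an isomorphism bound for the corresponding operator built from $g$. For part (a) this is the statement that $L_g\colon C^{k+2,\alpha}_\delta(M) \to C^{k,\alpha}_\delta(M)$ is an isomorphism for $\delta\in(-1,3)$; for part (b), the relevant limit operator is $\Delta_g - (3+\kappa)$, which falls under Proposition \ref{MappingProperties}(b) with $c=3$ since $\kappa\in C^{k,\alpha}_1(M)$ has positive decay ($\sigma=1>0$), the sign condition $c+\kappa\geq 0$ is in the hypotheses, and $|\delta-1|\leq 2 = \sqrt{1+c}$ for $\delta\in(-1,3)$. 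Although Proposition \ref{MappingProperties} as stated requires smooth conformal compactification, the cited results of \cite{Lee-FredholmOperators} and \cite{WAH-Preliminary} apply in far greater generality and cover the polyhomogeneous $g$ of Theorem \ref{Density}, whose boundary behavior matches that of a smoothly conformally compact metric. Denoting the reference operator by $\mathcal P_g$ in either case, one thus obtains
\begin{equation*}
\|u\|_{C^{k+2,\alpha}_\delta(M)} \leq C_0\,\|\mathcal P_g u\|_{C^{k,\alpha}_\delta(M)}.
\end{equation*}

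Next I quantify the difference $\mathcal P_g - \mathcal P_{\lambda_\epsilon}$. For part (a), Proposition \ref{prop:GeometricOperatorsContinuous} applied to the second-order geometric operator $L$, combined with the bound $\|g-\lambda_\epsilon\|_{C^{k+2,\alpha}(M)}\leq C\epsilon$ from Lemma \ref{lemma:IntrinsicMetricEstimate}, yields
\begin{equation*}
\|L_g X - L_{\lambda_\epsilon} X\|_{C^{k,\alpha}_\delta(M)} \leq C\epsilon\,\|X\|_{C^{k+2,\alpha}_\delta(M)}.
\end{equation*}
For part (b), I split
\begin{equation*}
[\Delta_{\lambda_\epsilon} - (3+\kappa_\epsilon)]u - [\Delta_g - (3+\kappa)]u = (\Delta_{\lambda_\epsilon} - \Delta_g)u + (\kappa - \kappa_\epsilon)u,
\end{equation*}
where the first term is controlled as in part (a) by Proposition \ref{prop:GeometricOperatorsContinuous}, and the second by the hypothesis $\|\kappa_\epsilon - \kappa\|_{C^{k,\alpha}(M)}\leq C\epsilon$ combined with the continuous inclusion $C^{k+2,\alpha}_\delta(M)\hookrightarrow C^{k,\alpha}_\delta(M)$. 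In each case the total discrepancy is bounded by $C\epsilon\,\|u\|_{C^{k+2,\alpha}_\delta(M)}$.

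The two ingredients are combined by the standard absorption argument: the triangle inequality gives
\begin{equation*}
\|u\|_{C^{k+2,\alpha}_\delta(M)} \leq C_0\,\|\mathcal P_{\lambda_\epsilon} u\|_{C^{k,\alpha}_\delta(M)} + C_0 C\epsilon\,\|u\|_{C^{k+2,\alpha}_\delta(M)},
\end{equation*}
and choosing $\epsilon>0$ small enough that $C_0 C\epsilon \leq \tfrac{1}{2}$ allows the final term to be absorbed into the left-hand side, producing the desired uniform estimate with constant $2C_0$ independent of $\epsilon$. The one genuinely nontrivial point is invoking the reference isomorphism at the polyhomogeneous metric $g$ rather than a smoothly conformally compact one; this is where the ``more general situations'' alluded to in the remarks preceding Proposition \ref{MappingProperties} are essential. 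Everything else is bookkeeping of the perturbation.
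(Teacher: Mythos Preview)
Your argument is correct and follows essentially the same route as the paper: invoke the isomorphism estimate for the reference operator built from $g$ (Proposition \ref{MappingProperties}), bound the discrepancy $\mathcal P_g - \mathcal P_{\lambda_\epsilon}$ via Proposition \ref{prop:GeometricOperatorsContinuous} and Lemma \ref{lemma:IntrinsicMetricEstimate} (plus, for part (b), the hypothesis on $\kappa_\epsilon-\kappa$), and absorb. You actually supply more detail for part (b) than the paper, which simply says the argument is analogous and leaves it to the reader; your explicit verification that $c=3$, $\sigma=1$, and $|\delta-1|\leq 2$ place the operator $\Delta_g-(3+\kappa)$ within the scope of Proposition \ref{MappingProperties}(b) is exactly what is needed, and your flagging of the smooth-versus-polyhomogeneous hypothesis is a point the paper handles only implicitly via the remark preceding that proposition.
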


\begin{proof}
From Proposition \ref{MappingProperties}(\ref{Prop4a}) we have
\begin{equation}
\label{VL-Intermediate}
\begin{aligned}
\|X\|_{C^{k+2,\alpha}_\delta(M)}
&\leq C \|L_gX\|_{C^{k,\alpha}_\delta(M)}
\\
&\leq C\left(
\|L_gX-L_{\lambda_\epsilon}X\|_{C^{k,\alpha}_\delta(M)}
+\|L_{\lambda_\epsilon}X\|_{C^{k,\alpha}_\delta(M)}
\right).
\end{aligned}
\end{equation}
From Proposition \ref{prop:GeometricOperatorsContinuous} we have 
\begin{equation*}
\|L_gX-L_{\lambda_\epsilon}X\|_{C^{k,\alpha}_\delta(M)}
\leq C \|g-\lambda_\epsilon\|_{C^{k+2,\alpha}(M)} \|X\|_{C^{k+2,\alpha}_\delta(M)}.
\end{equation*} 
Making use of Lemma \ref{lemma:IntrinsicMetricEstimate}, we see that this term may be absorbed in to the left side of \eqref{VL-Intermediate} when $\epsilon>0$ is small; 
this proves the first invertibility estimate. The second estimate follows from a similar argument applied to  Proposition \ref{MappingProperties}(\ref{Prop4b}); the details are left to the reader.
\end{proof}

\subsection{Estimates for $\mu_\epsilon$}

\begin{lemma}
\label{EstimateMuEpsilon} Let $k\geq 0$ and $\alpha\in (0,1)$. 
There exists a constant $C>0$ such that
\begin{gather*}
\| \mu_\epsilon - \Sigma \|_{C^{k,\alpha}_1(M)}
\leq C,
\qquad
\| \mu_\epsilon - \Sigma \|_{C^{k,\alpha}(M)}
\leq C\epsilon,
\\
\|\Div_{\lambda_\epsilon} \mu_\epsilon  \|_{C^{k,\alpha}_1(M)}
\leq C,
\qquad
\| \Div_{\lambda_\epsilon}\mu_\epsilon \|_{C^{k,\alpha}(M)}
\leq C\epsilon.
\end{gather*}
Furthermore, $\Div_{\lambda_\epsilon}\mu_\epsilon \in C^{k,\alpha}_\delta(M)$ for all $\delta\in \mathbb R$.
\end{lemma}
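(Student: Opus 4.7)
The plan is to verify the four norm bounds and the final assertion in order, each following from the decomposition $\mu_\epsilon - \Sigma = (\chi_\epsilon - 1)\Sigma$, the product rule for divergence, and the vacuum momentum constraint $\Div_g \Sigma = 0$ from \eqref{CMC-constraints}.

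For the estimates on $\mu_\epsilon - \Sigma$, the key observation is that polyhomogeneity of $(g,K)$ together with $\bar\Sigma \in C^1_\phg(\bar M)$ places $\Sigma = \rho^{-1}\bar\Sigma$ in $C^{k,\alpha}_1(M)$ for every $k$ and $\alpha$: one has $|\Sigma|_g \leq C\rho$ and polyhomogeneity yields analogous bounds on all $g$-covariant derivatives. Combined with the uniform estimate \eqref{chistuff} on $\chi_\epsilon$, this immediately gives $\|\mu_\epsilon - \Sigma\|_{C^{k,\alpha}_1(M)} \leq C$. Since $\mu_\epsilon - \Sigma$ is supported in $\{\rho \leq 2\epsilon\}$, the weight-$1$ bound upgrades to an $O(\epsilon)$ unweighted bound: the weighted norm controls $\rho^{-1}$ times the $g$-norm of covariant derivatives, so the $g$-norms themselves are bounded by $C\rho \leq 2C\epsilon$ on the support.

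For the divergence, I expand using the product rule and the vanishing of $\Div_g\Sigma$:
\begin{equation*}
\Div_{\lambda_\epsilon}\mu_\epsilon = \chi_\epsilon \bigl(\Div_{\lambda_\epsilon} - \Div_g\bigr)\Sigma + \Sigma(\grad_{\lambda_\epsilon}\chi_\epsilon,\cdot).
\end{equation*}
The first summand is handled by Proposition \ref{prop:GeometricOperatorsContinuous} applied to the first-order geometric operator $\Div$, using $\|g - \lambda_\epsilon\|_{C^{k+1,\alpha}(M)} \leq C\epsilon$ from Lemma \ref{lemma:IntrinsicMetricEstimate} together with $\Sigma \in C^{k+1,\alpha}_1(M)$. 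For the second summand, $\D\chi_\epsilon = \chi'(\rho/\epsilon)\epsilon^{-1}\D\rho$ is supported in $\{\epsilon \leq \rho \leq 2\epsilon\}$, and since $\D\rho \in C^{k,\alpha}_1(M)$ its $g$-norm is bounded by $\epsilon^{-1}\rho \leq 2$ on its support, with analogous control of higher covariant derivatives; hence $\grad_{\lambda_\epsilon}\chi_\epsilon$ is uniformly bounded in $C^{k,\alpha}_0(M)$, and its product with $\Sigma \in C^{k,\alpha}_1(M)$ lies uniformly in $C^{k,\alpha}_1(M)$. The $O(\epsilon)$ upgrade to the $C^{k,\alpha}$ bound follows from the same support argument, since $\Div_{\lambda_\epsilon}\mu_\epsilon$ vanishes outside $\{\epsilon \leq \rho \leq 2\epsilon\}$: $\lambda_\epsilon = g$ wherever $\chi_\epsilon = 1$, and $\D\chi_\epsilon$ is supported in this strip. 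The final assertion is then immediate, as the same compact support is bounded away from $\partial M$, making $\rho^{-\delta}$ bounded there for every $\delta \in \mathbb R$.

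I expect the main point of care to be uniformity in $\epsilon$: the control of $\chi_\epsilon$ and $\grad_{\lambda_\epsilon}\chi_\epsilon$ in the appropriate weighted H\"older spaces is driven by $\D\rho \in C^{k,\alpha}_1(M)$ with constants that do not deteriorate as $\epsilon \to 0$, and the weighted-to-unweighted conversion on the thin collar $\{\rho \leq 2\epsilon\}$ is what produces the advertised $\epsilon$-small unweighted bounds.
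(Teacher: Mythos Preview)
Your proof is correct and follows essentially the same approach as the paper: both arguments rest on the uniform $C^{k,\alpha}_1$ control of $\Sigma$ and $\mu_\epsilon$, the support of $\mu_\epsilon-\Sigma$ in $\{\rho\leq 2\epsilon\}$ for the weighted-to-unweighted upgrade, and the vanishing of $\Div_g\Sigma$ to see that $\Div_{\lambda_\epsilon}\mu_\epsilon$ is supported in the strip $\{\epsilon\leq\rho\leq 2\epsilon\}$. The one minor difference is in the third estimate: the paper bounds $\|\Div_{\lambda_\epsilon}\mu_\epsilon\|_{C^{k,\alpha}_1}$ directly from the uniform bound on $\|\mu_\epsilon\|_{C^{k+1,\alpha}_1}$ via Proposition~\ref{ell-stuff}\eqref{DivergenceEstimate}, whereas you expand $\Div_{\lambda_\epsilon}(\chi_\epsilon\Sigma)$ by the product rule and invoke Proposition~\ref{prop:GeometricOperatorsContinuous} together with a gradient bound on $\chi_\epsilon$; both routes are valid, the paper's being slightly more economical.
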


\begin{proof}
First recall \eqref{chistuff} and note that $\bar\Sigma \in C^{1,\alpha}_2(M)$; thus $\mu_\epsilon = \rho^{-1}\chi_\epsilon \bar\Sigma$ is uniformly bounded in $C^{1,\alpha}_1(M)$, which implies the first estimate.
The second estimate follows from this and the fact that the support of $\mu_\epsilon - \Sigma$ is contained in the region where $\rho \leq 2\epsilon$.

The uniform bound on $\mu_\epsilon$ in  $C^{1,\alpha}_1(M)$, together with Proposition \ref{ell-stuff}\eqref{DivergenceEstimate}, implies that $\Div_{\lambda_\epsilon}\mu_\epsilon$ is uniformly bounded in $C^{0,\alpha}_1(M)$.
Since $\lambda_\epsilon$ agrees with $g$ and $\mu_\epsilon$ agrees with $\rho^{-1}\bar\Sigma$ for $\rho \geq 2\epsilon$, we see from \eqref{CMC-constraints} that $\Div_{\lambda_\epsilon}\mu_\epsilon$ is supported in the region $\epsilon \leq \rho \leq 2\epsilon$.
This, together with the third estimate, yields the fourth estimate.

Finally, the fact that $\mu_\epsilon$ is compactly supported implies that  $\Div_{\lambda_\epsilon}\mu_\epsilon \in C^{0,\alpha}_\delta(M)$ for all $\delta$.
\end{proof}

\section{Construction of approximating initial data}\label{SolveConstraint}

\subsection{Analysis of the conformal momentum constraint}

For each free data set $(\lambda_\epsilon,\mu_\epsilon)$, Propositions \ref{MappingProperties} and \ref{SolveGeneric} guarantee that there exists a unique $W_\epsilon\in \rho^3 C^0_\phg(\bar M)$ such that
\begin{equation}
\label{VLepsilon}
L_{\lambda_\epsilon}W_\epsilon = -\Div_{\lambda_\epsilon}\mu_\epsilon
\end{equation}
and $\mathcal D_{\lambda_\epsilon} W_\epsilon \in C^0_\phg(\bar M)$.
By Proposition \ref{UniformInvertVL} there is a constant $C$ such that for all sufficiently small $\epsilon >0$ we have
\begin{equation*}
\|W_\epsilon\|_{C^{k+2,\alpha}_\delta(M)} \leq C \| \Div_{\lambda_\epsilon}\mu_\epsilon\|_{C^{k,\alpha}_\delta(M)}
\end{equation*}
for $\delta=0,1$. The estimates for $\mu_\epsilon$ in Lemma \ref{EstimateMuEpsilon} now imply the following estimates for the solutions $W_\epsilon$ to \eqref{VLepsilon}.

\begin{lemma}
\label{EstimateWepsilon}
Let $k\geq 0$ and $\alpha\in(0,1)$.
 There exists a constant $C>0$ such that 
\begin{gather*}
\| W_\epsilon \|_{C^{k,\alpha}_1(M)} \leq C,
\qquad
\| W_\epsilon \|_{C^{k,\alpha}(M)} \leq C\epsilon,
\\
\| \mathcal D_{\lambda_\epsilon}W_\epsilon \|_{C^{k,\alpha}_1(M)} \leq C,
\qquad
\| \mathcal D_{\lambda_\epsilon} W_\epsilon \|_{C^{k,\alpha}(M)} \leq C\epsilon.
\end{gather*}
\end{lemma}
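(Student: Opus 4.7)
The proof is essentially a chaining together of already-established results; the key observation is that all the necessary uniform estimates have been prepared in the preceding subsections. The only ``new'' work is to combine them in the correct order.

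\medskip

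\noindent\textbf{Plan.} First I would establish the two estimates on $W_\epsilon$ itself. Proposition \ref{UniformInvertVL}(a), applied with $\delta=1$ and with $\delta=0$ respectively, produces a constant $C$ independent of (sufficiently small) $\epsilon$ such that
\begin{equation*}
\|W_\epsilon\|_{C^{k+2,\alpha}_\delta(M)} \leq C\,\|L_{\lambda_\epsilon}W_\epsilon\|_{C^{k,\alpha}_\delta(M)} = C\,\|\Div_{\lambda_\epsilon}\mu_\epsilon\|_{C^{k,\alpha}_\delta(M)},
\end{equation*}
where we have used the defining equation \eqref{VLepsilon}. Since Propositions \ref{MappingProperties} and \ref{SolveGeneric} provide the background regularity of $W_\epsilon$ needed to justify the estimates, we may substitute the bounds from Lemma \ref{EstimateMuEpsilon}, namely $\|\Div_{\lambda_\epsilon}\mu_\epsilon\|_{C^{k,\alpha}_1(M)}\leq C$ and $\|\Div_{\lambda_\epsilon}\mu_\epsilon\|_{C^{k,\alpha}(M)}\leq C\epsilon$, into the right-hand side. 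After weakening the differentiability index from $k+2$ to $k$ (which only enlarges the norm on the left), this delivers the first two asserted estimates for $W_\epsilon$.

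\medskip

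\noindent Next I would deduce the estimates for $\mathcal D_{\lambda_\epsilon}W_\epsilon$ by invoking the continuity of the conformal Killing operator. Proposition \ref{ell-stuff}(\ref{Depsilon:est}), applied with $\delta=1$ and $\delta=0$, yields
\begin{equation*}
\|\mathcal D_{\lambda_\epsilon} W_\epsilon\|_{C^{k,\alpha}_\delta(M)} \leq C\, \|W_\epsilon\|_{C^{k+1,\alpha}_\delta(M)},
\end{equation*}
again with $C$ independent of small $\epsilon$. Plugging in the $W_\epsilon$ estimates obtained in the previous step (with $k$ replaced by $k+1$, which is harmless since $k$ was arbitrary) gives the final two bounds.

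\medskip

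\noindent\textbf{Where the work really is.} There is no genuine obstacle in this lemma itself: the substantive analytical content, i.e.\ the $\epsilon$-uniform invertibility of $L_{\lambda_\epsilon}$, the $\epsilon$-uniform control of $\Div_{\lambda_\epsilon}\mu_\epsilon$ near the boundary, and the $\epsilon$-uniform continuity of $\mathcal D_{\lambda_\epsilon}$, have all been absorbed into Propositions \ref{UniformInvertVL} and \ref{ell-stuff} together with Lemma \ref{EstimateMuEpsilon}. The only point requiring mild care is to record that the constant $C$ in Proposition \ref{UniformInvertVL} is the \emph{same} for the two weights $\delta=0,1$ (each weight lies in the admissible range $(-1,3)$), and that the regularity of $W_\epsilon$ guaranteed by Proposition \ref{SolveGeneric} puts it in a space to which these estimates apply. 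Once these points are noted, the proof reduces to writing down the chain of inequalities above.
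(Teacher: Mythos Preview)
Your proposal is correct and mirrors the paper's own argument: the paper establishes the $W_\epsilon$ bounds in the paragraph immediately preceding the lemma by combining Proposition~\ref{UniformInvertVL} (at $\delta=0,1$) with Lemma~\ref{EstimateMuEpsilon}, and leaves the $\mathcal D_{\lambda_\epsilon}W_\epsilon$ bounds implicit, to be read off from Proposition~\ref{ell-stuff}\eqref{Depsilon:est} exactly as you describe. One small wording slip: weakening the differentiability index from $k+2$ to $k$ \emph{decreases} the norm on the left, which is what makes the inference valid; otherwise the argument is fine.
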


We define the tensors $\sigma_\epsilon$ by
\begin{equation*}
\sigma_\epsilon := \mu_\epsilon + \mathcal D_{\lambda_\epsilon} W_\epsilon
\end{equation*}
and record the following consequence of Lemmas \ref{lemma:IntrinsicMetricEstimate}, \ref{EstimateMuEpsilon}, and \ref{EstimateWepsilon}.

\begin{lemma}
\label{EstimateSigmaNorms}
Let $k\geq 0$, $\alpha\in (0,1)$ and let $\epsilon>0$ be sufficiently small.
The function $|\sigma_\epsilon|^2_{\lambda_\epsilon}$ is in $C^{k,\alpha}_2(M)$, and satisfies
\begin{equation*}
\| |\sigma_\epsilon|^2_{\lambda_\epsilon} - |\Sigma|^2_{\lambda_\epsilon} \|_{C^{k,\alpha}_2(M)}
\leq C
\quad\text{ and }\quad
\| |\sigma_\epsilon|^2_{\lambda_\epsilon} - |\Sigma|^2_{\lambda_\epsilon} \|_{C^{k,\alpha}_1(M)}
\leq C\epsilon.
\end{equation*}
\end{lemma}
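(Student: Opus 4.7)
The key observation is the polarization identity
$$|\sigma_\epsilon|^2_{\lambda_\epsilon} - |\Sigma|^2_{\lambda_\epsilon} = \langle \sigma_\epsilon + \Sigma,\, \sigma_\epsilon - \Sigma \rangle_{\lambda_\epsilon},$$
valid for any pair of symmetric covariant $2$-tensors with respect to any metric. This rewriting reduces the proof to estimating a bilinear contraction, taken with respect to $\lambda_\epsilon^{-1}$, of two tensors whose weighted H\"older norms we can already control.

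The plan is as follows. First I decompose
$$\sigma_\epsilon - \Sigma = (\mu_\epsilon - \Sigma) + \mathcal{D}_{\lambda_\epsilon}W_\epsilon$$
and invoke Lemma \ref{EstimateMuEpsilon} and Lemma \ref{EstimateWepsilon} to obtain the pair of uniform bounds
$$\|\sigma_\epsilon - \Sigma\|_{C^{k,\alpha}_1(M)} \leq C, \qquad \|\sigma_\epsilon - \Sigma\|_{C^{k,\alpha}(M)} \leq C\epsilon.$$
Next, writing $\sigma_\epsilon + \Sigma = 2\Sigma + (\sigma_\epsilon - \Sigma)$ and using that the polyhomogeneous tensor $\Sigma$ lies in $C^{k,\alpha}_1(M)$ with a fixed (independent of $\epsilon$) bound, I get
$$\|\sigma_\epsilon + \Sigma\|_{C^{k,\alpha}_1(M)} \leq C.$$

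The remaining ingredient is the standard algebra property for weighted H\"older spaces applied to the $\lambda_\epsilon$-inner product:
$$\bigl\|\langle u, v \rangle_{\lambda_\epsilon}\bigr\|_{C^{k,\alpha}_{a+b}(M)} \leq C \|u\|_{C^{k,\alpha}_a(M)} \|v\|_{C^{k,\alpha}_b(M)},$$
where the constant depends on $\|\lambda_\epsilon^{-1}\|_{C^{k,\alpha}(M)}$. By Lemma \ref{lemma:IntrinsicMetricEstimate} this inverse metric is uniformly bounded in $\epsilon$, so the constant can be taken independent of $\epsilon$. Applying this with $a=b=1$ to the polarization identity yields the first claimed bound; the fact that $|\sigma_\epsilon|^2_{\lambda_\epsilon}\in C^{k,\alpha}_2(M)$ then follows since $|\Sigma|^2_{\lambda_\epsilon}$ already lies in $C^{k,\alpha}_2(M)$ by the same multiplication principle. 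Applying the algebra estimate instead with $a=1$, $b=0$ and inserting the $\epsilon$-small bound for $\sigma_\epsilon-\Sigma$ produces the second bound.

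There is no substantive obstacle. The only item requiring care is verifying that the algebra constant does not blow up with $\epsilon$, which is immediate from the uniform control of $\lambda_\epsilon^{-1}$ in Lemma \ref{lemma:IntrinsicMetricEstimate}. The improvement from weight $2$ to weight $1$ at the cost of a factor $\epsilon$ is precisely the trade-off already present in the auxiliary estimates for $\mu_\epsilon - \Sigma$ and $\mathcal{D}_{\lambda_\epsilon}W_\epsilon$, ultimately arising from the fact that $\sigma_\epsilon - \Sigma$ is supported in the collar $\{\rho \leq 2\epsilon\}$.
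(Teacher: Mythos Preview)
Your argument is correct and is precisely the computation the paper has in mind when it records this lemma as a ``consequence of Lemmas \ref{lemma:IntrinsicMetricEstimate}, \ref{EstimateMuEpsilon}, and \ref{EstimateWepsilon}'' without further detail. One small inaccuracy in your closing remark: $\sigma_\epsilon - \Sigma$ is \emph{not} supported in $\{\rho \leq 2\epsilon\}$, since $\mathcal D_{\lambda_\epsilon}W_\epsilon$ is globally nonzero; only $\mu_\epsilon - \Sigma$ has that support property, and the $\epsilon$-smallness of $\mathcal D_{\lambda_\epsilon}W_\epsilon$ comes instead from the uniform invertibility estimate applied to the $\epsilon$-small right-hand side $\Div_{\lambda_\epsilon}\mu_\epsilon$---but this does not affect your proof, which correctly invokes Lemma \ref{EstimateWepsilon} rather than any support argument.
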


Finally, we address smoothness of the tensor fields $\sigma_\epsilon$. 
The strategy is to employ Proposition \ref{prop:Linear-L-Smooth} of \S\ref{section:boundary}.

\begin{proposition}\label{VL-NoLogs}
The solution $W_\epsilon$ of \eqref{VLepsilon} and the tensor field $\sigma_\epsilon$ extend smoothly to $\bar M$.
\end{proposition}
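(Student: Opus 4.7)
The plan is to apply Proposition~\ref{prop:Linear-L-Smooth} to $\mathcal{P} = L_{\lambda_\epsilon}$ acting on vector fields (weight $r = -1$). The first key observation is that the right-hand side of \eqref{VLepsilon} is trivially smooth near $\partial M$: because $\mu_\epsilon = \chi_\epsilon \Sigma$ is supported in $\{\rho \geq \epsilon\} \subset M$, the tensor $\Div_{\lambda_\epsilon}\mu_\epsilon$ is compactly supported, hence extends by zero to an element of $C^\infty(\bar M)$ that lies in $\mathcal{B}_\delta(M)$ for every $\delta \in \mathbb R$.

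Proposition~\ref{SolveGeneric}(\ref{SolveGenericVL}), applied to the smoothly conformally compact metric $\lambda_\epsilon$ with this smooth source, then delivers the preliminary regularity $W_\epsilon \in \rho^3 C^0_\phg(\bar M)$ and $\mathcal{D}_{\lambda_\epsilon}W_\epsilon \in C^0_\phg(\bar M)$; converting from $\bar\lambda_\epsilon$-norms to $\lambda_\epsilon$-norms via $|\cdot|_{\lambda_\epsilon} = \rho^{-r}|\cdot|_{\bar\lambda_\epsilon}$ shows in particular that $W_\epsilon \in \mathcal{B}^\phg_4(M)$. Next I would verify that $L_{\lambda_\epsilon}$ meets Assumption~\ref{Assume-L}: it is geometric, elliptic, and formally self-adjoint by construction, and because $\lambda_\epsilon = \rho^{-2}\bar h$ with $\bar h = \D\rho \otimes \D\rho + \bar b$ in a collar of $\partial M$, its indicial operator coincides with that of the vector Laplacian on the hyperbolic half-space. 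Decomposing a vector field along $\partial_\rho$ and along $\partial M$ then factors $I_s(L_{\lambda_\epsilon})$ as a polynomial in $s$ with simple integer roots times a fibrewise isomorphism of $E\big|_{\partial M}$; the isomorphism range $(-1,3)$ from Proposition~\ref{MappingProperties}(\ref{Prop4a}) identifies the indicial radius as $R = 2$, so that the largest characteristic exponent is $\mu = 1 - r + R = 4$.

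With $\mu = 4$ and $r = -1$, the inclusion $W_\epsilon \in \mathcal{B}^\phg_4(M) \subset \mathcal{B}^\phg_{\mu+r}(M) = \mathcal{B}^\phg_3(M)$ together with $L_{\lambda_\epsilon}W_\epsilon \in C^\infty(\bar M) \cap \mathcal{B}_{\delta + r}(M)$ for arbitrarily large $\delta$ verifies the hypotheses of Proposition~\ref{prop:Linear-L-Smooth}, so $W_\epsilon$ extends smoothly to $\bar M$. Smoothness of $\sigma_\epsilon = \mu_\epsilon + \mathcal{D}_{\lambda_\epsilon}W_\epsilon$ then follows immediately: the summand $\mu_\epsilon$ is compactly supported in $M$ and extends trivially by zero, while $\mathcal{D}_{\lambda_\epsilon}W_\epsilon$ is the image of a tensor smooth on $\bar M$ under a first-order geometric operator whose coefficients are smooth functions of the (smoothly conformally compact) metric $\lambda_\epsilon$, so its polyhomogeneous expansion at $\partial M$ carries only non-negative integer exponents.

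The one place where genuine work happens is the indicial computation in the half-space model needed to confirm Assumption~\ref{Assume-L}; this amounts to computing $I_s(L_{\lambda_\epsilon})$ explicitly in the product metric $\bar h$, separating normal and tangential vector components, and reading off that the resulting scalar indicial polynomials have only simple integer zeros. Everything else is bookkeeping between the weighted H\"older spaces $C^{k,\alpha}_\delta(M)$, the polyhomogeneous spaces $\mathcal B^\phg_\delta(M)$, and the weight conversion $|\cdot|_{\lambda_\epsilon} = \rho^{-r}|\cdot|_{\bar\lambda_\epsilon}$.
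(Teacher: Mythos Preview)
Your proposal is correct and follows essentially the same route as the paper: verify Assumption~\ref{Assume-L} for $L_{\lambda_\epsilon}$ via the indicial map (the paper records the explicit formula $I_s(L_{\lambda_\epsilon})Y = -\tfrac12(Y + \tfrac13 Y(\rho)\grad_{\bar\lambda_\epsilon}\rho)(s^2-4s)$, giving simple integer roots $0,4$ and $\mu=4$), note that the compactly supported source lies in $C^\infty(\bar M)\cap\mathcal B_\delta(M)$ for all $\delta$, combine polyhomogeneity from Proposition~\ref{SolveGeneric} with decay from Proposition~\ref{MappingProperties}(\ref{Prop4a}) to obtain $W_\epsilon\in\mathcal B^\phg_{\mu+r}(M)=\mathcal B^\phg_3(M)$, and apply Proposition~\ref{prop:Linear-L-Smooth}. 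One small remark: the paper obtains the decay via Proposition~\ref{MappingProperties}(\ref{Prop4a}) rather than directly from $\rho^3 C^0_\phg(\bar M)$, so it only claims $\mathcal B^\phg_3(M)$ rather than your $\mathcal B^\phg_4(M)$---but as you observe, only the weaker inclusion is needed.
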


\begin{proof}
A direct computation shows that the indicial map $I_s(L_{\lambda_\epsilon})$ is
\begin{equation*}
I_s(L_{\lambda_\epsilon}) Y = -\frac12\left( Y + \frac13 Y(\rho) \grad_{\bar\lambda_\epsilon}\rho \right)(s^2 - 4s).
\end{equation*}
Thus $L_{\lambda_\epsilon}$ satisfies Assumption \ref{Assume-L} with the highest characteristic exponent of $\mu=4$. 
By Proposition \ref{SolveGeneric} we have that $W_\epsilon$ is polyhomogeneous, while Proposition \ref{MappingProperties} implies $W_\epsilon\in C^k_\delta(M)$ for all $k\geq 0$ and $\delta<3$. Since $L_{\lambda_\epsilon}W_\epsilon=-\Div_{\lambda_\epsilon}\mu_\epsilon$ extends smoothly to $\bar M$ and since by Lemma \ref{EstimateMuEpsilon} $\Div_{\lambda_\epsilon}\mu_\epsilon\in C^k_\delta(M)$ for all $k\geq 0$ and all $\delta\in \mathbb{R}$, we are in position to apply Proposition \ref{prop:Linear-L-Smooth}. Consequently, $W_\epsilon$ extends smoothly to $\bar M$, and thus $\sigma_\epsilon$ does as well.
\end{proof}

\subsection{Analysis of the Lichnerowicz equation}
\label{AnalyzeLich}

From Proposition \ref{SolveGeneric}\eqref{SolveGenericLich} there exists, for each sufficiently small $\epsilon>0$, a {unique} positive polyhomogeneous function $\phi_\epsilon\in C^2_\phg(\bar M)$ such that
\begin{equation}
\label{LichEpsilon}
\begin{gathered}
0=\mathcal N_\epsilon(\phi_\epsilon):= \Delta_{\lambda_\epsilon}\phi_\epsilon 
-\frac18 R[\lambda_\epsilon] \phi_\epsilon + \frac18|\sigma_\epsilon |^2_{\lambda_\epsilon} \phi_\epsilon^{-7} - \frac{3}{4}\phi_\epsilon^5,
\\
\qquad \left.\phi_\epsilon\right|_{\partial M} =1.
\end{gathered}
\end{equation}

In order to obtain estimates on $\phi_\epsilon-1$ we first show that the constant function $\phi=1$ is an approximate solution of \eqref{LichEpsilon}.

\begin{lemma}
\label{N1-Estimate} 
Let $k\geq 0$ and $\alpha\in (0,1)$.
For each sufficiently small $\epsilon>0$ we have $\mathcal N_\epsilon(1)\in C^{0,\alpha}_1(M)$ with
\begin{equation}
\label{N1Estimate}
\|\mathcal N_\epsilon(1)\|_{C^{k,\alpha}_1(M)} \leq C
\quad\text{ and }\quad
\|\mathcal N_\epsilon(1)\|_{C^{k,\alpha}(M)} \leq C\epsilon.
\end{equation}
\end{lemma}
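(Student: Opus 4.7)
The approach is to compute $\mathcal{N}_\epsilon(1)$ explicitly, use the Hamiltonian constraint for the given initial data $(g,K)$ to recognize the leading-order cancellation, and then estimate each remaining difference term by invoking the estimates already established for $\lambda_\epsilon$, $R[\lambda_\epsilon]$, and $\sigma_\epsilon$.

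First I would evaluate directly from the definition \eqref{LichEpsilon}:
\begin{equation*}
\mathcal{N}_\epsilon(1) = -\tfrac{1}{8}R[\lambda_\epsilon] + \tfrac{1}{8}|\sigma_\epsilon|^2_{\lambda_\epsilon} - \tfrac{3}{4}.
\end{equation*}
Since $(g,\Sigma-g)$ solves the CMC constraints \eqref{CMC-constraints}, we have $R[g] + 6 = |\Sigma|^2_g$, so $-\tfrac{1}{8}R[g] - \tfrac{3}{4} + \tfrac{1}{8}|\Sigma|^2_g = 0$. Subtracting $0$ from the expression above yields the key identity
\begin{equation*}
\mathcal{N}_\epsilon(1) = \tfrac{1}{8}\bigl(R[g] - R[\lambda_\epsilon]\bigr) + \tfrac{1}{8}\bigl(|\sigma_\epsilon|^2_{\lambda_\epsilon} - |\Sigma|^2_g\bigr).
\end{equation*}
The content of the lemma is now that each of these two differences satisfies both of the estimates in \eqref{N1Estimate}.

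The first difference $R[g]-R[\lambda_\epsilon]$ is controlled directly by Proposition \ref{prop:ScalarCurvature}. For the second difference, I would split further as
\begin{equation*}
|\sigma_\epsilon|^2_{\lambda_\epsilon} - |\Sigma|^2_g = \bigl(|\sigma_\epsilon|^2_{\lambda_\epsilon} - |\Sigma|^2_{\lambda_\epsilon}\bigr) + \bigl(|\Sigma|^2_{\lambda_\epsilon} - |\Sigma|^2_g\bigr).
\end{equation*}
The first piece is handled by Lemma \ref{EstimateSigmaNorms}: it is bounded in $C^{k,\alpha}_2(M) \subseteq C^{k,\alpha}_1(M)$ uniformly, and by $C\epsilon$ in $C^{k,\alpha}_1(M) \subseteq C^{k,\alpha}(M)$. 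For the second piece I would use the telescoping identity
\begin{equation*}
\lambda_\epsilon^{ij}\lambda_\epsilon^{kl} - g^{ij}g^{kl} = (\lambda_\epsilon^{ij} - g^{ij})\lambda_\epsilon^{kl} + g^{ij}(\lambda_\epsilon^{kl} - g^{kl})
\end{equation*}
to write $|\Sigma|^2_{\lambda_\epsilon} - |\Sigma|^2_g$ as a contraction whose factors are the metric difference $\lambda_\epsilon^{-1}-g^{-1}$ (controlled by Lemma \ref{lemma:IntrinsicMetricEstimate}) and the bounded tensor $\Sigma \otimes \Sigma$, the latter being of class $C^{k,\alpha}_2(M)$ since $\bar\Sigma=\rho\Sigma$ is polyhomogeneous with $\Re(t_0)>1$.

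The main obstacle is a bookkeeping one: ensuring that all three contributions — the scalar curvature difference, the difference $|\sigma_\epsilon|^2_{\lambda_\epsilon} - |\Sigma|^2_{\lambda_\epsilon}$, and the conformal change of norm $|\Sigma|^2_{\lambda_\epsilon} - |\Sigma|^2_g$ — simultaneously yield the uniform $C^{k,\alpha}_1(M)$ bound and the $\epsilon$-small $C^{k,\alpha}(M)$ bound. The $\epsilon$-gain in the unweighted norm ultimately comes from the fact that $\lambda_\epsilon = g$ and $\mu_\epsilon = \Sigma$ outside $\{\rho \leq 2\epsilon\}$, so each of these differences is supported in a $2\epsilon$-collar of $\partial M$; the weighted bound then automatically upgrades to an $O(\epsilon)$ unweighted bound, exactly as in the proofs of Lemma \ref{lemma:IntrinsicMetricEstimate} and Proposition \ref{prop:ScalarCurvature}.
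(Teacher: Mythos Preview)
Your proof is correct and follows essentially the same route as the paper: you derive the same three-term decomposition of $\mathcal N_\epsilon(1)$ via the CMC constraint \eqref{CMC-constraints} and invoke Proposition~\ref{prop:ScalarCurvature}, Lemma~\ref{EstimateSigmaNorms}, and Lemma~\ref{lemma:IntrinsicMetricEstimate} for the respective pieces. The additional detail you give on the telescoping argument for $|\Sigma|^2_{\lambda_\epsilon}-|\Sigma|^2_g$ is a reasonable unpacking of what the paper leaves implicit.
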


\begin{proof}
Using \eqref{CMC-constraints} we have 
\begin{equation*}
\mathcal N_\epsilon(1) = 
-\frac18\left(R[\lambda_\epsilon]-R[g]\right)
+  \frac18\left(|\sigma_\epsilon |^2_{\lambda_\epsilon}- |\Sigma|^2_{\lambda_\epsilon} \right)
+\frac18 \left(|\Sigma|^2_{\lambda_\epsilon}-|\Sigma|^2_{g}\right).
\end{equation*}
Estimates \eqref{N1Estimate} are now immediate from Proposition \ref{prop:ScalarCurvature}, Lemma \ref{EstimateSigmaNorms}, and 
Lemma \ref{lemma:IntrinsicMetricEstimate}.
\end{proof}

The linearization of $\mathcal N_\epsilon$ at $\phi =1$ is the operator
\begin{equation}
\label{LinearizedN}
\mathcal L_\epsilon := \Delta_{\lambda_\epsilon} - (3+\kappa_\epsilon),
\end{equation}
where 
\begin{equation*}
\kappa_\epsilon =  \frac18 \left( R[\lambda_\epsilon] + 6\right) + \frac78 |\sigma_\epsilon|^2_{\lambda_\epsilon}.
\end{equation*}

We now prove the properties of $\kappa_\epsilon$ needed in order to apply Proposition \ref{UniformInvertVL}. 
To that end we set $\kappa=|\Sigma|^2_g$ and
note that $\kappa\in C^{k,\alpha}_2(M)$ for all $k\geq 0$ and $\alpha\in (0,1)$.

\begin{lemma}
\label{kappa-BasicProperties}
For all $k\geq 0$ and $\alpha\in (0,1)$ and sufficiently small $\epsilon >0$ we have:
\begin{enumerate}
\item $\kappa_\epsilon \in C^{k,\alpha}_1(M)$.
\item $\|\kappa_\epsilon-\kappa\|_{C^{k,\alpha}(M)} \leq C\epsilon$.
\end{enumerate}
\end{lemma}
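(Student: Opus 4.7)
The key observation is that the CMC Hamiltonian constraint \eqref{CMC-constraints} for the original data reads $R[g]+6 = |\Sigma|^2_g = \kappa$. Substituting this identity lets me compare $\kappa_\epsilon$ with $\kappa$ by isolating (i) the scalar-curvature deficit $R[\lambda_\epsilon]-R[g]$, already controlled by Proposition \ref{prop:ScalarCurvature}, and (ii) the shift $|\sigma_\epsilon|^2_{\lambda_\epsilon} - |\Sigma|^2_g$, which I intend to handle by interposing $|\Sigma|^2_{\lambda_\epsilon}$ and appealing to Lemma \ref{EstimateSigmaNorms} and Lemma \ref{lemma:IntrinsicMetricEstimate}.

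For part (a), I would write $R[\lambda_\epsilon]+6 = (R[\lambda_\epsilon]-R[g]) + |\Sigma|^2_g$. Proposition \ref{prop:ScalarCurvature} provides a uniform bound on the first summand in $C^{k,\alpha}_1(M)$, and the second summand lies in $C^{k,\alpha}_2(M) \subset C^{k,\alpha}_1(M)$ because $\bar\Sigma$ is polyhomogeneous and hence $\Sigma$ is a weight-2 tensor field of class $C^{k,\alpha}_1(M)$. Combined with $|\sigma_\epsilon|^2_{\lambda_\epsilon} \in C^{k,\alpha}_2(M) \subset C^{k,\alpha}_1(M)$ from Lemma \ref{EstimateSigmaNorms}, this immediately yields $\kappa_\epsilon \in C^{k,\alpha}_1(M)$.

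For part (b), the same substitution gives
$$\kappa_\epsilon - \kappa = \frac18\bigl(R[\lambda_\epsilon]-R[g]\bigr) + \frac78\bigl(|\sigma_\epsilon|^2_{\lambda_\epsilon} - |\Sigma|^2_g\bigr),$$
and Proposition \ref{prop:ScalarCurvature} bounds the first term by $C\epsilon$ in $C^{k,\alpha}(M)$. I would split the second as $(|\sigma_\epsilon|^2_{\lambda_\epsilon} - |\Sigma|^2_{\lambda_\epsilon}) + (|\Sigma|^2_{\lambda_\epsilon} - |\Sigma|^2_g)$. Lemma \ref{EstimateSigmaNorms} controls the first piece in $C^{k,\alpha}_1(M)$ by $C\epsilon$, which dominates the $C^{k,\alpha}(M)$ norm via the embedding $C^{k,\alpha}_1(M)\hookrightarrow C^{k,\alpha}(M)$. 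For the second piece, I would expand it as a contraction of $\Sigma\otimes\Sigma$ with $\lambda_\epsilon^{-1}\otimes\lambda_\epsilon^{-1} - g^{-1}\otimes g^{-1}$; Lemma \ref{lemma:IntrinsicMetricEstimate} bounds the inverse-metric difference by $C\epsilon$ in $C^{k,\alpha}(M)$, while $\Sigma$, being a fixed polyhomogeneous weight-2 tensor field, has uniform $C^{k,\alpha}_1(M)$ control. A weighted product rule then gives the desired $C\epsilon$ bound in $C^{k,\alpha}(M)$.

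No step is genuinely difficult; the argument is primarily bookkeeping with the estimates already in hand. The only minor subtlety is passing from the weighted $C^{k,\alpha}_1(M)$ bounds available in Lemma \ref{EstimateSigmaNorms} and in the scalar-curvature step to the unweighted $C^{k,\alpha}(M)$ bound required in part (b); this is resolved by the embedding $C^{k,\alpha}_\delta(M)\hookrightarrow C^{k,\alpha}(M)$ for $\delta\geq 0$ and by the fact that multiplication by uniformly bounded $C^{k,\alpha}_\delta$ tensors preserves weighted Hölder regularity with an additive shift in weight.
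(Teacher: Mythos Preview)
Your proposal is correct and follows essentially the same approach as the paper: both use the constraint $R[g]+6=|\Sigma|^2_g$ to write $\kappa_\epsilon-\kappa=\tfrac18(R[\lambda_\epsilon]-R[g])+\tfrac78(|\sigma_\epsilon|^2_{\lambda_\epsilon}-|\Sigma|^2_g)$ and then invoke Proposition~\ref{prop:ScalarCurvature}, Lemma~\ref{EstimateSigmaNorms}, and Lemma~\ref{lemma:IntrinsicMetricEstimate}. Your treatment is slightly more explicit, in particular the intermediate splitting through $|\Sigma|^2_{\lambda_\epsilon}$ and the observation that $R[g]+6=|\Sigma|^2_g\in C^{k,\alpha}_2(M)$ is needed for part~(a), but these are exactly the details the paper leaves implicit.
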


\begin{proof}
The fact that $\kappa_\epsilon \in C^{k,\alpha}_1(M)$ is immediate from Proposition \ref{prop:ScalarCurvature} and Lemma \ref{EstimateSigmaNorms}.
Using \eqref{CMC-constraints} we can express $\kappa_\epsilon-\kappa$ as
$$\kappa_\epsilon-\kappa=\frac18 \left( R[\lambda_\epsilon] -R[g]\right) + \frac78 \left(|\sigma_\epsilon|^2_{\lambda_\epsilon}-|\Sigma|^2_g\right).$$
The $C^{k,\alpha}(M)$ estimate on the difference of scalar curvatures follows from Proposition \ref{prop:ScalarCurvature}, while the remaining estimate follows from Lemmas \ref{lemma:IntrinsicMetricEstimate} and \ref{EstimateSigmaNorms}.
\end{proof}

We now see from Proposition \ref{UniformInvertVL} that for all  $\delta \in (-1,3)$ and sufficiently small $\epsilon>0$ the mapping
\begin{equation*}
\mathcal L_\epsilon \colon C^{k+2,\alpha}_\delta(M) \to C^{k,\alpha}_\delta(M)
\end{equation*}
defined in \eqref{LinearizedN} is an isomorphism with inverse bounded uniformly in $\epsilon$.

We now proceed to obtain estimates for $\phi_\epsilon$ by viewing $\phi_\epsilon -1$ as a fixed point of the map
\begin{equation*}
\mathcal G_\epsilon\colon u \to -\mathcal L_\epsilon^{-1}\left( \mathcal N_\epsilon(1) + \mathcal Q_\epsilon(u)\right),
\end{equation*}
where
\begin{equation*}
\begin{aligned}
\mathcal Q_\epsilon(u)
&:= \mathcal N_{\epsilon}(1+u) - \mathcal N_\epsilon(1) -\mathcal L_\epsilon(u)
\\
&= \frac18|\sigma_\epsilon|^2_{\lambda_\epsilon} \left( (1+u)^{-7} -1 + 7u\right)
-\frac34 \left( (1+u)^5 -1 -5u\right).
\end{aligned}
\end{equation*}
In preparation, we define for each $r_0, r_1>0$, which are not necessarily independent of $\epsilon$ and a fixed integer $k\in \mathbb{N}_0$, the collection of functions
\begin{equation*}
X(r_0, r_1) = \{ u \in C^{k+2,\alpha}_1(M) \mid 
\| u\|_{C^{k+2,\alpha}_1(M)} \leq r_1
 \text{ and }
\| u\|_{C^{k+2,\alpha}(M)} \leq r_0\}.
\end{equation*}
Note that for all $r_0, r_1>0$, the set $X(r_0, r_1)$ is a complete metric space with respect to  the norm $\gNorm{u}_X := \|u\|_{C^{k+2,\alpha}_1(M)} + \|u\|_{C^{k+2,\alpha}(M)}$.

We require the following mapping properties of $\mathcal Q_\epsilon$.
\begin{lemma}
\label{Q-Mapping}
Let $k\geq 0$ and $\alpha\in (0,1)$.
There exists $r_*>0$ and continuous function $F\colon [0,r_*]\times [0,r_*] \to [0,\infty)$, both independent of $\epsilon>0$,  such that $F(0,0) =0$ and such that for each $\delta\in [0,1]$ we have
\begin{equation*}
\| \mathcal Q_\epsilon(u) - \mathcal Q_\epsilon(v)\|_{C^{k,\alpha}_\delta(M)}
\leq F\left(\|u\|_{C^{k,\alpha}(M)} , \|v\|_{C^{k,\alpha}(M)}\right)\, \|u-v\|_{C^{k,\alpha}_\delta(M)}
\end{equation*}
for all $u,v\in X(r_0, r_1)$ with $r_0\in [0,r_*]$ and $r_1>0$.
In particular
\begin{equation*}
\| \mathcal Q_\epsilon(u) \|_{C^{k,\alpha}_\delta(M)} \leq F(r_0,0 ) \,\|u\|_{C^{k,\alpha}_\delta(M)}.
\end{equation*}
\end{lemma}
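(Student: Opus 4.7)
The plan is to exploit the fact that $\mathcal Q_\epsilon$ collects precisely the ``quadratic-and-higher'' remainder in the expansion of $\mathcal N_\epsilon$ about $\phi=1$. Setting
\begin{equation*}
P(u) := (1+u)^{-7}-1+7u,
\qquad
Q(u) := (1+u)^{5}-1-5u,
\end{equation*}
we have $\mathcal Q_\epsilon(u) = \tfrac18|\sigma_\epsilon|^2_{\lambda_\epsilon}P(u)-\tfrac34 Q(u)$, and a direct computation gives $P'(0)=Q'(0)=0$. Consequently $P'(w)=w\hat P(w)$ and $Q'(w)=w\hat Q(w)$ for smooth $\hat P,\hat Q$ defined on $(-1,\infty)$; this vanishing is the sole source of the smallness needed to arrange $F(0,0)=0$.

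First I would choose $r_*\in(0,1/2)$ so that $1+w$ stays uniformly positive whenever $|w|\le r_*$, ensuring smooth composition estimates for the analytic terms. For $u,v\in X(r_0,r_1)$ with $r_0\le r_*$ the fundamental theorem of calculus gives
\begin{equation*}
P(u)-P(v) = (u-v)\,\tilde P(u,v),
\qquad
\tilde P(u,v) := \int_0^1 P'\bigl(v+t(u-v)\bigr)\,dt,
\end{equation*}
and similarly $Q(u)-Q(v)=(u-v)\tilde Q(u,v)$. Substituting the factorizations of $P'$ and $Q'$ recasts $\tilde P(u,v)$ as $\int_0^1[v+t(u-v)]\hat P(v+t(u-v))\,dt$, and likewise for $\tilde Q$. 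Combining this with the standard composition estimate---namely, that for a smooth scalar $f$ and any $w\in C^{k,\alpha}(M)$ with $\|w\|_{C^{k,\alpha}(M)}\le r_*$, $\|f(w)\|_{C^{k,\alpha}(M)}$ is bounded by a continuous function of $\|w\|_{C^{k,\alpha}(M)}$---and the obvious convex-combination bound $\|v+t(u-v)\|_{C^{k,\alpha}(M)}\le\max(\|u\|_{C^{k,\alpha}(M)},\|v\|_{C^{k,\alpha}(M)})$, one obtains a continuous $F_0\colon[0,r_*]^2\to[0,\infty)$, monotone in each argument with $F_0(0,0)=0$, such that
\begin{equation*}
\|\tilde P(u,v)\|_{C^{k,\alpha}(M)}+\|\tilde Q(u,v)\|_{C^{k,\alpha}(M)} \le F_0\bigl(\|u\|_{C^{k,\alpha}(M)},\|v\|_{C^{k,\alpha}(M)}\bigr).
\end{equation*}
The vanishing of $F_0$ at the origin is inherited directly from the factor of $(v+t(u-v))$ produced by the zeroing of $P'$ and $Q'$ at $0$.

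Finally, I would combine these ingredients with the weighted H\"older product estimate $\|f g\|_{C^{k,\alpha}_\delta(M)}\le C\|f\|_{C^{k,\alpha}(M)}\|g\|_{C^{k,\alpha}_\delta(M)}$ and the uniform bound on $\||\sigma_\epsilon|^2_{\lambda_\epsilon}\|_{C^{k,\alpha}(M)}$ supplied by Lemma \ref{EstimateSigmaNorms} via the inclusion $C^{k,\alpha}_2(M)\subseteq C^{k,\alpha}(M)$. Factoring
\begin{equation*}
\mathcal Q_\epsilon(u)-\mathcal Q_\epsilon(v)=\bigl[\tfrac18|\sigma_\epsilon|^2_{\lambda_\epsilon}\tilde P(u,v)-\tfrac34\tilde Q(u,v)\bigr](u-v),
\end{equation*}
and absorbing the full weight $\delta\in[0,1]$ into the $(u-v)$ factor yields the estimate with $F:=CF_0$ for an $\epsilon$-independent constant $C$; the ``in particular'' statement follows by taking $v=0$ and noting $\mathcal Q_\epsilon(0)=0$. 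I expect the main delicate point to be the weight bookkeeping: since $u,v\in C^{k+2,\alpha}_1(M)$ the difference $u-v$ automatically lies in $C^{k,\alpha}_\delta(M)$ for every $\delta\in[0,1]$, so the product estimate applies uniformly across the required range of $\delta$, while the arguments of $F_0$ involve only the unweighted norms as the statement demands.
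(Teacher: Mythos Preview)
Your proposal is correct and follows the same essential strategy as the paper: factor $\mathcal Q_\epsilon(u)-\mathcal Q_\epsilon(v)$ as $(u-v)$ times something small in $C^{k,\alpha}(M)$ when $\|u\|_{C^{k,\alpha}(M)},\|v\|_{C^{k,\alpha}(M)}$ are small, then invoke the weighted product estimate together with the uniform $C^{k,\alpha}(M)$ bound on $|\sigma_\epsilon|^2_{\lambda_\epsilon}$ coming from Lemma~\ref{EstimateSigmaNorms}.

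The only difference is in how the factorization is implemented. The paper expands $(1+u)^5-1-5u=\sum_{l=2}^{5}\binom{5}{l}u^l$ and $(1+u)^{-7}-1+7u=\sum_{l=2}^{\infty}(-1)^l\binom{l+6}{l}u^l$ explicitly, then uses the algebraic identity $u^l-v^l=(u-v)\sum_{j}u^{j}v^{l-1-j}$ to produce an explicit series for $F$. Your route, via the fundamental theorem of calculus and the factorization $P'(w)=w\hat P(w)$, $Q'(w)=w\hat Q(w)$, accomplishes exactly the same thing without writing the series out, at the cost of invoking a general $C^{k,\alpha}$ composition estimate. Both extract the vanishing $F(0,0)=0$ from the fact that the remainders start at quadratic order; the paper's version is more concrete and self-contained, while yours would apply verbatim to nonlinearities lacking a convenient power series. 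For this lemma the distinction is cosmetic.
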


\begin{proof}
Set
\begin{equation*}
H_l(u,v) = u^{l-1}v + u^{l-2}v^2 + \dots + u v^{l-1}.
\end{equation*}
With
\begin{equation*}
Q_1(u):= u^5
\quad\text{ and }\quad
Q_2(u):=u^{-7}
\end{equation*}
we have
\begin{equation*}
Q_1(u) - Q_1(v) = (u-v) \sum_{l=2}^5 \binom{5}{l} H_l(u,v)
\end{equation*} 
and
\begin{equation*}
Q_2(u) - Q_2(v) = (u-v) \sum_{l=2}^\infty(-1)^l \binom{l+6}{l} H_l(u,v)
\end{equation*}
provided $|u|$ and $|v|$ are less than $1$.
The uniform bound on $|\sigma_\epsilon|^2_{\lambda_\epsilon}$ provided by Lemma \ref{EstimateSigmaNorms} implies that there are constants $C_*$ and $C_l$, $l\in \mathbb N$, such that
\begin{equation*}
F(u,v) = C_* \sum_{l=2}^\infty C_l H_l(u,v)
\end{equation*}
converges uniformly and has the desired properties.
\end{proof}

We now obtain the desired contraction property of $\mathcal G_\epsilon$.
\begin{lemma}
There exists constants $\epsilon_*>0$ and  $C_*>0$ such that for each $\epsilon\in (0,\epsilon_*]$ the map  $\mathcal G_\epsilon$ is a contraction mapping $X(C_*\epsilon, C_*) \to X(C_*\epsilon, C_*)$.
\end{lemma}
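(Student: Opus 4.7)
The plan is to combine the uniform invertibility of $\mathcal L_\epsilon$ from Proposition \ref{UniformInvertVL} (applicable by Lemma \ref{kappa-BasicProperties}) with the small-data bounds on $\mathcal N_\epsilon(1)$ from Lemma \ref{N1-Estimate} and the Lipschitz/smallness estimates on $\mathcal Q_\epsilon$ from Lemma \ref{Q-Mapping}. The main task is to select the constants $C_*$ and $\epsilon_*$ so that $\mathcal G_\epsilon$ both preserves the ball $X(C_*\epsilon,C_*)$ and contracts distances measured in the norm $\gNorm{\cdot}_X$.

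First, I would let $C_{\mathrm{inv}}$ denote the common bound (from Proposition \ref{UniformInvertVL}, valid for $\delta = 0$ and $\delta = 1$) for the inverse of $\mathcal L_\epsilon \colon C^{k+2,\alpha}_\delta(M) \to C^{k,\alpha}_\delta(M)$, and let $C_{\mathcal N}$ denote the bound from Lemma \ref{N1-Estimate} so that $\|\mathcal N_\epsilon(1)\|_{C^{k,\alpha}_1(M)} \le C_{\mathcal N}$ and $\|\mathcal N_\epsilon(1)\|_{C^{k,\alpha}(M)} \le C_{\mathcal N}\,\epsilon$. Setting $C_* := 2C_{\mathrm{inv}}C_{\mathcal N}$, for any $u \in X(C_*\epsilon, C_*)$ we have $\|u\|_{C^{k,\alpha}(M)} \le C_*\epsilon$, so Lemma \ref{Q-Mapping} yields
\begin{equation*}
\|\mathcal Q_\epsilon(u)\|_{C^{k,\alpha}_\delta(M)}
\le F(C_*\epsilon, 0)\,\|u\|_{C^{k,\alpha}_\delta(M)},
\qquad \delta \in \{0,1\}.
\end{equation*}
Since $F(0,0) = 0$, by continuity I may shrink $\epsilon_* > 0$ so that $C_{\mathrm{inv}}\,F(C_*\epsilon,0) \le \tfrac{1}{2}$ for all $\epsilon \in (0,\epsilon_*]$. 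Applying $\mathcal L_\epsilon^{-1}$ in the weighted and unweighted norms and combining the two contributions then gives
\begin{equation*}
\|\mathcal G_\epsilon(u)\|_{C^{k+2,\alpha}_1(M)}
\le C_{\mathrm{inv}} C_{\mathcal N} + \tfrac{1}{2}C_*
= C_*,
\end{equation*}
and analogously $\|\mathcal G_\epsilon(u)\|_{C^{k+2,\alpha}(M)} \le C_{\mathrm{inv}}C_{\mathcal N}\,\epsilon + \tfrac{1}{2}C_*\,\epsilon = C_*\,\epsilon$. This establishes self-mapping $\mathcal G_\epsilon \colon X(C_*\epsilon,C_*) \to X(C_*\epsilon,C_*)$.

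For the contraction property, observe that $\mathcal G_\epsilon(u) - \mathcal G_\epsilon(v) = -\mathcal L_\epsilon^{-1}\!\left(\mathcal Q_\epsilon(u) - \mathcal Q_\epsilon(v)\right)$. For $u,v \in X(C_*\epsilon, C_*)$, the hypothesis $\|u\|_{C^{k,\alpha}(M)}, \|v\|_{C^{k,\alpha}(M)} \le C_*\epsilon$ combined with Lemma \ref{Q-Mapping} yields
\begin{equation*}
\|\mathcal Q_\epsilon(u) - \mathcal Q_\epsilon(v)\|_{C^{k,\alpha}_\delta(M)}
\le F(C_*\epsilon, C_*\epsilon)\,\|u - v\|_{C^{k,\alpha}_\delta(M)}
\end{equation*}
for $\delta \in \{0,1\}$. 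Again using $F(0,0)=0$, after possibly shrinking $\epsilon_*$ further we may assume $C_{\mathrm{inv}}\,F(C_*\epsilon, C_*\epsilon) \le \tfrac{1}{2}$. Summing the weighted and unweighted estimates gives
\begin{equation*}
\gNorm{\mathcal G_\epsilon(u) - \mathcal G_\epsilon(v)}_X \le \tfrac{1}{2}\gNorm{u-v}_X,
\end{equation*}
which is the required contraction.

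The main obstacle is the interlocking of the two constants: the choice of $C_*$ must be large enough to absorb the $\mathcal N_\epsilon(1)$ contribution after inversion (forcing $C_* \gtrsim C_{\mathrm{inv}}C_{\mathcal N}$), while $\epsilon_*$ must then be chosen small enough—knowing $C_*$—to make both $F(C_*\epsilon,0)$ and $F(C_*\epsilon,C_*\epsilon)$ beat the factor $C_{\mathrm{inv}}$. Once one commits to the order "fix $C_*$ first, then shrink $\epsilon_*$," the argument is routine; the key inputs are the \emph{uniform} (in $\epsilon$) invertibility bound of Proposition \ref{UniformInvertVL} and the continuity at the origin of $F$ provided by Lemma \ref{Q-Mapping}.
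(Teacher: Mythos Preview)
Your argument is correct and follows essentially the same approach as the paper's own proof: invoke the uniform invertibility of $\mathcal L_\epsilon$ together with Lemmas \ref{N1-Estimate} and \ref{Q-Mapping}, fix $C_*$ large enough to absorb the $\mathcal N_\epsilon(1)$ contribution, and then use continuity of $F$ at the origin to shrink $\epsilon_*$ so that both the self-mapping and contraction estimates hold for $\delta\in\{0,1\}$. The only cosmetic difference is that you name the constants explicitly and sum the two norms at the end, whereas the paper leaves them generic.
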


\begin{proof}
Let $u\in X(r_0, r_1)$ and $\delta\in \{0,1\}$.
By the uniform invertibility of $\mathcal L_\epsilon^{-1}$ (cf.~Proposition \ref{UniformInvertVL} and Lemma \ref{kappa-BasicProperties}) we have 
\begin{equation*}
\begin{aligned}
\| \mathcal G_\epsilon(u)\|_{C^{k+2,\alpha}_\delta(M)}
&= \| \mathcal L_\epsilon^{-1} \left( \mathcal N_\epsilon(1) + \mathcal Q_\epsilon(u)\right)\|_{C^{k+2,\alpha}_\delta(M)}
\\
&\leq C \|\mathcal N_\epsilon(1)\|_{C^{k,\alpha}_\delta(M)} 
+ C\|\mathcal Q_\epsilon(u)\|_{C^{k,\alpha}_\delta(M)}.
\end{aligned}
\end{equation*}
Using  Lemma \ref{N1-Estimate} and Lemma \ref{Q-Mapping} we obtain
\begin{gather*}
\| \mathcal G_\epsilon(u)\|_{C^{k+2,\alpha}(M)}
\leq C^\prime \epsilon + C^{\prime\prime} F(r_0, 0)\|u\|_{C^{k+2,\alpha}(M)}
\\
\| \mathcal G_\epsilon(u)\|_{C^{k+2,\alpha}_1(M)}
\leq C^\prime  + C^{\prime\prime} F(r_0, 0)\|u\|_{C^{k+2,\alpha}_1(M)}
\end{gather*}
for some constants $C^\prime,C^{\prime\prime}>0$ independent of $\epsilon$.
Choosing $C_*>2 C^\prime$ and, using the fact that $F(0,0)=0$, choosing $\epsilon_*$ small enough that $C^{\prime\prime}F(C_*\epsilon_*, 0)<1/2$ ensures that 
\begin{equation*}
\mathcal G_\epsilon\colon X(C_* \epsilon,C_*) \to X(C_* \epsilon,C_*)
\end{equation*}
whenever $\epsilon\in (0,\epsilon_*]$.

We furthermore have
\begin{equation*}
\| \mathcal G_\epsilon(u)- \mathcal G_\epsilon(v)\|_{C^{k+2,\alpha}_\delta(M)}
\leq C \| \mathcal Q_\epsilon(u) - \mathcal Q_\epsilon(v)\|_{C^{k,\alpha}_\delta(M)}.
\end{equation*}
Since $F(0,0)=0$, Lemma \ref{Q-Mapping} implies that we can choose $\epsilon_*>0$ such that $\mathcal G_\epsilon$ is a contraction for $\epsilon\in (0,\epsilon_*]$.
\end{proof}

The contraction property of $\mathcal G_\epsilon$, together with the Banach fixed point theorem, immediately leads to following.

\begin{proposition}
\label{PhiConverges}
Let $k\geq 0$ and $\alpha\in (0,1)$.
There exists $\epsilon_*>0$ and constant $C>0$ such that whenever $\epsilon \in (0,\epsilon_*)$ we have $\phi_\epsilon -1\in C^{k,\alpha}_1(M)$ and
\begin{equation*}
\| \phi_\epsilon -1\|_{C^{k,\alpha}(M)} 
\leq C \epsilon.
\end{equation*}
\end{proposition}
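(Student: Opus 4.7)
The plan is to read this off as an immediate corollary of the contraction mapping lemma proved immediately above, together with the uniqueness clause in Proposition \ref{SolveGeneric}\eqref{SolveGenericLich}. Fix $k \geq 0$ and $\alpha \in (0,1)$. Since $\mathcal G_\epsilon$ is a contraction of the complete metric space $X(C_*\epsilon, C_*)$ into itself (with respect to $\gNorm{\cdot}_X$) for all $\epsilon \in (0, \epsilon_*]$, the Banach fixed point theorem will produce a unique $u_\epsilon \in X(C_*\epsilon, C_*)$ satisfying $u_\epsilon = \mathcal G_\epsilon(u_\epsilon)$, and by construction this is equivalent to $\mathcal L_\epsilon(u_\epsilon) + \mathcal N_\epsilon(1) + \mathcal Q_\epsilon(u_\epsilon) = 0$, which by the defining relation $\mathcal Q_\epsilon(u) = \mathcal N_\epsilon(1+u) - \mathcal N_\epsilon(1) - \mathcal L_\epsilon(u)$ unpacks to $\mathcal N_\epsilon(1 + u_\epsilon) = 0$.

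Next I would verify that $1 + u_\epsilon$ satisfies the hypotheses of the uniqueness assertion in Proposition \ref{SolveGeneric}\eqref{SolveGenericLich}. Since $u_\epsilon \in C^{k+2,\alpha}_1(M)$ with $\|u_\epsilon\|_{C^{k+2,\alpha}(M)} \leq C_*\epsilon$, shrinking $\epsilon_*$ if necessary so that $C_*\epsilon_* < \tfrac{1}{2}$ will ensure positivity of $1 + u_\epsilon$. The inclusion $C^{k+2,\alpha}_1(M) \subset C^0_\delta(M)$ (with $\delta = 1 > 0$) gives $u_\epsilon \to 0$ at $\partial M$, so $(1 + u_\epsilon)|_{\partial M} = 1$. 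Regularity of the free data and the ellipticity theory invoked earlier give $1 + u_\epsilon \in C^2_\phg(\bar M)$. The uniqueness clause then forces $\phi_\epsilon = 1 + u_\epsilon$, from which the estimate $\|\phi_\epsilon - 1\|_{C^{k+2,\alpha}(M)} \leq C_*\epsilon$ is immediate.

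The final step is to upgrade this to arbitrary $k$ and $\alpha$. Because $\phi_\epsilon$ is characterized independently of $k,\alpha$ by the uniqueness in Proposition \ref{SolveGeneric}\eqref{SolveGenericLich}, and because the contraction argument applies at every regularity level, the fixed points obtained at different choices of $(k,\alpha)$ must coincide with the same function $\phi_\epsilon - 1$; the threshold $\epsilon_*$ and the constant $C$ may depend on $(k,\alpha)$, but this is exactly what the statement permits.

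I do not anticipate any real obstacle: all the heavy lifting (uniform invertibility of $\mathcal L_\epsilon$ via Proposition \ref{UniformInvertVL}, the estimate on $\mathcal N_\epsilon(1)$, and the Lipschitz bounds on $\mathcal Q_\epsilon$) has been front-loaded into the preceding lemmas. The only points requiring a little care are (i) verifying that the fixed point truly lies in the domain of the Lichnerowicz equation with the correct boundary value, and (ii) reconciling the fixed point from the contraction argument (which a priori depends on $k,\alpha$) with the single function $\phi_\epsilon$ provided by Proposition \ref{SolveGeneric}; both are settled by the uniqueness clause in that proposition.
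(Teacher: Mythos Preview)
Your proposal is correct and follows essentially the same approach as the paper, which simply states that the proposition follows immediately from the contraction property of $\mathcal G_\epsilon$ together with the Banach fixed point theorem. You have supplied more detail than the paper does---in particular the identification of the fixed point with $\phi_\epsilon - 1$ via the uniqueness clause of Proposition~\ref{SolveGeneric}\eqref{SolveGenericLich}---but this is exactly the implicit reasoning the paper is invoking.
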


We now analyze the regularity on $\bar M$ of solutions $\phi_\epsilon$ of  \eqref{LichEpsilon}. 
We do so by writing \eqref{LichEpsilon} in terms of the auxiliary variable
$$u=\phi_\epsilon - u_0, 
\quad\text{ where }\quad
 u_0=1-\frac{1}{24}\rho^2 \R[\bar h].$$
This particular change of variable is motivated by the fact that, while Lemma \ref{N1-Estimate} shows that the function $1$ is an approximate solution to \eqref{LichEpsilon},  the function $u_0$ constitutes a better approximate solution. 
We make this precise in the following lemma. 

\begin{lemma}
\label{N2-Estimate} Let $k\geq 0$ and $\alpha\in (0,1)$.
For each sufficiently small $\epsilon>0$ we have $\mathcal N_\epsilon(u_0)\in \rho^4C^{\infty}(\bar M)$.
\end{lemma}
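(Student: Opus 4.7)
The plan is to exploit the special product structure of $\bar h$ near the boundary. Since $\chi_\epsilon = 0$ on the collar $\{\rho \leq \epsilon\}$, in this region $\bar\lambda_\epsilon = \bar h = \D\rho\otimes \D\rho + \bar b$, $\mu_\epsilon = 0$, and $\sigma_\epsilon = \mathcal D_{\lambda_\epsilon} W_\epsilon$. The product structure gives $|\D\rho|^2_{\bar h} = 1$, $\Delta_{\bar h}\rho = 0$, and $R[\bar h] = R[\bar b]$ independent of $\rho$; in particular $\partial_\rho R[\bar h] = 0$ in the collar. Outside this collar, $\mathcal N_\epsilon(u_0)$ is a smooth function on the compact set $\{\rho \geq \epsilon/2\} \subset M$, on which $\rho$ is bounded below, so its inclusion in $\rho^4 C^\infty(\bar M)$ is automatic. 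Hence all the work reduces to the collar.

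On the collar, the conformal change formula in dimension three gives $\Delta_{\lambda_\epsilon} f = \rho^2 \Delta_{\bar h} f - \rho\partial_\rho f$; applied to $u_0 = 1 - \tfrac{1}{24}\rho^2 R[\bar h]$, together with $\partial_\rho R[\bar h]=0$ and $\Delta_{\bar h}(\rho^2) = 2$, this yields
\begin{equation*}
\Delta_{\lambda_\epsilon} u_0 = -\tfrac{1}{24}\rho^4 \Delta_{\bar b} R[\bar h] \in \rho^4 C^\infty(\bar M),
\end{equation*}
the $O(\rho^2)$ contributions from $\rho^2\Delta_{\bar h}u_0$ and $\rho\partial_\rho u_0$ cancelling precisely because of the coefficient $-\tfrac{1}{24}$. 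Next, \eqref{CompactifyR} gives $R[\lambda_\epsilon] = -6 + \rho^2 R[\bar h]$ in the collar; expanding $-\tfrac{1}{8}R[\lambda_\epsilon]u_0 - \tfrac{3}{4}u_0^5$ by the binomial theorem and collecting powers of $\rho^2 R[\bar h]$, the $O(1)$ contribution vanishes ($\tfrac{3}{4}-\tfrac{3}{4}=0$) and the $O(\rho^2)$ contribution also vanishes (again by the choice of $-\tfrac{1}{24}$), leaving a polynomial remainder in $\rho^2 R[\bar h]$ starting at order $\rho^4$, which lies in $\rho^4 C^\infty(\bar M)$.

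Finally, the $\tfrac{1}{8}|\sigma_\epsilon|^2_{\lambda_\epsilon} u_0^{-7}$ term is handled using Proposition \ref{VL-NoLogs}: $\sigma_\epsilon$ extends smoothly to $\bar M$, and because it is a covariant $2$-tensor (weight $2$) we have $|\sigma_\epsilon|^2_{\lambda_\epsilon} = \rho^4 |\sigma_\epsilon|^2_{\bar\lambda_\epsilon} \in \rho^4 C^\infty(\bar M)$, while $u_0^{-7}$ is smooth and bounded near $\rho=0$ (since $u_0 = 1 + O(\rho^2)$). Summing the three contributions proves $\mathcal N_\epsilon(u_0) \in \rho^4 C^\infty(\bar M)$. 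The only real obstacle is the $O(\rho^2)$ bookkeeping, but these cancellations are designed into the form of $u_0$, which is nothing more than the first two terms of a formal series solution to $\mathcal N_\epsilon(\phi)=0$ with boundary value $1$; so the computation is routine once one sees where to look.
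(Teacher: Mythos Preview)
Your argument is correct and follows essentially the same route as the paper's proof: reduce to the collar where $\bar\lambda_\epsilon=\bar h$ is a product, use the conformal Laplacian formula together with $\partial_\rho R[\bar h]=0$ and $\Delta_{\bar h}\rho=0$ to see $\Delta_{\lambda_\epsilon}u_0\in\rho^4 C^\infty(\bar M)$, verify the algebraic $O(1)$ and $O(\rho^2)$ cancellations in $-\tfrac18 R[\lambda_\epsilon]u_0-\tfrac34 u_0^5$, and handle the $|\sigma_\epsilon|^2_{\lambda_\epsilon}$ term via the conformal-weight identity $|\sigma_\epsilon|^2_{\lambda_\epsilon}=\rho^4|\sigma_\epsilon|^2_{\bar\lambda_\epsilon}$ and Proposition~\ref{VL-NoLogs}. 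The only cosmetic differences are that the paper groups the algebraic terms slightly differently and writes $\Delta_{\bar h}R[\bar h]$ where you write $\Delta_{\bar b}R[\bar h]$ (these agree since $R[\bar h]$ is $\rho$-independent in the collar).
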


\begin{proof}
It suffices to perform the computation in the collar neighborhood of the boundary where $\lambda_\epsilon= h$. 
There we have $\R[h]=-6+\rho^2\R[\bar h]$ and $\langle \D\rho, \D\R[\bar h]\rangle_{\bar h}=0$; 
the latter can be seen as a consequence of the fact that $\grad_{\bar h}\rho$ is a Killing vector field in the collar neighborhood of $\partial M$. 
A direct computation now shows that 
$$\Delta_h u_0=\rho^2\Delta_{\bar h}u_0-\rho\langle \D\rho, \D u_0\rangle_{\bar h}=\rho^4\Delta_{\bar h}\R[\bar h]\in \rho^4 C^\infty(\bar M).$$
On the other hand, Propositions \ref{SolveGeneric} and \ref {VL-NoLogs} imply $|\sigma_\epsilon|^2_{\lambda_\epsilon} \in \rho^4 C^\infty(\bar M)$. Using this fact we obtain
\begin{multline*}
\frac{1}{8}\R[h]u_0-\frac{1}{8}|\sigma_\epsilon|^2_{h}u_0^{-7}+\frac{3}{4}u_0^5
\\
=\frac{1}{8}\left(-6+\frac{5}{4}\rho^2\R[\bar h]\right)+\frac{3}{4}\left(1-\frac{5}{24}\rho^2\R[\bar h]\right)+\rho^4 C^\infty(\bar M)\in \rho^4 C^\infty(\bar M).
\end{multline*}
This completes the proof.
\end{proof}

\begin{proposition}\label{LichNoLogs}
The solution $\phi_\epsilon$ of \eqref{LichEpsilon} extends to a smooth function on $\bar M$. 
\end{proposition}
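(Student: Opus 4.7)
The plan is to apply Proposition \ref{prop:SemilinearSmoothness} to the function $u = \phi_\epsilon - u_0$ with operator $\mathcal P = \Delta_{\lambda_\epsilon} - 3$. Writing $\Phi(\phi) := \frac{1}{8}R[\lambda_\epsilon]\phi - \frac{1}{8}|\sigma_\epsilon|^2_{\lambda_\epsilon}\phi^{-7} + \frac{3}{4}\phi^5$, so that $\mathcal N_\epsilon(\phi) = \Delta_{\lambda_\epsilon}\phi - \Phi(\phi)$, Taylor expanding $\Phi$ around $u_0$ converts $\mathcal N_\epsilon(\phi_\epsilon) = 0$ into
\begin{equation*}
\mathcal P u = -\mathcal N_\epsilon(u_0) + (\Phi'(u_0) - 3) u + \sum_{l \geq 2}\frac{1}{l!}\Phi^{(l)}(u_0) u^l =: f(u).
\end{equation*}

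The first step is to verify the assumptions of Proposition \ref{prop:SemilinearSmoothness}. The operator $\mathcal P$ is self-adjoint, elliptic, and geometric; its indicial polynomial is $p(s) = s^2 - 2s - 3 = (s-3)(s+1)$, so Assumption \ref{Assume-L} is met with highest characteristic exponent $\mu = 3$. For $f$, Lemma \ref{N2-Estimate} gives $a_0 = -\mathcal N_\epsilon(u_0) \in \rho^4 C^\infty(\bar M) \subset \rho C^\infty(\bar M)$. Since $u_0|_{\partial M} = 1$, $R[\lambda_\epsilon]|_{\partial M} = -6$, and $|\sigma_\epsilon|^2_{\lambda_\epsilon}|_{\partial M} = 0$, direct evaluation of $a_1 = \Phi'(u_0) - 3 = \frac{1}{8}R[\lambda_\epsilon] + \frac{7}{8}|\sigma_\epsilon|^2_{\lambda_\epsilon}u_0^{-8} + \frac{15}{4}u_0^4 - 3$ at $\partial M$ yields zero; smoothness of $\lambda_\epsilon$, $u_0$, and (by Proposition \ref{VL-NoLogs}) $|\sigma_\epsilon|^2_{\lambda_\epsilon}$ on $\bar M$, together with positivity of $u_0$ near $\partial M$, gives $a_1 \in \rho C^\infty(\bar M)$ and $a_l = \frac{1}{l!}\Phi^{(l)}(u_0) \in C^\infty(\bar M)$ for $l \geq 2$, so Assumption \ref{Assume-F} holds.

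The principal obstacle is to establish $u \in \mathcal B^\phg_3(M)$. Proposition \ref{SolveGeneric}(b) gives $\phi_\epsilon \in C^2_\phg(\bar M)$ with $\phi_\epsilon - 1 = O(\rho^2)$, so $u$ is polyhomogeneous with leading exponent at least $2$ a priori. I would promote this to exponent $3$ by exploiting the fact that $s = 2$ is not a characteristic exponent of $\mathcal P$, so no $\rho^2 \log \rho$ resonance arises and the $\rho^2$ coefficient of $\phi_\epsilon - 1$ on $\partial M$ is uniquely determined by the indicial equation. Using $R[\lambda_\epsilon] + 6 = \rho^2 R[\bar h]$ in the collar (where $\bar\lambda_\epsilon = \bar h = \D\rho \otimes \D\rho + \bar b$) and $|\sigma_\epsilon|^2_{\lambda_\epsilon} \in \rho^4 C^\infty(\bar M)$ gives $\Phi(1) = \frac{1}{8}\rho^2 R[\bar h] + O(\rho^4)$; the indicial equation $I_2(\mathcal P)\bar v_2 = -3 \bar v_2 = \frac{1}{8}R[\bar h]|_{\partial M}$ then forces $\bar v_2 = -\frac{1}{24}R[\bar h]|_{\partial M}$, which exactly cancels the $\rho^2$ contribution of $u_0 - 1 = -\frac{1}{24}\rho^2 R[\bar h]$ on $\partial M$. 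Hence the leading exponent of $u = \phi_\epsilon - u_0$ is at least $3$.

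With $u \in \mathcal B^\phg_3(M)$, the decay estimates $a_0 \in \mathcal B_4$, $a_1 u \in \mathcal B_4$, and $a_l u^l \in \mathcal B_{3l} \subset \mathcal B_4$ for $l \geq 2$ give $f(u) \in \mathcal B_4$, so $\delta = 4 > 3 = \mu$. Applying Proposition \ref{prop:SemilinearSmoothness} yields $u \in C^\infty(\bar M)$, whence $\phi_\epsilon = u_0 + u \in C^\infty(\bar M)$. The indicial cancellation above is precisely what motivates the introduction of $u_0$ in Lemma \ref{N2-Estimate}: with the naive approximate solution $1$ in its place, the residual $\rho^2$ term in $\phi_\epsilon - 1$ would fall below the threshold $\mu = 3$ required by Proposition \ref{prop:SemilinearSmoothness}.
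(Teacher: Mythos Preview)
Your overall strategy coincides with the paper's: set $u=\phi_\epsilon-u_0$, rewrite \eqref{LichEpsilon} as $(\Delta_{\lambda_\epsilon}-3)u=f(u)$, verify Assumptions \ref{Assume-L} and \ref{Assume-F}, and invoke Proposition \ref{prop:SemilinearSmoothness}. The indicial polynomial, the conclusion $\mu=3$, and the final decay count $f(u)\in\mathcal B_4$ are all as in the paper.

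The one substantive difference is how you reach $u\in\mathcal B^\phg_3(M)$. The paper does not compute the $\rho^2$-coefficient of $\phi_\epsilon-1$; instead it observes that $u\in\rho^2 C^0_\phg(\bar M)$ already forces $f(u)\in C^{k,\alpha}_4(M)$ (here the paper records the slightly sharper $a_1\in\rho^2 C^\infty(\bar M)$, visible from $R[\lambda_\epsilon]+6=\rho^2 R[\bar h]$ and $u_0^4-1\in\rho^2 C^\infty$ in the collar), and then applies the regularity clause of Proposition \ref{MappingProperties}\eqref{Prop4b} to bootstrap $u\in C^k_\delta(M)$ for all $\delta<3$, hence $u\in\mathcal B^\phg_3(M)$.

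Your indicial computation is correct and gives a transparent explanation for why $u_0$ carries exactly the $-\tfrac{1}{24}\rho^2 R[\bar h]$ correction. As written, though, it leaves a small gap: matching the expansion of $\phi_\epsilon-1$ at $\Re(s)=2$ and ruling out $\rho^2\log\rho$ does not by itself exclude terms with $2<\Re(s)<3$, so ``leading exponent of $u$ is at least $3$'' does not follow immediately. You can close this either by iterating the indicial argument across $(2,3)$ (noting $I_s(\mathcal P)=(s-3)(s+1)$ is invertible there and that neither $\mathcal R v$ nor the nonlinear terms contribute at those levels), or more economically by the paper's one-line bootstrap via Proposition \ref{MappingProperties}\eqref{Prop4b}. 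Either route completes the proof.
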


\begin{proof}
We see from Lemma \ref{N2-Estimate} that 
$$\Delta_hu=\frac{1}{8}\R[h]u-\frac{1}{8}|\sigma_\epsilon|^2_h\left((u_0+u)^{-7}-u_0^{-7}\right)+\frac{3}{4}\left((u_0+u)^5-u_0^5\right)+\rho^4C^\infty(\bar M).$$
Since $\frac{1}{8}\R[h]=-\frac{3}{4}+\rho^2C^\infty(\bar M)$, $|\sigma_\epsilon|^2_{\lambda_\epsilon} \in \rho^4 C^\infty(\bar M)$, and $\frac{15}{4}u_0^4=\frac{15}{4}+\rho^2C^\infty(\bar M)$, we have
\begin{equation}
\label{NewLichU}
\Delta_{\lambda_\epsilon}u - 3 u = f(u),
\end{equation}
where near $\rho=0$ the function $f(u)$ has the uniformly and absolutely convergent power series
\begin{equation*}
f(u) = \sum_{l=0}^\infty a_l u^l
\end{equation*}
with $a_0 \in \rho^4 C^\infty(\bar M)$, $a_1\in \rho^2 C^\infty(\bar M)$, and $a_l \in C^\infty(\bar M)$ for $l\geq 2$. In particular, $f$ satisfies Assumption \ref{Assume-F}. Also note that, by Proposition \ref{SolveGeneric}\eqref{SolveGenericLich}, $u\in \rho^2C^0_\phg(\bar M)$; consequently $f(u)\in C^{k,\alpha}_4$ for all $k\geq 0$ and $\alpha\in(0,1)$. Applying Proposition \ref{MappingProperties} now yields $u\in C^k_\delta(M)$ for all $k\geq 0$ and $\delta<3$.

Finally, we observe that the indicial map of $\Delta_{\lambda_\epsilon} -3$ is
$$ I_s(\Delta_{\lambda_\epsilon} -3) = (s-3)(s+1).$$
In particular, $\Delta_{\lambda_\epsilon}-3$ satisfies Assumption \ref{Assume-L} with the highest characteristic exponent of $\mu=3$.  Invoking Proposition \ref{prop:SemilinearSmoothness} we conclude that $u$ and $\phi_\epsilon$ extend to functions in $C^\infty(\bar M)$.
\end{proof}

\subsection{The proof of Theorem \ref{Density}}
We now construct the approximating initial data and show that they satisfy the shear-free condition, are smoothly conformally compact, and have the desired convergence property.

The solutions $W_\epsilon$ to \eqref{VLepsilon} and $\phi_\epsilon$ to \eqref{LichEpsilon} give rise to initial data sets $(g_\epsilon, K_\epsilon)$ determined by 
\begin{equation}
\label{BuildApproximates}
\begin{aligned}
g_\epsilon &= \phi_\epsilon^4 \lambda_\epsilon
\\
K_\epsilon &= \Sigma_\epsilon - g_\epsilon = \phi_\epsilon^{-2} \sigma_\epsilon - \phi_\epsilon^4\lambda_\epsilon.
\end{aligned}
\end{equation}
By Propositions \ref{VL-NoLogs} and \ref{LichNoLogs} we see that $\bar g_\epsilon=\rho^2g_\epsilon$ and $\bar \Sigma_\epsilon=\rho(K_\epsilon+g_\epsilon)$ extend smoothly to $\bar M$.

To see that $(g_\epsilon, K_\epsilon)$ is shear-free note that  Lemma \ref{BoundaryH}, Proposition \ref{B-BasicProperties}, and the fact that $\phi_\epsilon =1$ along $\partial M$ imply 
\begin{equation*}
\left.\B_{\bar g_\epsilon}(\rho)\right|_{\partial M} =0.
\end{equation*}
In addition, we have
\begin{equation*}
\left.\bar\Sigma_{\epsilon}\right|_{\partial M}
= \left.\rho\sigma_\epsilon\right|_{\partial M}
=\left.\rho \left(\mu_\epsilon  + \mathcal D_{\lambda_\epsilon}W_\epsilon\right)\right|_{\partial M}.
\end{equation*}
By definition, $\mu_\epsilon$ vanishes along $\partial M$. 
Furthermore, Proposition \ref{SolveGeneric} implies that $\mathcal D_{\lambda_\epsilon}W_\epsilon \in C^0_\phg(\bar M)$, and thus we see that $\bar\Sigma_{\epsilon}$ vanishes along $\partial M$.
Consequently, the approximating family of initial data $(g_\epsilon, K_\epsilon)$ satisfies the shear-free condition.

Finally, we prove the following convergence property.
\begin{proposition}
Let $k\geq 0$ and $\alpha\in (0,1)$. Then 
$$\|g_\epsilon-g\|_{C^{k,\alpha}(M)}\leq C\epsilon, \ \ \|K_\epsilon-K\|_{C^{k,\alpha}(M)}\leq C\epsilon$$
for some constant $C$ independent of $\epsilon$.
\end{proposition}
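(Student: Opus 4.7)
The plan is to reduce the convergence statement to the uniform bounds already in hand---Lemma \ref{lemma:IntrinsicMetricEstimate} for $\lambda_\epsilon - g$, Lemma \ref{EstimateMuEpsilon} for $\mu_\epsilon - \Sigma$, Lemma \ref{EstimateWepsilon} for $\mathcal D_{\lambda_\epsilon} W_\epsilon$, and Proposition \ref{PhiConverges} for $\phi_\epsilon - 1$---and then combine them using the algebra property of $C^{k,\alpha}(M) = C^{k,\alpha}_0(M)$.

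First I would decompose each difference so that exactly one factor in each summand carries the $O(\epsilon)$ smallness. Using \eqref{BuildApproximates} and the identity $K = \Sigma - g$,
\begin{equation*}
g_\epsilon - g = (\phi_\epsilon^4 - 1)\lambda_\epsilon + (\lambda_\epsilon - g),
\end{equation*}
\begin{equation*}
K_\epsilon - K = (\phi_\epsilon^{-2} - 1)\mu_\epsilon + (\mu_\epsilon - \Sigma) + \phi_\epsilon^{-2}\,\mathcal D_{\lambda_\epsilon}W_\epsilon - (g_\epsilon - g).
\end{equation*}
The algebraic factors $\phi_\epsilon^4 - 1$ and $\phi_\epsilon^{-2} - 1$ are converted to $O(\epsilon)$ by the Taylor-type factorizations $\phi_\epsilon^4 - 1 = (\phi_\epsilon - 1)(\phi_\epsilon^3 + \phi_\epsilon^2 + \phi_\epsilon + 1)$ and $\phi_\epsilon^{-2} - 1 = -\phi_\epsilon^{-2}(\phi_\epsilon - 1)(\phi_\epsilon + 1)$, both valid since Proposition \ref{PhiConverges} places $\phi_\epsilon$ uniformly close to $1$ in $C^{k,\alpha}(M)$.

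Second I would check that the remaining (bounded) factors $\lambda_\epsilon$, $\mu_\epsilon$, and $\phi_\epsilon^{\pm 2}$ have uniform-in-$\epsilon$ bounds in $C^{k,\alpha}(M)$, so that the algebra property of this space delivers the desired $O(\epsilon)$ estimate term by term. Uniform boundedness of $\lambda_\epsilon$ and $\mu_\epsilon$ follows from Lemmas \ref{lemma:IntrinsicMetricEstimate} and \ref{EstimateMuEpsilon} together with the observation that $g$ and $\Sigma$ themselves lie in $C^{k,\alpha}(M)$---a consequence of the polyhomogeneous asymptotically hyperbolic hypothesis and the inclusion $C^{k,\alpha}(\bar M) \subseteq C^{k,\alpha}_r(M)$ for tensors of weight $r$. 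No substantive obstacle is expected: this final proposition is bookkeeping on top of the estimates already assembled, and the only care required is tracking that the resulting constants depend on $(k,\alpha)$ but not on $\epsilon$, as stipulated by our convention throughout \S\ref{FreeData}--\S\ref{SolveConstraint}.
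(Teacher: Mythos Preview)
Your proposal is correct and follows essentially the same approach as the paper. The only cosmetic difference is in the algebraic splitting of $g_\epsilon - g$: you write $(\phi_\epsilon^4 - 1)\lambda_\epsilon + (\lambda_\epsilon - g)$ whereas the paper uses $\phi_\epsilon^4(\lambda_\epsilon - g) + (\phi_\epsilon^4 - 1)g$; both place the $\mathcal O(\epsilon)$ factor on one term and a uniformly bounded factor on the other, and both feed into the same lemmas (Lemma~\ref{lemma:IntrinsicMetricEstimate}, Lemma~\ref{EstimateMuEpsilon}, Lemma~\ref{EstimateWepsilon}, Proposition~\ref{PhiConverges}).
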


\begin{proof}
We have
\begin{equation*}
g_\epsilon - g = \phi_\epsilon^4 (\lambda_\epsilon - g) + (\phi_\epsilon^4 -1) g.
\end{equation*}
From Lemma \ref{lemma:IntrinsicMetricEstimate} we see that the $C^{k,\alpha}$ norm of the first term is $\mathcal O(\epsilon)$, while the second term can be estimated using Proposition \ref{PhiConverges}.

Note that $K - K_\epsilon = \Sigma - \Sigma_\epsilon -(g-g_\epsilon)$.
Thus it suffices to estimate
\begin{equation*}
\Sigma - \Sigma_\epsilon
= \Sigma - \phi_\epsilon^{-2}(\mu_\epsilon + \mathcal D_{\lambda_\epsilon}W_\epsilon).
\end{equation*}
Due to Lemma \ref{EstimateWepsilon} and Proposition \ref{PhiConverges}, it suffices to estimate $\Sigma-\mu_\epsilon$.
This, however, is accomplished in Lemma \ref{EstimateMuEpsilon}.
\end{proof}

\bibliographystyle{plain}
\bibliography{ShearFreeDensity}

\end{document}